%%modified on May 27 2015

\documentclass[11pt]{article}
\usepackage{diagrams}
\usepackage{amsmath}
\usepackage{amssymb,color,mathtools}
\usepackage{mathrsfs}
\usepackage{amscd}
\usepackage{amsthm}
\usepackage[matrix,frame,import,curve]{xypic}
\usepackage{dirtytalk}

\setlength{\textwidth}{16cm}
\setlength{\textheight}{22cm}
\addtolength{\oddsidemargin}{-15mm}
\addtolength{\evensidemargin}{-15mm}
\addtolength{\topmargin}{-15mm}

\newtheorem{lemma}{Lemma}[section]
\newtheorem{theorem}[lemma]{Theorem}
\newtheorem{corollary}[lemma]{Corollary}

\newtheorem{prop}[lemma]{Proposition}
\theoremstyle{definition}
\newtheorem{definition}[lemma]{Definition}
\newtheorem{example}[lemma]{Example}

\newtheorem{remark}[lemma]{Remark}

\theoremstyle{remark}
\newtheorem*{proof*}{Proof}
\numberwithin{equation}{section}

\renewcommand{\th}{\theta}

\def\ep{{\epsilon}}

\renewcommand{\k}{\mathbf{k}}

\newcommand{\La}{\Lambda}
\newcommand{\sub}{\subset}

\newcommand{\De}{\Delta}

\def\Spec{{\bf {Spec}}}
\def\RHom{{\mathbf{R}\rm{Hom}}}

\def\D{\mathrm{D}}

\def\Ext{{\mathrm{Ext}}}

\def\Hom{{\mathrm{Hom}}}

\def\End{{\mathrm{End}}}

\def\perf{{\mathrm{Perf}}}
\def\Spec{{\mathrm{Spec\ }}}
\def\deg{{\mathrm{deg}}}
\def\dim{{\mathrm{dim}}}

\def\Res{{\mathrm{Res}}}

\def\rk{{\mathrm{rk}}}
\newcommand{\om}{\omega}
\newcommand{\OO}{{\mathcal O}}
\newcommand{\MM}{{\mathcal M}}
\newcommand{\id}{{\operatorname{id}}}

\def\ot{{\otimes}}

\newcommand{\und}{\underline}

\def\PP{{\mathbb P}}

\def\ZZ{{\mathbb Z}}
\def\CC{{\mathbb C}}

\def\NN{{\mathbb N}}
\def\RR{{\mathbb R}}
\def\HH{{\mathbb H}}
\def\AA{{\mathbb A}}
\def\GG{{\mathbb G}}

\newcommand{\we}{\wedge}
\def\cF{{\cal{F}}}
\newcommand{\FF}{{\mathcal F}}
\def\cE{{\cal{E}}}
\def\cO{{\cal{O}}}
\def\cC{{\cal{C}}}

\def\cM{{\cal{M}}}

\def\cA{{\cal{A}}}

\def\cV{{\cal{V}}}

\def\cD{{\cal{D}}}

\def\sX{{\mathscr{X}}}
\def\sY{{\mathscr{Y}}}

\newcommand{\pa}{\partial}
\newcommand{\la}{\lambda}

\newcommand{\GL}{{\operatorname{GL}}}
\newcommand{\Aut}{{\operatorname{Aut}}}

\newcommand{\lan}{\langle}
\newcommand{\ran}{\rangle}

\def\fm{{\mathfrak{m}}}

\def\Res{{\mathrm{Res}}}
\def\gr{{\rm{gr}}}

\def\p{{\prime}}

\def\ul{\underline}
\def\wt{\widetilde}

\def\Map{{\bf{Map}}}
\def\Vt{\underline{Vect}}
\def\dVt{\RR\underline{Vect}}

\def\Qcoh{\mathrm{Qcoh}}
\def\dPer{\RR{\underline{Perf}}}
\def\dePer{\RR\ep{\underline{Perf}}}
\def\dCx{\RR{\underline{Cplx}}}
\def\bCx{{\bf{Cplx}}}
\def\cdga{\mathbf{cdga}}

\newcommand{\de}{\delta}
\newcommand{\sspan}{\operatorname{span}}
\newcommand{\VV}{{\mathcal V}}

\title{Elliptic bihamiltonian structures from relative shifted Poisson structures}
\author{Zheng Hua, Alexander Polishchuk}\date{}
%\address{Department of Mathematics, the University of Hong Kong, Hong Kong SAR, China}
%\email{huazheng@maths.hku.hk}
%\author{Alexander Polishchuk}
%\address{University of Oregon, Korea Institute for Advanced Study and National Research University Higher School of Economics, Russian Federation}
%\email{apolish@uoregon.edu}

\begin{document}
\maketitle

\begin{abstract}
In this paper, generalizing the construction of \cite{HP1}, we equip the relative moduli stack of complexes over a Calabi-Yau fibration
(possibly with singular fibers) with a shifted Poisson structure. Applying this construction to the anticanonical
linear systems on surfaces, we get examples of compatible Poisson brackets on projective spaces extending Feigin-Odesskii Poisson brackets. 
Computing explicitly the corresponding compatible brackets coming from Hirzebruch surfaces, we recover the brackets defined by Odesskii-Wolf in 
\cite{OW}.
\end{abstract}

%\newpage

\section{Introduction}

Recall that a {\it bihamiltonian structure} is a pair of (linearly independent) Poisson bivectors
$\Pi_1,\Pi_2$ which are {\it compatible}, i.e., such that any linear combination of $\Pi_1$ and $\Pi_2$
is again Poisson. A fundamental result of Magri relates bihamiltonian structures to complete integrability \cite{Magri}.  

The main goal of this paper is to try to understand the geometry underlying bihamiltonian structures
extending the elliptic Feigin-Odesskii Poisson brackets. Recall that the latter are certain Poisson brackets $q_{n,k}(C)$ on the projective
space $\PP^{n-1}$ associated with an elliptic curve $C$ and a pair of relatively prime integers $n>k>0$ 
(see Sec.\ \ref{FO-br-sec}). These brackets were introduced by Feigin and Odesskii in \cite{FO95} and are supposed to arise as semiclassical limits
from Feigin-Odesskii elliptic algebras introduced in \cite{FO87} (for $k=1$ this is proved in \cite[Sec.\ 5.2]{HP1}).
Recently interesting examples of such bihamiltonian structures were constructed by
Odesskii-Wolf in \cite{OW} (improving earlier construction of Odesskii in \cite{Odesskii}): 
for every $n>2$ they constructed a $9$-dimensional subspace of compatible Poisson brackets on $\PP^{n-1}$
containing $q_{n,1}(C)$. Our results give a more conceptual construction of these compatible brackets, as well as some
generalizations involving $q_{n,k}(C)$ with $k>1$.

The main idea is to use the general setup of shifted Poisson structures on (derived) moduli stacks of complexes of vector bundles
over Calabi-Yau varieties considered in \cite{HP1}. In \cite{HP1} we showed that Feigin-Odesskii brackets appear in
this setup as classical shadows of natural $0$-shifted Poisson structures on the moduli stacks of two-term complexes over elliptic curves 
(in fact, this connection goes back to \cite{Pol-Poisson}).
In this paper we extend this setup by allowing the varieties to be singular Gorenstein and by considering a relative version.
More precisely, for a flat family of (possibly singular) $d$-Calabi-Yau varieties with an affine base, 
there is  a $(1-d)$-shifted Poisson structure on the relative stack of complexes (see Theorem \ref{existence-relative}).
We show that in the case of elliptic fibrations $\pi:C\to \PP^n$ such that $\om_{C/S}\simeq \pi^*\OO_{\PP^n}(1)$ this leads
to families of compatible Poisson brackets (see Theorem \ref{proj-base-thm}).

We then proceed to study families of anticanonical divisors on surfaces. We find a general construction starting from an exceptional bundle $\cV$ on a surface $X$,
such that $(\OO_X,\cV)$ is an exceptional pair, and leading to compatible brackets containing Feigin-Odesskii brackets
(see Theorem \ref{exc-pair-thm}). Considering appropriate line bundles on Hirzebruch surfaces we recover the $9$ compatible Poisson brackets of Odesskii-Wolf
containing $q_{n,1}(C)$. Proving that these are actually the same compatible brackets is a nontrivial computation that takes up Section \ref{computations-sec}.
These computations are based on the connection between the Poisson brackets $q_{n,k}(C)$ and certain Massey products. We calculate the relevant Massey products
using Szeg\"o kernels. 

We also discover some new examples of compatible Poisson brackets. Namely, we construct two infinite families of pairs $(n,k)$ for which each Feigin-Odesskii
bracket $q_{n,k}(C)$ is contained in a $10$-dimensional family of compatible Poisson brackets, namely, the pairs 
$$(3f_{2m-1},f_{2m-3}) \ \text{ for } m\ge 2, \ \text{ and } \ (3f_{2m-1},3f_{2m-1}-f_{2m-3}) \ \text{ for } m\ge 3,$$
where $(f_n)$ are Fibonacci numbers (see Proposition \ref{Fibonacci-prop}). For example, this gives a $10$-dimensional subspace of compatible Poisson brackets
on $\PP^5$ containing $q_{6,1}(C)$, which is a bit surprising given that the $9$-dimensional space of compatible brackets of Odesskii-Wolf on $\PP^5$ is maximal, i.e., 
is not contained in a bigger such space. This leads to a natural question how these two spaces are related.

Another new example we discover is that for every $n>k>1$ such that $n\equiv \pm 1 \mod (k)$, with odd $k$,
there exists a bihamiltonian structure on $\PP^{n-1}$ containing $q_{n,k}(C)$ (see Proposition \ref{biham5-prop}). 
In fact, in this example we get $5$ compatible brackets but we don't know how to prove their linear independence.

The natural question is whether for every relatively prime pair $(n,k)$ with $n>k+1$, the Feigin-Odesskii
bracket $q_{n,k}(C)$ extends to a bihamiltonian structure. We believe that our construction using exceptional bundles on surfaces 
in Theorem \ref{exc-pair-thm} should at least provide more examples of such pairs (if not all of them).

It is an interesting question whether bihamiltonian structures containing Feigin-Odesskii brackets lead to any interesting integrable systems.
We plan to address this question in a future work.

The paper is organized as follows. In Section \ref{FO-br-sec} we study Feigin-Odesskii Poisson brackets $q_{n,k}(C)$.
The first result here is the formula for the bracket in terms of a triple Massey product (see Lemma \ref{Poisson-Massey-lem}).
The second result of Section \ref{FO-br-sec}, which may be of independent interest, 
is that the isomorphism class of an elliptic curve $C$ can be recovered from $q_{n,k}(C)$
provided $n>k+1$ (see Theorem \ref{reconstr-thm}). We prove this by studying the locus where the rank of the Poisson bivector drops compare to 
the generic rank. In Section \ref{singular-source-sec} we generlize the construction of a shifted Poisson structure on the moduli of complexes
over a smooth Calabi-Yau variety from \cite{HP1} to the case of families of not necessarily smooth Calabi-Yau varieties (see Theorem \ref{existence-relative}).
In Section \ref{CY-curves-sec} we specialize to families of CY-curves. Considering a relative version of Feigin-Odesskii
Poisson brackets, under appropriate assumptions we get collections of compatible Poisson brackets on projective
spaces (see Theorem \ref{proj-base-thm}). We then show that compatible Poisson brackets
arise from the linear system of anticanonical divisors in a smooth projective surface $X$ and an exceptional pair $(\OO_X,\VV)$ (see Theorem
\ref{exc-pair-thm}). We consider examples corresponding to such exceptional pairs on some del Pezzo surfaces and Hirzebruch surfaces.
Finally, in Section \ref{computations-sec} we show how to compute our Poisson brackets in terms of Szeg\"o kernels and deduce
that our construction, applied to exceptional pairs on Hirzebruch surfaces, recovers the compatible Poisson brackets of Odesskii-Wolf in \cite{OW}.

\bigskip

\noindent
{\it Acknowledgments}. We are grateful to the anonymous referees for useful comments and suggestions. Parts of this work were done during the visit of the first author in SUSTech. He would like to thank Prof. Wang Xiaoming for his hospitality.
Z.H. is partially supported by the GRF grant no. 17308818 and no. 17308017 of University Grants Committee of Hong Kong SAR, China. A.P. is partially supported by the NSF grant DMS-2001224, 
%by the National Center of Competence in Research ``SwissMAP --- The Mathematics of Physics'' of the Swiss National Science Foundation, 
and within the framework of the HSE University Basic Research Program and by the Russian Academic Excellence Project `5-100'.

\section{Feigin-Odesskii brackets}\label{FO-br-sec}

In this section we discuss some aspects of the Poisson brackets $q_{n,k}(C)$ on projective spaces defined by Feigin-Odesskii.
We use the modular definition of these brackets obtained by studying vector bundle extensions of a fixed stable vector bundle $\xi$ on $C$
by $\OO_C$.

\subsection{Formula for the Poisson bracket as a Massey product}

We start by giving the definition of the Feigin-Odesskii bracket $q_{n,k}(C)$ on the projective space
$\PP\Ext^1(\xi,\OO)=\PP H^1(C,\xi^\vee)$ following \cite[Sec.\ 5.2]{HP1}. 
Let $\xi$ be a stable vector bundle on an elliptic curve $C$ of degree $n>0$ and rank $k$. Let us fix a trivialization $\om_C\simeq \OO_C$.
The construction will depend on $\xi$ and a trivialization of $\om_C$, however, up to an isomorphism and rescaling, the bracket depends
only on $n,k$ and $C$.

Given a nonzero $\phi\in H^1(C,\xi^\vee)\simeq H^0(C,\xi)^*$, the tangent space to the projective space
is given by $H^1(C,\xi^\vee)/\lan\phi\ran$, while the cotangent space is 
$$\lan\phi\ran^\perp:=\ker\Big(H^0(C,\xi)\rTo{\phi}H^1(C,\cO_C)\Big).$$
Let 
$$0\to \OO_C\to E\to \xi\to 0$$
be the extension corresponding to $\phi$.
Let $\und{\End}(E,\OO_C)$ be the bundle of endomorphisms of $E$ preserving $\OO_C$.
It sits in a natural exact sequence
$$0\to \und{\End}(E,\OO_C)\to \und{\End}(E)\to \xi\to 0,$$
so by applying the functor $R\Hom(?,\OO_C)$, we get a boundary homomorphism
$$\de:\Hom(\und{\End}(E,\OO_C),\OO_C)\to\Ext^1(\xi,\OO_C)=H^1(C,\xi^\vee).$$
On the other hand, the exact sequence
$$0\to \xi^\vee \to \und{\End}(E,\OO_C)\to \und{\End}(\xi)\oplus\OO_C\to 0$$
induces a surjection $\Hom(\und{\End}(E,\OO_C),\OO_C)\to \lan\phi\ran^\perp\sub H^0(C,\xi)$.
The Poisson bivector $\Pi$ of the Feigin-Odesskii bracket is uniquely determined by the condition that
its value $\Pi_\phi$ at $\phi$ fits into a commutative diagram
%\begin{diagram}
\begin{equation}\label{Pi-definition-diagram}
\xymatrix{
H^0(C,\und{\End}(E,\OO_C)^\vee)\ar[r]^{\de}\ar[d]_{\alpha_1} &H^1(C,\xi^\vee)\ar[d]_{\alpha_2}\\
%\dTo{}&&\dTo{}\\ 
\lan\phi\ran^\perp\ar[r]^{\Pi_\phi} &H^1(C,\xi^\vee)/\lan\phi\ran
}
\end{equation}
%\end{diagram}

We are going to show that this Poisson bracket can be computed as a triple Massey product. 
We refer to \cite[Sec.\ 2]{FP} for a general background on Massey products. What is important for us is that they can be calculated
in two ways, either using the triangulated structure (this definition has its origin in Toda brackets, see \cite{Cohen}), or
using the dg-resolutions.

\begin{lemma}\label{Poisson-Massey-lem} 
The Poisson bracket $\Pi_\phi:\lan\phi\ran^{\perp}\to H^1(\xi^\vee)/\lan\phi\ran$ is given by $x\mapsto MP(\phi,x,\phi)$, where
we use the triple Massey product
$$\xi[-1]\rTo{\phi}\OO\rTo{x} \xi\rTo{\phi}\OO[1].$$
Equivalently, for $s_1,s_2\in \lan\phi\ran^{\perp}\sub H^0(C,\xi)$ one has
$$\Pi_\phi(s_1\we s_2)=\pm\lan \phi, MP(s_1,\phi,s_2)\ran.$$
\end{lemma}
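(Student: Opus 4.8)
The strategy is to unwind the definition of the Poisson bivector $\Pi_\phi$ encoded in diagram \eqref{Pi-definition-diagram} and identify the boundary map $\delta$, composed with $\alpha_1,\alpha_2$, with the triple Massey product operation $MP(\phi,-,\phi)$. The key observation is that both $\delta$ and the Massey product $x\mapsto MP(\phi,x,\phi)$ are constructed from the \emph{same} geometric datum, namely the extension $0\to\OO_C\to E\to\xi\to 0$ representing $\phi$. So the plan is: first, choose a dg-model (e.g.\ a Dolbeault resolution, or a Cech/injective model) in which $\phi\in\Ext^1(\xi,\OO)$ is represented by an explicit cocycle, and realize $E$ as the corresponding cone; then show that the two short exact sequences in the setup — the one defining $\und{\End}(E,\OO_C)$ and the one relating it to $\xi^\vee$ — literally compute, via their boundary maps, the two ``composition with $\phi$'' stages of the Massey product $\xi[-1]\xrightarrow{\phi}\OO\xrightarrow{x}\xi\xrightarrow{\phi}\OO[1]$.

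More concretely, I would proceed as follows. An element of $H^0(\und{\End}(E,\OO_C)^\vee)=\Hom(\und{\End}(E,\OO_C),\OO_C)$ maps under $\alpha_1$ to $x\in\lan\phi\ran^\perp\subset H^0(\xi)=\Hom(\OO,\xi)$; I want to see that such a lift exists precisely because $\phi\circ x=0$ in $\Ext^1(\OO,\OO)$ — this is exactly the vanishing needed to define the first Massey product bracket, and the choice of lift corresponds to the choice of homotopy trivializing $\phi x$. Applying $R\Hom(-,\OO_C)$ to $0\to\und{\End}(E,\OO_C)\to\und{\End}(E)\to\xi\to 0$ and chasing the lifted element through the boundary map $\delta$ produces an element of $\Ext^1(\xi,\OO)$ which, after projecting by $\alpha_2$ to $H^1(\xi^\vee)/\lan\phi\ran$, is the class obtained by post-composing with $\phi$ again — i.e.\ the second stage, and the result is well-defined modulo $\phi\cdot\Ext^0(\xi,\xi)+\Ext^0(\OO,\OO)\cdot\phi$, which is exactly the indeterminacy $\lan\phi\ran$. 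Identifying the bundle-theoretic boundary map with the triangulated-category Massey product requires translating the extension $E=\mathrm{Cone}(\xi[-1]\xrightarrow{\phi}\OO)$ and the induced filtration on $\und{\End}(E)$ into the octahedron/Toda bracket description of $MP$; this is most cleanly done by working in a dg-enhancement and writing everything with explicit cochains.

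The equivalence of the two formulations — the map-theoretic $x\mapsto MP(\phi,x,\phi)$ versus the bilinear $\Pi_\phi(s_1\wedge s_2)=\pm\lan\phi,MP(s_1,\phi,s_2)\rg$ — should follow from the symmetry and associativity properties of Massey products (cyclic symmetry of triple Massey products up to sign, which comes from the $A_\infty$/Stasheff relations, see \cite{FP}) together with Serre duality pairing $\Ext^1(\xi,\OO)\cong H^1(\xi^\vee)$ with $H^0(\xi)$: one rewrites $\lan\phi,MP(\phi,x,\phi)\rg$ (for $x=s_1$, paired against $s_2$, say) using the cyclic invariance of the length-three Massey product under Serre duality, which turns $MP(\phi,s_1,\phi)$ paired with $s_2$ into $\phi$ paired with $MP(s_1,\phi,s_2)$. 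The sign is best left tracked only up to $\pm$, as in the statement.

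\textbf{Main obstacle.} The hard part will be the precise identification of the connecting homomorphism $\delta$ arising from the three short exact sequences of bundles with the triangulated-category triple Massey product, keeping careful track of which homotopies/lifts correspond to which choices, and checking that the indeterminacies match (both equal $\lan\phi\ran$). This is essentially a bookkeeping problem that becomes transparent only after fixing a good dg-model; the conceptual content is that the bundle $\und{\End}(E,\OO_C)$ is a geometric incarnation of the twisted complex computing $MP(\phi,-,\phi)$, but making this rigorous requires care. A secondary subtlety is fixing the sign, which I would not attempt to pin down beyond $\pm$.
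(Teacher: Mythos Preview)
Your strategy is essentially correct and parallels the paper's, though the paper's execution is more economical. Rather than descending to a dg-model with explicit cocycles, the paper works directly with the triangulated-category (Toda-bracket) description of the Massey product: the recipe $x\mapsto MP(\phi,x,\phi)$ is realized by the boundary map $\de':H^0(E^\vee\ot\xi)\to H^1(E^\vee)$ obtained by applying $R\Hom(E,-)$ to the extension sequence, and compatibility with the defining diagram for $\Pi_\phi$ is checked via an elementary commutative square relating $\de$ and $\de'$ through the natural maps $H^0(\und{\End}(E,\OO_C)^\vee)\to H^0(E^\vee\ot\xi)$ and $H^1(\xi^\vee)\to H^1(E^\vee)$. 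This sidesteps the bookkeeping you flag as the main obstacle: no explicit homotopies are tracked, only diagram chases at the level of cohomology groups. (The paper does mention that one could alternatively use the Cech-cocycle formula from \cite{HP1}, which is closer to what you sketch.)

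One point deserves care. You attribute the cyclic symmetry $\lan MP(\phi,s_1,\phi),s_2\ran=\pm\lan\phi,MP(s_1,\phi,s_2)\ran$ to ``the $A_\infty$/Stasheff relations,'' but the Stasheff relations alone do not give this identity. What is needed is that the minimal $A_\infty$-structure on the $\Ext$-algebra can be chosen \emph{cyclic} with respect to the Serre-duality pairing --- a genuine additional input, which the paper invokes via \cite{Polcyc}. Without it, $m_3$ is only determined up to $A_\infty$-equivalence, and the cyclic identity can fail for a non-cyclic representative. So your second step needs the Calabi-Yau/cyclic structure on $D^b(C)$ explicitly; once that is in place, the rest of your argument goes through.
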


\begin{proof} One way to get the first formula is to use the formula for $\Pi_\phi$ in terms of Cech resolutions given in \cite[Sec.\ 5.2]{HP1}.
We will instead use the standard recipe for calculating triple Massey products based on including the first arrow $\xi[-1]\to \OO$ into an exact triangle
with $E$ as the cone (see \cite[Sec.\ 2]{Cohen}. Namely, this recipe tells that the map $x\mapsto MP(\phi,x,\phi)$ fits into a commutative diagram
\begin{equation}\label{MP-diagram}
\xymatrix{
H^0(C,E^\vee\ot\xi)\ar[r]^{\de'}\ar[d]_{\beta_1} &H^1(C,E^\vee)\\
%\dTo{}&&\dTo{}\\ 
\lan\phi\ran^\perp\ar[r]^{MP(\phi,?,\phi)} &H^1(C,\xi^\vee)/\lan\phi\ran\ar[u]_{\beta_2}
}
\end{equation}
where $\de'$ is the boundary homomorphism obtained by applying $R\Hom(E,?)$ to the extension sequence.
Now the assertion follows easily from the commutative diagram
\[\xymatrix{
H^0(C,\und{\End}(E,\OO_C)^\vee)\ar[r]^{\de}\ar[d]_{\gamma_1} &H^1(C,\xi^\vee)\ar[d]_{\gamma_2}\\
H^0(C,E^\vee\ot\xi)\ar[r]^{\de'} &H^1(C,E^\vee)\\
%\dTo{}&&\dTo{}\\ 
%\lan\phi\ran^\perp\ar[r]^{MP(\phi,?,\phi)} &H^1(C,\xi^\vee)/\lan\phi\ran
}\]
together with the fact that the vertical arrows in \eqref{Pi-definition-diagram} and \eqref{MP-diagram}
are related by $\alpha_1=\beta_1\gamma_1$, $\gamma_2=\beta_2\alpha_2$.

Next, we note that in terms of $A_\infty$-structure obtained by homological perturbation
we have
$$MP(\phi,s_1,\phi)\equiv \pm m_3(\phi,s_1,\phi) \mod \lan\phi\ran.$$
Next, we use the cyclic symmetry (here we use the existence of a cyclic minimal $A_\infty$-structure that follows from \cite[Sec.\ 6.5]{Polcyc}):
$$\lan m_3(\phi,s_1,\phi),s_2\ran=\pm \lan \phi,m_3(s_1,\phi,s_2)\ran.$$

It remains observe that in the right-hand side of the last formula we can replace $m_3(s_1,\phi,s_2)$ by the corresponding Massey product 
$$MP(s_1,\phi,s_2)\in H^0(C,\xi)/\lan s_1,s_2\ran.$$
Indeed, the pairing with $\phi$ is zero on the subspace $\lan s_1,s_2\ran\sub\lan\phi\ran^{\perp}$.
\end{proof}

\begin{remark} The sign ambiguity in Lemma \ref{Poisson-Massey-lem} (and in other statements below involving Massey products)
can be resolved: the signs appear from the cyclicity constraint for $A_\infty$-structures and from relating Massey products with $m_3$
(see e.g., \cite[Sec.\ 2]{FP}). For our purposes the exact value of the sign is not important.
\end{remark}

We have the following nice formula for the rank of the Poisson bracket $\Pi$ on $\PP\Ext^1(\xi,\OO)$.

\begin{prop}\label{Poisson-rank-prop} 
For a non-trivial extension
$$0\to \OO_C\to E\to \xi\to 0$$
with the class $\phi\in \Ext^1(\xi,\OO_C)$, one has
$$\rk \Pi_\phi=\deg(\xi)-\dim\Hom(E,E).$$
%One has $\Pi_\phi=0$ if an only if the composition map
%$$Hom(E_\phi,\xi)\rTo{\phi\circ} Ext^1(E_\phi,\OO)$$
%is zero, where $E_\phi$ is the extension of $\xi$ by $\OO$ corresponding to $\phi$.
\end{prop}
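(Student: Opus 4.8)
The plan is to compute the rank of $\Pi_\phi$ by unwinding the definition in diagram \eqref{Pi-definition-diagram}, using the fact that $\alpha_1$ is surjective and $\alpha_2$ has kernel $\lan\phi\ran$. First I would observe that since $\alpha_1$ is surjective and $\alpha_2$ has one-dimensional kernel, the rank of $\Pi_\phi$ equals $\dim\de\bigl(H^0(C,\und{\End}(E,\OO_C)^\vee)\bigr)$ minus the correction coming from whether $\lan\phi\ran$ lies in the image of $\de$. So the key is to understand the image and kernel of the boundary map $\de:\Hom(\und{\End}(E,\OO_C),\OO_C)\to \Ext^1(\xi,\OO_C)$.

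Next I would analyze this $\de$ via the long exact sequence obtained by applying $R\Hom(-,\OO_C)$ to the defining sequence $0\to \und{\End}(E,\OO_C)\to \und{\End}(E)\to \xi\to 0$. This gives
\[
\Hom(\und{\End}(E),\OO_C)\to\Hom(\und{\End}(E,\OO_C),\OO_C)\rTo{\de}\Ext^1(\xi,\OO_C)\to\Ext^1(\und{\End}(E),\OO_C).
\]
Using Serre duality on the elliptic curve $C$ (with the fixed trivialization $\om_C\simeq\OO_C$), the term $\Ext^1(\und{\End}(E),\OO_C)$ is dual to $\Hom(\OO_C,\und{\End}(E))=\Hom(E,E)$, and similarly $\Hom(\und{\End}(E),\OO_C)$ is dual to $\Ext^1(\OO_C,\und{\End}(E))=\Ext^1(E,E)$. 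By Riemann-Roch on the elliptic curve, $\dim\Hom(E,E)=\dim\Ext^1(E,E)$, and $\dim\Ext^1(\xi,\OO_C)=\dim H^1(C,\xi^\vee)=\deg\xi$ (since $\chi(\xi^\vee)=-\deg\xi$ and $H^0(\xi^\vee)=0$ by stability of $\xi$ with positive degree). Feeding these dimension counts into the exact sequence gives that the image of $\de$ has dimension $\deg\xi-\dim\Hom(E,E)+\dim(\text{coker of the first map})$, and a short diagram chase — tracking the map $\Hom(\und{\End}(E),\OO_C)\to\Hom(\und{\End}(E,\OO_C),\OO_C)$ and identifying its cokernel's contribution with the class $\lan\phi\ran$ — should show the net rank is exactly $\deg(\xi)-\dim\Hom(E,E)$.

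An alternative, and perhaps cleaner, route is to use the Massey product description from Lemma \ref{Poisson-Massey-lem} together with diagram \eqref{MP-diagram}: there $\Pi_\phi$ is computed through $\de':H^0(C,E^\vee\ot\xi)\to H^1(C,E^\vee)$, the boundary map for $R\Hom(E,-)$ applied to $0\to\OO_C\to E\to\xi\to 0$. The image of $\de'$ is the kernel of $H^1(C,E^\vee)\to H^1(C,E^\vee\ot\xi)$, i.e. of $\Ext^1(E,\OO_C)\to\Ext^1(E,E)$, and by Serre duality this kernel has dimension $\dim\Ext^1(E,\OO_C)-\dim(\text{image in }\Ext^1(E,E))$. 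Combining $\dim\Ext^1(E,\OO_C)=\dim H^1(E^\vee)=\deg E=\deg\xi$ (again $H^0(E^\vee)=0$ since $E$ is an extension of a positive-degree stable bundle by $\OO_C$ and has no global sections of its dual) with the observation that the map to $\Ext^1(E,E)$ is, up to the boundary, the natural one whose cokernel pairs with $\Hom(E,E)$ under Serre duality, yields $\rk\Pi_\phi=\deg\xi-\dim\Hom(E,E)$ directly.

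The main obstacle I expect is bookkeeping: carefully identifying the various boundary maps with natural maps on $\Ext$-groups, keeping track of the one-dimensional discrepancies coming from $\lan\phi\ran\subset H^1(\xi^\vee)$ and from $\OO_C\subset E$ (equivalently the scalars inside $\und{\End}(E,\OO_C)$), and making sure the dualities are applied consistently so that no spurious $\pm 1$ creeps into the final dimension count. Once the identification of $\im\de$ (or $\im\de'$) with an explicit kernel of a map between $\Ext^1$ groups is pinned down, the Euler characteristic computation on the elliptic curve and the vanishing $H^0(C,E^\vee)=0$ make the rest routine.
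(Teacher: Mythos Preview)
Your second route---factoring $\Pi_\phi$ through the boundary map $\de':\Hom(E,\xi)\to\Ext^1(E,\OO_C)$ obtained from $R\Hom(E,-)$ applied to the extension, and then reading off its rank from the long exact sequence---is exactly the paper's argument. The paper phrases it on the kernel side: since $\Hom(E,\OO_C)=0$ the kernel of $\de'$ is $\Hom(E,E)$, and since $\Ext^1(E,\xi)=0$ Riemann--Roch gives $\dim\Hom(E,\xi)=\deg\xi$, so $\rk\de'=\deg\xi-\dim\Hom(E,E)$.

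There is, however, a genuine gap at the step ``the map to $\Ext^1(E,E)$ is \ldots\ the natural one whose cokernel pairs with $\Hom(E,E)$ under Serre duality.'' That is not correct: the cokernel of $\Ext^1(E,\OO_C)\to\Ext^1(E,E)$ lies in $\Ext^1(E,\xi)\simeq\Hom(\xi,E)^\vee$, not in $\Hom(E,E)^\vee$. What you actually need is that this cokernel \emph{vanishes}, i.e.\ $\Hom(\xi,E)=0$, and this is the second place where non-triviality of the extension enters (the first being $\Hom(E,\OO_C)=0$): applying $\Hom(\xi,-)$ to the sequence, the boundary $k=\Hom(\xi,\xi)\to\Ext^1(\xi,\OO_C)$ sends $\id_\xi$ to $\phi\neq 0$, whence $\Hom(\xi,E)=0$. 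Once $\Ext^1(E,\xi)=0$ is in hand your dimension count goes through; the injectivity of $\beta_2$ (needed to equate $\rk\Pi_\phi$ with $\rk\de'$) follows separately from $\Hom(E,\OO_C)=0$ via the long exact sequence for $0\to\xi^\vee\to E^\vee\to\OO_C\to 0$.
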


\begin{proof}
By definition, the map $\Pi_{\phi}:\lan\phi\ran^\perp\to H^1(\xi^\vee)/\lan\phi\ran$
fits into the following sequence of arrows, whose composition is the cup product with $\phi$:
$$\Hom(E,\xi)\to\lan\phi\ran^\perp\rTo{\Pi_{\phi}} H^1(\xi^\vee)/\lan\phi\ran\to
H^1(E^\vee)=\Ext^1(E,\OO),$$
where the first map is a surjection induced by the natural map $\Hom(E,\xi)\to \Hom(\OO_C,\xi)$ and the last map is an
injection induced by the natural map $H^1(\xi^\vee)\to H^1(E^\vee)$.
Hence, the rank of $\Pi_{\phi}$ is equal to the rank of the cup product with $\phi$ map,
$$\Hom(E,\xi)\rTo{\phi} \Ext^1(E,\OO).$$ 
Note that $\Hom(\xi,\OO_C)=0$ since $\xi$ is stable of positive slope, and hence, $\Hom(E,\OO_C)=0$ since the extension does not split.
Hence, the kernel of the above map is exactly $\Hom(E,E)$. Furthermore, we have $\Ext^1(E,\xi)=\Hom(\xi,E)^\vee=0$ since the extension does not split.
Hence, by the Riemann-Roch formula $\dim \Hom(E,\xi)=\deg(\xi)$ and the assertion follows.
\end{proof}

%Below we will show how to compute the above Massey product in the case when $\xi$ is a line bundle.

%consider another triple Massey product which is related to the one above by cyclic symmetry, 
%namely the one for the composable arrows
%$$\OO[-1]\rTo{x} \xi[-1]\rTo{\phi}\OO\rTo{x'} \xi.$$

\subsection{Recovering the elliptic curve from the Poisson bracket}

\begin{theorem}\label{reconstr-thm}
Fix an integer $d>2$. 
Suppose $\xi$ is a stable vector bundle of rank $r<d-1$ and degree $d$ on an elliptic curve $C$, and $\xi'$ a stable vector bundle of rank $r'<d-1$ and the same degree $d$ on 
another elliptic curve $C'$.
If there exists a Poisson isomorphism $\PP \Ext^1(\xi,\OO_C)\simeq \PP \Ext^1(\xi',\OO_{C'})$ then $C\simeq C'$.
\end{theorem}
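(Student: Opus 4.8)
The plan is to recover the curve $C$ as a geometric object canonically attached to the Poisson variety $\PP\Ext^1(\xi,\OO_C)$, by identifying the degeneracy loci of the Poisson bivector. By Proposition \ref{Poisson-rank-prop}, at a point $\phi$ corresponding to a nonsplit extension $0\to\OO_C\to E\to\xi\to 0$, the corank of $\Pi_\phi$ (relative to the generic rank on the component) is governed by $\dim\Hom(E,E)$. For a generic $\phi$ the bundle $E$ is simple, so $\dim\Hom(E,E)=1$ and $\rk\Pi_\phi=d-1=\dim\PP\Ext^1(\xi,\OO_C)$, i.e.\ $\Pi$ is generically symplectic (here we use $r<d-1$, which guarantees $h^1(\xi)=0$ so that $\dim\Ext^1(\xi,\OO_C)=d$). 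The first step is therefore to describe the jump locus $D_j=\{\phi:\dim\Hom(E_\phi,E_\phi)\ge j+1\}$, and in particular the maximal-degeneracy stratum, in intrinsic Poisson-geometric terms, so that a Poisson isomorphism must carry these strata to the corresponding strata on the $C'$ side.

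Next I would pin down which $\phi$ give the deepest degeneracy. The bundle $E$ has rank $r+1$ and degree $d$; since the extension is nonsplit and $\xi$ is stable of positive slope, $E$ is an iterated extension whose endomorphism algebra is largest exactly when $E$ is maximally decomposable up to the constraints imposed by sitting in that extension. The cleanest case to isolate is when $\xi$ itself, or a Jordan–Hölder-type analysis of $E$, forces $E\cong \OO_C\oplus\xi$ — but that is the split case, which is excluded; so instead the relevant locus is where $E$ becomes a nonsplit self-extension of a line bundle times something, which happens along a copy of $C$. Concretely, for $\phi$ lying on the image of $C$ under a natural map $C\to\PP\Ext^1(\xi,\OO_C)$ (coming, e.g., from the distinguished points where $E$ acquires an extra endomorphism, such as pushouts along $\OO_C\hookrightarrow \OO_C(p)$ — this is exactly the kind of rational curve of minimal-rank points that appears in Feigin–Odesskii geometry), the value of $\dim\Hom(E,E)$ jumps, and the closure of this jump locus is a curve isomorphic to $C$. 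The key technical claim is: the irreducible component of top dimension of some $D_j$ (for the appropriate $j$) is a smooth genus-one curve isomorphic to $C$, embedded by a complete linear system, and this whole picture is read off from $\Pi$ alone.

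The final step is bookkeeping: a Poisson isomorphism $F:\PP\Ext^1(\xi,\OO_C)\xrightarrow{\sim}\PP\Ext^1(\xi',\OO_{C'})$ sends generic-rank locus to generic-rank locus and each degeneracy stratum $D_j$ to $D_j'$, hence carries the distinguished curve $C\hookrightarrow\PP\Ext^1(\xi,\OO_C)$ isomorphically onto the distinguished curve $C'\hookrightarrow\PP\Ext^1(\xi',\OO_{C'})$; composing with the two embeddings yields $C\cong C'$. The hypothesis $d>2$ and $r,r'<d-1$ enters twice: it makes $\Pi,\Pi'$ generically symplectic (so "corank" is an honest Poisson invariant and not an artifact of the ambient dimension), and it guarantees $\dim\PP\Ext^1(\xi,\OO_C)=d-1\ge 2$ so that the degeneracy divisor/curve is a proper nonempty subvariety carrying real information rather than being everything or nothing.

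The main obstacle I anticipate is the middle step: showing that the maximal-degeneracy locus of $\Pi$ is precisely a copy of $C$ (and not, say, a higher-dimensional variety containing $C$, or a union of several curves among which $C$ is not canonically distinguished). This requires a careful analysis of the function $\phi\mapsto\dim\Hom(E_\phi,E_\phi)$ on $\PP\Ext^1(\xi,\OO_C)$ — essentially classifying which bundles $E$ of rank $r+1$ and degree $d$ occur as nonsplit extensions of $\xi$ by $\OO_C$ together with the jumps of their automorphism groups — and then verifying that the unique top-dimensional stratum with the correct corank value is irreducible, one-dimensional, smooth of genus one, and abstractly isomorphic to $C$. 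Everything else (generic simplicity of $E$, the Riemann–Roch count $\dim\Hom(E,\xi)=d$, and the transport of strata under $F$) is routine given Proposition \ref{Poisson-rank-prop}.
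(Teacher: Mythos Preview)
Your overall instinct---recover $C$ from the degeneracy stratification of the Poisson bivector---matches the paper's strategy, but the concrete claims you make in the middle are wrong, and the obstacle you flag at the end is exactly where the argument breaks down.

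First, the assertion that the generic $E_\phi$ is simple, hence $\rk\Pi_\phi=d-1$ generically, is false in general. Set $c=\gcd(d,r+1)$. The bundle $E_\phi$ has rank $r+1$ and degree $d$, so when $c>1$ it can never be stable; the paper shows (Lemma \ref{rank-classification-lem}(i)) that $\dim\End(E_\phi)\ge c$ always, with equality generically, so the generic rank of $\Pi$ is $d-c$, not $d-1$. For instance with $r=3$, $d=8$ one has $c=4$ and the generic rank is $4$. So ``generically symplectic'' and ``$E$ generically simple'' are not available, and your use of $r<d-1$ to force this is a red herring (the condition $r<d-1$ is only there to make the bracket nonzero).

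Second, the locus the paper actually uses is not the maximal-degeneracy locus but the closure $Z$ of the set where the rank drops by exactly $2$ from the generic value $d-c$. This $Z$ is not a curve: its irreducible components are of dimension $d-c-2$ (or $d-5$ in one case), and the paper shows that each \emph{nonrational} component is birational to $\AA^m\times C$ for some $m$, and that at least one such component exists (Proposition \ref{Poisson-degeneration-prop}). There are also rational components (coming, when $c\ge 2$, from $\phi$ with $E_\phi$ semistable but with a repeated stable summand), which carry no information about $C$ and must be discarded. The classification of the two possible types of $E_\phi$ with $\dim\End(E_\phi)=c+2$ (Lemma \ref{rank-classification-lem}(ii)) is what makes this precise.

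Finally, since one only gets $C$ up to a factor of affine space and up to birational equivalence, the endgame is not ``the embedded curve is isomorphic to $C$'' but rather the standard fact that $\AA^m\times C$ and $\AA^{m'}\times C'$ can be birational only if $C\simeq C'$. Your outline would need to be rewritten with the correct generic rank $d-c$, the correct stratum (rank $d-c-2$), the correct geometry of its components (higher-dimensional, only birationally of the form $\AA^m\times C$), and the separation of rational from nonrational components.
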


A trivial example is when $r=1$ and $d=3$: then $C$ is recovered as the vanishing locus of the Poisson structure. If $r=1$ and $d=4$, then $C$ is a connected component
of the vanishing locus of the Poisson structure (the entire vanishing locus is the union of $C$ with $4$ points).

The proof is based on the following observation. We fix an elliptic curve $C$ and a stable vector bundle $\xi$ as in the above Theorem.
%study of the symplectic leaves of the Poisson structure on $\PP \Ext^1(\xi,\OO_C)$ (recall that all symplectic leaves are algebraic;
%they correspond to fixing an isomorphism type of the extending vector bundle). 
Let $c=gcd(d,r+1)$. 

\begin{prop}\label{Poisson-degeneration-prop}
The generic rank of the Poisson structure $\Pi$ on $\PP \Ext^1(\xi,\OO_C)$ is $d-c$.
Let $Z\sub \PP  \Ext^1(\xi,\OO_C)$ be the Zariski closure of the set of all points where the rank of $\Pi$ is $d-c-2$.
Then each nonrational irreducible component of $Z$ is birational to $\AA^m\times C$ for some $m$, and there exists at least one such component.
\end{prop}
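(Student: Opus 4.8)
The plan is to analyze the rank of $\Pi_\phi$ via Proposition \ref{Poisson-rank-prop}, which says $\rk\Pi_\phi = \deg\xi - \dim\Hom(E,E) = d - \dim\Hom(E,E)$, where $0\to\OO_C\to E\to\xi\to 0$ is the extension corresponding to $\phi$. So the rank of $\Pi$ is governed entirely by $\dim\End(E)$, and I want to understand the stratification of $\PP\Ext^1(\xi,\OO_C)$ by the value of $\dim\End(E)$. First I would establish the generic value. For a generic extension $\phi$, the bundle $E$ is a rank $r+1$, degree $d$ bundle; the minimal possible value of $\dim\End(E)$ for a bundle of rank $\rho$ and degree $\delta$ on an elliptic curve is $\gcd(\rho,\delta)$, achieved exactly when $E$ is semistable with all Jordan--Hölder factors isomorphic (equivalently, $E$ is a "generic" semistable bundle of that slope — a successive self-extension of a single stable bundle). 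I would check that a generic extension of $\xi$ by $\OO_C$ is indeed such a bundle: since $\xi$ is stable of slope $d/r > 0 = \mu(\OO_C)$, the generic $E$ is semistable of slope $d/(r+1)$, and a dimension count on the extension space versus the locus of non-generic semistable bundles shows that for generic $\phi$ all JH factors of $E$ coincide. This gives generic rank $d - \gcd(d, r+1) = d-c$.

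Next, the degeneration locus. The rank drops by $2$ (the minimum possible drop, since $\dim\End(E)$ changes parity-compatibly — actually it can only jump by even amounts when $E$ stays semistable of the same slope, because $\dim\End(E) = \sum \dim\Hom(F_i,F_j)$ over JH factors and moving one stable factor to a non-generic position changes this by $\gcd$-type increments; I would make this precise) exactly on the locus $Z$ where $\dim\End(E) = c+2$. The key structural claim is: on the open part of $Z$, the bundle $E$ is still semistable of slope $d/(r+1)$, but its "type" degenerates minimally. Concretely, write $d = c d_0$, $r+1 = c e_0$ with $\gcd(d_0,e_0)=1$; the generic $E$ is $W^{\oplus c}$ for a fixed stable bundle $W$ of rank $e_0$, degree $d_0$. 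The next stratum consists of $E$ whose associated graded is $W_1\oplus W_2\oplus W^{\oplus(c-2)}$ with $W_1\not\cong W_2$ stable of the same slope $d_0/e_0$ but $W_1,W_2$ generic and close to $W$, OR $E$ sits in a nonsplit self-extension involving such factors — in all these cases $\dim\End$ jumps to $c+2$. The point of "each nonrational component is birational to $\AA^m\times C$" is that the moduli of stable bundles of rank $e_0$, degree $d_0$ on $C$ is isomorphic to $C$ (Atiyah/Tu), so the choice of $W_1$ (or the pair, or the degenerating factor) is parametrized by $C$, while the remaining extension data and the choice of $\phi$ mapping to such an $E$ sweep out an affine space; the fibers of $\PP\Ext^1(\xi,\OO_C)\dashrightarrow \{E\}$ are also affine (projectivized vector spaces minus loci). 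I would identify the irreducible components of $Z$ with these modular strata, compute their dimensions, and check that the non-rational ones are exactly the ones where the $C$-parameter genuinely varies (as opposed to components where $E$ becomes unstable, which turn out to be rational — a separate case to handle, using that unstable $E$ are classified by sub/quotient line-bundle-type data on $C$ which, combined with $\Ext$-data, gives rational varieties, OR showing such components don't reach rank $d-c-2$ at all).

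The existence of at least one such nonrational component I would get by exhibiting an explicit one: pick $W$ generic of rank $e_0$, degree $d_0$; the family of extensions $E$ with $\gr E \cong W_1\oplus W^{\oplus(c-1)}$ where $W_1$ ranges over stable bundles of rank $e_0$, degree $d_0$ near $W$ (a curve $\cong C$) and the extension class is chosen so that $\dim\End E = c+2$ — this family maps to $Z$ with image of dimension $\ge \dim C + (\text{something})$, dominating a component that carries a nonconstant map to $C$, hence is not rational (a curve dominated by a rational variety is rational, and $C$ is not). I'd need to confirm the image actually lands in $Z$ and not in a higher stratum, which is the computation $\dim\Hom(W_1, W^{\oplus(c-1)})\oplus\End(W^{\oplus(c-1)})\oplus\End(W_1) = (c-1) + (c-1)^2\cdot 1\cdot\!\!\ldots$ — hmm, more carefully $\dim\Hom(W,W)=1$, $\dim\Hom(W_1,W)=\dim\Hom(W,W_1)=0$ for $W_1\not\cong W$, so $\gr$ with these factors gives $\dim\End = (c-1)^2\cdot$ no — $\End(W^{\oplus(c-1)})$ has dimension $(c-1)^2$, which already overshoots; this signals that the right degenerate type is subtler and the self-extensions must be arranged to keep $\End$ small, which is exactly where the real work lies.

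\textbf{Main obstacle.} The hard part is the precise classification of the rank-$(d-c-2)$ stratum: showing it is \emph{exactly} the locus of $E$ of a specified degenerate semistable type, controlling which irreducible components arise, and proving each nonrational one is birational to $\AA^m\times C$ rather than some other $C$-fibration. This requires a careful analysis of $\dim\End(E)$ as a function on the stratification of semistable bundles of slope $d/(r+1)$ on an elliptic curve (the combinatorics of how $\End$ of a successive self-extension of stable bundles depends on the multiplicities and the isomorphism types of factors), together with an Atiyah-style parametrization of each stratum. Ruling out spurious components — e.g. loci where $E$ becomes unstable, or where $\End$ jumps for reasons unrelated to the $C$-parameter — and checking that the $C$-dependence is genuinely nontrivial (giving a dominant map to $C$) is where I expect to spend most of the effort.
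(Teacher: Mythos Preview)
Your approach has the right starting point (reducing to $\rk\Pi_\phi = d - \dim\End(E_\phi)$ via Proposition \ref{Poisson-rank-prop}), but the structural analysis of $E_\phi$ goes wrong in two linked ways.

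First, the generic $E_\phi$ is not $W^{\oplus c}$: for $W$ stable of slope $\mu_0$ one has $\dim\End(W^{\oplus c})=c^2$, not $c$. (You notice this yourself at the end.) The category of semistable bundles of slope $\mu_0$ is equivalent to torsion sheaves, so a semistable $E_\phi$ has length $c$ there, and the minimum $\dim\End=c$ is achieved exactly when $E_\phi=\bigoplus E_i$ with the $E_i$ indecomposable of slope $\mu_0$ and $\Hom(E_i,E_j)=0$ for $i\neq j$ --- for instance $c$ \emph{pairwise non-isomorphic} stable summands, or a single indecomposable of length $c$. Your phrase ``successive self-extension of a single stable bundle'' describes one such $E$, but that is not $W^{\oplus c}$.

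Second, and this is the real gap, you try to discard the unstable case as rational or absent, whereas it is the principal source of the nonrational components. The correct classification (Lemma \ref{rank-classification-lem} in the paper) shows that $\dim\End(E_\phi)=c+2$ occurs in exactly two ways: (a) $E_\phi$ is \emph{unstable}, $E_\phi\simeq \xi_1\oplus\xi_2$ with $\xi_1,\xi_2$ stable of \emph{different} slopes satisfying $\chi(v(\xi_1),v_0)=1$; or (b) $E_\phi$ is semistable of slope $\mu_0$ with a single pair of indecomposable summands having nonzero $\Hom$, one of them stable (this case only arises for $c>1$). The components of $Z$ coming from type (a) are birational to $\AA^{d-c-2}\times \MM(v_2)$, where $\MM(v_2)\simeq C$ is the moduli of stable bundles with the numerical invariant $v_2=v(\xi_2)$; that is where the $C$-parameter lives. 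Existence of at least one such component is then a lattice argument: one produces a decomposition $cv_0=v_1+v_2$ in $\ZZ^2$ with $\chi(v_1,v_0)=1$ and $d_2>c$. For $c=1$ type (b) does not occur at all, so the unstable case is the \emph{only} source of nonrational components --- precisely the case your outline rules out. Your attempt to locate the $C$-parameter by varying a JH factor within the slope-$\mu_0$ moduli, keeping the other summands isomorphic, cannot give $\dim\End=c+2$; as you computed, it forces $\dim\End\ge 1+(c-1)^2$.
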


Let us set 
$$v_0:=(\frac{d}{c},\frac{r+1}{c})\in \ZZ^2,$$ 
and let $\mu_0=d/(r+1)$ be the corresponding slope.
Let also set
$$v:=(d,r).$$
We denote by $\chi:\ZZ^2\times\ZZ^2\to \ZZ$ the bilinear form
$$\chi((d_1,r_1),(d_2,r_2))=d_2r_1-d_1r_2.$$
For a vector bundle $E$ we denote by $v(E)$ the corresponding vector $(\deg(E),\rk(E))$.

\begin{lemma}\label{rank-classification-lem}
Let $E_\phi$ denote the extension corresponding to a nonzero class $\phi\in \Ext^1(\xi,\OO_C)$.

\noindent
(i) We have $\rk \Pi_\phi\le d-c$ with equality if and only if $E_\phi=\bigoplus E_i$ where $E_i$ indecomposable bundles of slope $\mu_0$ with
$\Hom(E_i,E_j)=0$ for $i\neq j$.

\noindent
(ii) One has $\rk \Pi_\phi=d-c-2$ in one of the two cases:
\begin{itemize}
\item $E_\phi\simeq E_1\oplus E_2$, where both $E_1$ and $E_2$ are stable, $\chi(v(E_1),v_0)=1$ (and hence, $\chi(v(E_2),v_0)=-1$);
\item $E_\phi\simeq E_1\oplus E_2\oplus\ldots\oplus E_m$, where all $E_i$ are indecomposable of slope $\mu_0$, $E_1$ is stable and 
$\Hom(E_i,E_j)=0$ for $i\neq j$, $(i,j)\neq (1,2),(2,1)$.
\end{itemize}
The second case occurs only for $c>1$.
\end{lemma}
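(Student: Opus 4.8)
The plan is to reduce the statement, via Proposition~\ref{Poisson-rank-prop}, to a computation of $\dim\End(E_\phi)$, and then to run a case analysis based on Atiyah's classification of bundles on $C$. Write $E=E_\phi$, a vector bundle with $v(E)=(d,r+1)=c\,v_0$ and slope $\mu_0$. Decompose $E=\bigoplus_\alpha E_\alpha$ into indecomposables; by Atiyah each $E_\alpha$ is semistable and $\dim\End(E_\alpha)=\gcd(\deg E_\alpha,\rk E_\alpha)$. Grouping summands by slope, write $E=\bigoplus_{j=1}^t E^{(j)}$ with $E^{(j)}$ semistable of slope $\nu_j$ and $\nu_1>\cdots>\nu_t$ (so $E$ is semistable iff $t=1$). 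Two elementary facts, both using $\om_C\simeq\OO_C$: (a) if $F,G$ are semistable with $\mu(F)>\mu(G)$ then $\Hom(F,G)=0$, since $F^\vee\otimes G$ is semistable of negative degree; (b) if $F,G$ are semistable of the same slope then $\dim\Hom(F,G)=\dim\Hom(G,F)$, since $F^\vee\otimes G$ is semistable of degree $0$ and Serre duality gives $h^1(F^\vee\otimes G)=h^0(G^\vee\otimes F)$. By (a), $\Hom(E^{(j)},E^{(j')})=0$ for $j<j'$ and $\dim\Hom(E^{(j)},E^{(j')})=\chi(v(E^{(j)}),v(E^{(j')}))>0$ for $j>j'$ (the relevant $\Ext^1$ vanishes). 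Hence
$$\dim\End(E)=\sum_{j=1}^t\dim\End(E^{(j)})+\sum_{j>j'}\chi(v(E^{(j)}),v(E^{(j')})),$$
and for $t=1$ this becomes $\dim\End(E)=c+2\sum_{\alpha<\beta}\dim\Hom(E_\alpha,E_\beta)$, since each $v(E_\alpha)$ is a positive multiple $m_\alpha v_0$ with $\sum m_\alpha=c$ and $\dim\End(E_\alpha)=m_\alpha$, while the off-diagonal terms pair up by (b).

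The key input for $t\ge2$ is $\sum_{j>j'}\chi(v(E^{(j)}),v(E^{(j')}))\ge c$. Indeed, using $\sum_j v(E^{(j)})=c\,v_0$ and bilinearity, $\sum_{j>1}\chi(v(E^{(j)}),v(E^{(1)}))=-c\,\chi(v(E^{(1)}),v_0)$, and since $\nu_1>\mu_0$ the integer $\chi(v(E^{(1)}),v_0)$ is negative, hence $\le-1$, so this partial sum is $\ge c$ and a fortiori the full double sum is. With $\dim\End(E^{(j)})\ge1$ we get $\dim\End(E)\ge t+c$. Part~(i) follows: for $t=1$, $\dim\End(E)\ge c$ with equality iff $\dim\Hom(E_\alpha,E_\beta)=0$ for all $\alpha\ne\beta$, i.e. iff $E$ is a pairwise-orthogonal sum of indecomposables of slope $\mu_0$; for $t\ge2$, $\dim\End(E)\ge t+c\ge c+2>c$. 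Since $\rk\Pi_\phi=d-\dim\End(E)$ by Proposition~\ref{Poisson-rank-prop}, this is (i).

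For (ii) we need $\dim\End(E)=c+2$. If $t\ge3$, then $\dim\End(E)\ge t+c\ge c+3$, impossible. If $t=2$, then $\dim\End(E)=\dim\End(E^{(1)})+\dim\End(E^{(2)})+\chi(v(E^{(2)}),v(E^{(1)}))$, with $\chi(v(E^{(2)}),v(E^{(1)}))=c\,\chi(v(E^{(2)}),v_0)\ge c$; equality $c+2$ forces $\dim\End(E^{(1)})=\dim\End(E^{(2)})=1$ (so $E^{(1)},E^{(2)}$ are stable) and $\chi(v(E^{(2)}),v_0)=1$, which, relabelling $E_1:=E^{(2)}$, $E_2:=E^{(1)}$ and using $\chi(c\,v_0,v_0)=0$, is the first bullet. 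If $t=1$, then $\dim\End(E)=c+2$ iff exactly one unordered pair of summands $\{E_\alpha,E_\beta\}$ has $\dim\Hom(E_\alpha,E_\beta)=1$ and all other pairs are orthogonal. To identify this with the second bullet, I would use the Fourier--Mukai equivalence taking semistable slope-$\mu_0$ sheaves on $C$ to torsion sheaves: it sends an indecomposable of invariant $m\,v_0$ to a cyclic length-$m$ torsion module at a point, and between a length-$a$ module at $x$ and a length-$b$ one at $y$ the Hom-space has dimension $0$ if $x\ne y$ and $\min(a,b)$ if $x=y$; hence $\dim\Hom(E_\alpha,E_\beta)=1$ forces one of them stable, giving the second bullet, and since $\dim\End(E)$ is the endomorphism dimension of a length-$c$ torsion sheaf, which exceeds $c$ exactly when a stalk is non-cyclic, we need a stalk of length $\ge2$, so $c>1$. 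This covers the three cases.

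The step I expect to take real care, as opposed to routine bookkeeping, is the last one: controlling $\dim\Hom$ between indecomposable semistable bundles of slope $\mu_0$, in particular pinning down when it is exactly $1$. The Fourier--Mukai reduction to torsion sheaves turns this into a one-line statement about finite-length modules over a discrete valuation ring, and that is how I would present it; the rest is the displayed formula for $\dim\End$, Riemann--Roch, Serre duality, and (a)--(b).
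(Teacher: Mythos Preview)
Your proof is correct and follows essentially the same strategy as the paper: reduce to $\dim\End(E_\phi)$ via Proposition~\ref{Poisson-rank-prop}, use the equivalence of $SB(\mu_0)$ with torsion sheaves for the semistable part, and use the Riemann--Roch/$\chi$ bound for the unstable part. The only organizational difference is that you first group the indecomposable summands into slope blocks $E^{(j)}$ and work with those, whereas the paper orders the individual indecomposable summands $E_1,\ldots,E_m$ by slope and bounds $\dim\Hom(E_1,E_2\oplus\cdots\oplus E_m)$ directly; your grouping makes the $t\ge 3$ case and the equality analysis for $t=2$ slightly cleaner, but the content is identical.
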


\begin{proof}
(i),(ii) By Proposition \ref{Poisson-rank-prop}, we have to prove that $\dim \End(E_\phi)\ge c$ and to study the cases where
we have an equality and the cases where $\dim \End(E_\phi)=c+2$. 

Assume first that $E_\phi$ is indecomposable (and hence, semistable).
The abelian category of semistable bundles $SB(\mu_0)$ of slope $\mu_0$ is equivalent to the category of torsion sheaves in such a way
that stable bundles of slope $\mu_0$ are simple objects in $SB(\mu_0)$. Hence, a semistable bundle $F$ of slope $\mu_0$
has length $\ell$ in this category if and only if $v(F)=\ell\cdot v_0$. Since $v(E_\phi)=cv_0$, if $E_\phi$ is indecomposable it has
$\dim\End(E_\phi)=c$.

Now let 
$$E_\phi=E_1\oplus\ldots\oplus E_m,$$ 
where $m\ge 2$, each $E_i$ is indecomposable, and $\mu(E_1)\le \mu(E_2)\le\ldots\le \mu(E_m)$. 
Assume first that $E_\phi$ is semistable, and let $\ell_i$ be the length of $E_i$ in $SB(\mu_0)$.
We have
$$\sum_i \dim \End(E_i)=\sum_i \ell_i=\ell(E_\phi)=c.$$
Thus, $\dim \End(E_\phi)\ge c$ with equality precisely when $\Hom(E_i,E_j)=0$ for $i\neq j$.

Furthermore, if $\Hom(E_i,E_j)\neq 0$ for some $i\neq j$ then
$$\dim \Hom(E_i,E_j)=\dim \Hom(E_i,E_j)=\min(\ell_i,\ell_j).$$
Hence, if $\dim \End(E_\phi)=c+2$ then we can have at most one such pair and we should have $\min(\ell_i,\ell_j)=1$.

%so that $\mu(E_1)<\mu(E_i)$
%for some $i$.
Next, let us consider the case when $E_\phi$ is unstable.
Then there exists $i>1$ such that $\chi(v(E_1),v(E_i))>0$. Hence, $\chi(v(E_1),v(E))>0$. It follows
$$\dim\Hom(E_1,E_2\oplus\ldots E_m)\ge \chi(v(E_1),v(E_2)+\ldots+v(E_m))=\chi(v(E_1),cv_0)=c\chi(v(E_1),v_0)\ge c.$$
Therefore,
$$\dim \End(E_\phi)\ge \sum_{i=1}^m \dim\End(E_i)+\dim\Hom(E_1,E_2\oplus\ldots E_m)\ge 2+c.$$
Furthermore, the equality is possible only if $m=2$, both $E_1$ and $E_2$ are stable and $\chi(v(E_1),v_0)=1$.
\end{proof}

\begin{remark} The proof of Lemma \ref{rank-classification-lem}(i) also shows that in the case $r=d-1$ the Feigin-Odesskii bracket is identically zero.
\end{remark}

\begin{proof}[Proof of Proposition \ref{Poisson-degeneration-prop}]
It suffices to find a finite nonempty collection of irreducible closed subvarieties, $Z_1,\ldots,Z_n$, each birational to the product of $C$ with an affine space,
such that $\cup Z_i$ contains every point with $\rk \Pi_\phi=d-c-2$ and also at a generic point of each $Z_i$ we have $\rk \Pi_\phi=d-c-2$.

\medskip

\noindent
{\bf Step 1.}
First, let us fix a decomposition $cv_0=v_1+v_2$ in $\ZZ^2$, with $v_i=(d_i,r_i)$ and $r_i>0$, such that 
$$\chi(v_1,v_0)=1 \ \text{ and } \chi(v_2,v)=d_2-c>0.$$
For each such decomposition we will construct an irreducible subvariety $Z(v_2)$ in $\PP \Ext^1(\xi,\OO_C)$, which contains
all $\phi$ with $E_\phi\simeq \xi_1\oplus \xi_2$, where $\xi_1$ and $\xi_2$ are stable with $v(\xi_1)=v_1$, $v(\xi_2)=v_2$
(i.e., all points $\phi$ of the first type from Lemma \ref{rank-classification-lem}(ii)). Furthermore, we will check that a generic point of $Z(v_2)$
is a point of this type.

Let $\MM(v_2)$ denote the moduli space of stable bundles $F$ with $v(F)=v_2$ (note that $\MM(v_2)\simeq C$).
Let us consider the projective bundle $X\to \MM(v_2)$ with fiber over $\xi_2$ given by $\PP\Hom(\xi_2,\xi)$,
and let $X_0\sub X$ be the open subset corresponding to injective morphisms $\xi_2\to \xi$.
Over $X_0$ we have a projective bundle $Y\to X_0$ associated with the vector bundle
with fibers 
$$\ker(\Ext^1(\xi,\OO_C)\to \Ext^1(\xi_2,\OO_C))$$
(here we use the fact that this map of $\Ext^1$'s is surjective).
Note that $X_0$ and $Y$ are irreducible and $Y$ is birational to the product of $\MM(v_2)$ with an affine
space of dimension
$$\chi(v_2,v)-1+d-d_2-1=d-c-2.$$
We have an obvious morphism $Y\to \PP\Ext^1(\xi,\OO_C)$ and we denote by $Z(v_2)$ the closure of its image.
It is clear that the image of $Y$ consists of all $\phi$ which split over some embedding $\xi_2\to \xi$, with
$v(\xi_2)=v_2$.

Assume that $E_\phi\simeq \xi_1\oplus \xi_2$ where $\xi_i$ are stable and $v(\xi_i)=v_i$.
Then both components of the embedding $\OO_C\to \xi_1\oplus \xi_2$ are nonzero (otherwise the quotient
would be decomposable). Hence, the intersection of the image of $\OO_C$ with $0\oplus \xi_2$ is zero, which implies
that the composed map $\xi_2\to E_\phi\to \xi$ is an embedding. Since $\phi$ splits over $\xi_2\to \xi$,
we see that $\phi$ is contained in $Z(v_2)$.

For a generic point of $X_0$ the quotient $\xi/\xi_2$ will be semistable, for a generic point
of $Y$ the corresponding extension of $\xi/\xi_2$ by $\OO_C$ will be semistable with the vector $v_1$,
hence, stable. So the corresponding $E_\phi$ sits in an exact sequence
$$0\to \xi_2\to E_\phi\to \xi_1\to 0$$
which necessarily splits since $\mu(\xi_2)>\mu(\xi_1)$.

Now let us check that the map $Y\to Z(v)$ is birational. It is enough to check that if $E_\phi\simeq \xi_1\oplus \xi_2$ for some $\phi$, with $\xi_i$ as above, then there
is a unique $\xi'_2\in \MM(v_2)$ and a unique nonzero morphism $\xi'_2\to \xi$, up to rescaling, such that $\phi$ splits over this morphism.
But $\Hom(\xi'_2,\xi_1)=0$ and $\Hom(\xi'_2,\xi_2)\neq 0$ only when $\xi'_2=\xi_2$.
Furthermore, if $\phi$ splits over a morphism $\xi_2\to \xi$ then this morphism factors through $E_\phi$ and the statement follows from the fact
that $\dim \Hom(\xi_2,E_\phi)=1$.

\medskip

\noindent
{\bf Step 2.}
We claim that there exists at least one decomposition $cv_0=v_1+v_2$ as in Step 1. Indeed, 
assume first that $(r+1)/c>1$. Since $d/c$ and $(r+1)/c$ are relatively prime,
there exists a unique pair of integers $(r_1,d_1)$ with $0\le r_1<(r+1)/c$ such that
$$\frac{d}{c}\cdot r_1=d_1\cdot \frac{r+1}{c}+1.$$ 
Furthermore, we necessarily have $r_1>0$. We define $v_2$ as $cv_0-v_1$. 

Note that since $d>r+1$, $d$ cannot divide $r+1$, so $\frac{d}{c}>1$. In particular, we cannot have
$v_2=v$, so it enough to check the non-strict inequality $\chi(v_2,v)\ge 0$, which is equivalent to
%It remains to check the inequality $\chi(v_2,v)>0$, which is equivalent to
$$d_1\le d-c.$$ 
The inequality $r_1<(r+1)/c$ implies that
$d_1<\frac{d}{c}$, which gives the required inequality for $c=1$. For $c\ge 2$ we use in addition
$$\frac{d}{c}\le 2(\frac{d}{c}-1)\le c(\frac{d}{c}-1)=d-c.$$ 

In the remaining case $c=r+1$ we can just take $r_1=1$ and $d_1=\frac{d}{c}-1$.

\medskip

\noindent
{\bf Step 3.}
Assume that $c>2$. We will construct an irreducible subvariety $Z_0$ in $\PP \Ext^1(\xi,\OO_C)$ which contains
all the points $\phi$ of the second type described in Lemma \ref{rank-classification-lem}(ii). Furthermore, 
we will check that a generic point of $Z_0$ is a point of this type. 

First, we observe that for every point $\phi$ of the second type from Lemma \ref{rank-classification-lem}(ii),
there exists an embedding $E_1^{\oplus 2}\to E_\phi$ such that the quotient is semistable (of slope $\mu_0$).
Indeed, since $\Hom(E_1,E_2)\neq 0$, there exists an embedding $E_1\to E_2$ with the semistable quotient,
and the assertion follows.

Now let $X$ be the relative Grassmannians of $2$-planes in the bundle over $\MM(v_0)$ with the fiber $\Hom(\xi_0,\xi)$ over $\xi_0\in \MM(v_0)$.
Let us denote by 
$X_0\sub X$ the open subset consisting of $2$-planes $P\sub \Hom(\xi_0,\xi)$ such that the corresponding map $P\ot \xi_0\to \xi$ is
injective. Let $Y\to X_0$ denote the projectivization of the vector bundle
with fibers 
$$\ker(\Ext^1(\xi,\OO_C)\to \Ext^1(P\ot \xi_0,\OO_C)).$$
We have an obvious morphism $Y\to \PP\Ext^1(\xi,\OO_C)$ and we denote by $Z_0$ the closure of its image.

Assume that $E_\phi$ is of the second type from Lemma \ref{rank-classification-lem}(ii). Then we have an embedding $\xi_0^{\oplus 2}\to E_\phi$
such that the quotient is a nonzero semistable bundle $E'$ of slope $\mu_0$ (here we use the assumption $c>2$). We claim that the
composed map $\OO_C\to E'$ is nonzero. Indeed, otherwise we would have a nonzero map from $E_\phi/\OO_C\simeq \xi$
to $E'$ which is impossible since $\mu(E')=\mu_0<\mu(\xi)$. Thus, the composed map
$$\xi_0^{\oplus 2}\to E_\phi\to \xi$$
is injective, and we see that $\phi$ lies in the image of $Y$.

We claim that for a generic point of $Y$ the quotient $\xi/(P\ot \xi_0)$ is semistable and the corresponding extension $E'$ of $\xi/(P\ot \xi_0)$ by $\OO_C$ is also 
semistable. Hence, we get an exact sequence
$$0\to \xi_0^{\oplus 2}\to E_\phi\to E'\to 0$$
with $E'$ semistable of slope $\mu_0$. Furthermore, for a generic point we will have $\Hom(\xi_0,E')=0$ and $\dim\End(E')=c-2$, so the sequence will split
and $E_\phi$ will be of the second type from Lemma \ref{rank-classification-lem}(ii).

To see that the map $Y\to Z_0$ is birational, we first observe that if $\phi$ is such that $E_\phi=\xi_0^{\oplus 2}\oplus E'$
is of type from Lemma \ref{rank-classification-lem}(ii), then for any stable $\xi'_0$ of slope $\mu_0$ one has $\dim\Hom(\xi'_0,E_\phi)\le 1$ unless
$\xi'_0\simeq \xi_0$. Furthermore, the $2$-dimensional subspace of $\Hom(\xi_0,\xi)$ is recovered from $E_\phi$ as the image of the embedding
\begin{equation}\label{Hom-xi0-Ephi-map}
\Hom(\xi_0,E_\phi)\to \Hom(\xi_0,\xi).
\end{equation}

It is also easy to see that $Y$ is birational to $\AA^{d-5}\times C$.

\medskip

\noindent
{\bf Step 4}. Finally let us consider the case $c=2$. In this case for each of the $4$ nonisomorphic stable bundle $\xi_0$ with $v(\xi_0)=\mu_0$ 
such that $\det(\xi_0)^{\ot 2}\simeq \det(\xi)$, we define a rational subvariety $Z(\xi_0)\sub \PP \Ext^1(\xi,\OO_C)$ as follows.

Let $X(\xi_0)$ denote the Grassmannian of $2$-planes in $\Hom(\xi_0,\xi)$ and let $X_0(\xi_0)\sub X(\xi_0)$ be the open subset
consisting of $P$ such that the corresponding map $P\ot \xi_0\to \xi$ is surjective. In this case the kernel is necessarily isomorphic to $\OO$, so
we get a well defined map $X_0(\xi_0)\to \PP \Ext^1(\xi,\OO_C)$. We let $Z(\xi_0)$ be the closure of its image.

It is clear that the image of $X_0(\xi_0)$ consists precisely of points $\phi$ such that $E_\phi\simeq \xi_0^{\oplus 2}$.
As in Step 3, the point of the Grassmannian is recovered from $E_\phi$ as the image of the map \eqref{Hom-xi0-Ephi-map}.
\end{proof}

\begin{proof}[Proof of Theorem \ref{reconstr-thm}]
By Proposition \ref{Poisson-degeneration-prop}, the isomorphism class of a variety $Z$, and hence a birational class of $\AA^m\times C$ is determined by the
Poisson structure. Namely, $Z$ is the closure of the set of points where the rank of the Poisson structure drops by $2$ compare to the generic rank.
But it is well known that $\AA^m\times C$ and $\AA^n\times C'$ can be birational only if $C\simeq C'$.
\end{proof}

\section{Shifted Poisson moduli stacks with singular source}\label{singular-source-sec}

Throughout this section we fix a base commutative Noetherian ring $k$ of residue characteristic 0. All stacks and schemes are over $k$ unless we specify otherwise. 
We call a $k$-scheme $X$ flat, proper or projective if the 
structure morphism $X\to \Spec k$ is such.

For the basics on derived symplectic and Poisson geometry, we refer to Section 1 of \cite{PTVV} and Section 2, 3 of \cite{HP1}.

\subsection{$\OO$-orientations}

Let us recall one of the main results in \cite{PTVV}. 

\begin{theorem}(Theorem 2.5 \cite{PTVV})\label{transgression}
%Let $k$ be a commutative Noetherian ring of residue characteristic 0 and
Let $F$ be a locally geometric derived stack locally of finite presentation over $k$ equipped with an $n$-shifted symplectic form $\omega$. Let $X$ be an $\cO$-compact derived
stack over $k$ equipped with an $\cO$-orientation $[X]: C(X,\cO_X)\to k[-d]$ of degree $d$. Assume that the derived mapping stack
$\Map(X,F)$ is itself locally geometric and locally of finite presentation over $k$. Then $\Map(X,F)$ carries a canonical $(n-d)$-shifted symplectic 
structure.
\end{theorem}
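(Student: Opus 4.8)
The plan is to follow the transgression argument of \cite{PTVV}. Write $M=\Map(X,F)$ and let $\mathrm{ev}\colon X\times M\to F$ denote the evaluation morphism. Since $F$ is locally geometric and locally of finite presentation over $k$, the $n$-shifted symplectic form is an element $\omega\in\cA^{2,cl}(F,n)$ of the space of $n$-shifted closed $2$-forms, and pulling back along $\mathrm{ev}$ gives $\mathrm{ev}^*\omega\in\cA^{2,cl}(X\times M,n)$. The point is then to ``integrate over $X$'' and land in $\cA^{2,cl}(M,n-d)$, where the hypothesis that $M$ is itself locally geometric and locally of finite presentation makes both sides well behaved.

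First I would construct the transgression. Using $\mathbb{L}_{X\times M}\simeq p_X^*\mathbb{L}_X\oplus p_M^*\mathbb{L}_M$ one obtains a Künneth-type identification of the de Rham complex $\mathrm{DR}(X\times M)$ (and of its graded mixed structure) in terms of $\mathrm{DR}(X)$ and $\mathrm{DR}(M)$; composing with the weight-$0$ augmentation $\mathrm{DR}(X)\to\cA^0(X)=C(X,\cO_X)$ — which is a map of graded mixed complexes because the mixed differential strictly raises weight — and then applying the $\cO$-orientation $[X]\colon C(X,\cO_X)\to k[-d]$, one gets for every $p$ a natural ``integration'' morphism
$$\textstyle\int_{[X]}\colon\ \cA^{p,cl}(X\times M,n)\ \longrightarrow\ \cA^{p,cl}(M,n-d).$$
Here $\cO$-compactness of $X$ guarantees that $C(X,\cO_X)$ and the complexes that appear are perfect, so that Künneth and the pairing $[X]$ make sense. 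Set $\omega_M:=\int_{[X]}\mathrm{ev}^*\omega$; this is by construction a \emph{closed} $(n-d)$-shifted $2$-form on $M$, closedness being automatic since the whole construction is performed on the closed-forms complexes rather than on their degree-$0$ truncations.

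The remaining point — non-degeneracy of $\omega_M$ — is where non-degeneracy of $\omega$ and the orientation property of $[X]$ are combined. It suffices to show that $\omega_M^\flat\colon\mathbb{T}_M\to\mathbb{L}_M[n-d]$ is an equivalence, which may be checked at each point $f\colon X\to F$ (both complexes being perfect). Since $X$ is $\cO$-compact, deformation theory of mapping stacks gives $\mathbb{T}_{M,f}\simeq C(X,f^*\mathbb{T}_F)$, and by the finiteness hypotheses this is perfect, so $\mathbb{L}_{M,f}\simeq\mathbb{T}_{M,f}^\vee$. Unwinding the definition of $\int_{[X]}$, the map $\omega_M^\flat|_f$ is identified with the composite
$$C(X,f^*\mathbb{T}_F)\ \xrightarrow{\ C(X,f^*\omega^\flat)\ }\ C(X,f^*\mathbb{L}_F)[n]\ \xrightarrow{\ \sim\ }\ C(X,f^*\mathbb{T}_F)^\vee[n-d]=\mathbb{L}_{M,f}[n-d],$$
whose first arrow applies $C(X,-)$ to the equivalence $f^*\omega^\flat\colon f^*\mathbb{T}_F\xrightarrow{\sim}f^*\mathbb{L}_F[n]$ coming from non-degeneracy of $\omega$, and whose second arrow is the Serre-type duality equivalence $C(X,f^*\mathbb{L}_F)\simeq C(X,f^*\mathbb{T}_F)^\vee[-d]$ which expresses precisely the fact that $[X]$ is an $\cO$-orientation of degree $d$. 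Both arrows are equivalences, hence so is $\omega_M^\flat|_f$; this proves $\omega_M$ is non-degenerate, i.e.\ an $(n-d)$-shifted symplectic structure. For the merely locally geometric case one checks that all of the above — closed forms, (co)tangent complexes, and the transgression — is compatible with writing $M$ and $F$ as filtered colimits of geometric stacks, reducing to the case of Artin stacks locally of finite presentation.

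I expect the main obstacle to be the second step, in two parts: (a) building $\int_{[X]}$ honestly as a morphism of the \emph{closed}-forms spaces, i.e.\ at the level of graded mixed complexes, so that it preserves closedness for free; and (b) verifying the pointwise identification of $\omega_M^\flat$ displayed above — that integrating a $2$-form against the orientation is compatible with the duality on $C(X,-)$ induced by $[X]$. Both are essentially diagram chases, but they are the technical heart of the statement; once they are in place, the equivalence of the composite map above, hence the conclusion, is formal.
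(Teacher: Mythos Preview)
The paper does not give its own proof of this theorem: it is stated as a recalled result from \cite{PTVV} (``Let us recall one of the main results in \cite{PTVV}'') and is used as a black box in what follows. So there is nothing in the paper to compare your argument against.

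That said, your sketch is essentially the original transgression argument of \cite{PTVV}: pull back the closed $2$-form along the evaluation map, integrate over $X$ using the $\cO$-orientation at the level of graded mixed complexes (so that closedness is preserved), and then verify non-degeneracy pointwise by identifying $\mathbb{T}_{M,f}\simeq C(X,f^*\mathbb{T}_F)$ and factoring $\omega_M^\flat|_f$ as the composite of $C(X,f^*\omega^\flat)$ with the Serre-type duality coming from the orientation. Your identification of the two technical points (a) and (b) is accurate; these are exactly the places where \cite{PTVV} does the work. Nothing in your outline is wrong, and nothing substantive is missing beyond the bookkeeping you flag.
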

The definition of being $\cO$-compact can be found in Definition 2.1 of \cite{PTVV}. Any quasi-projective scheme is $\cO$-compact. By definition
$C(X,\cO_X)$ is defined to be $\RHom(\cO_X,\cO_X)$, which can be represented by the Cech complex computing cohomology of $\cO_X$.

\begin{definition}\label{O-ori}
Let $X$ be an $\cO$-compact derived stack and $d\in\ZZ$. An {\it $\cO$-orientation of degree $d$} on $X$ consist of a morphism of complexes
\[
[X]: C(X,\cO_X)\to k[-d],
\] such that for any $A\in \cdga_k^{\leq 0}$ and any perfect complexes $E$ on $X_A:=X\times \Spec A$, the morphism 
\[
C(X_A,E)\to C(X_A,E^\vee)^\vee[-d]
\] induced by 
\[
[X_A]:=[X]\ot id: C(X_A,\cO_{X_A})\simeq C(X,\cO_X)\ot_k  A\to A[-d]
\]
is a quasi-isomorphism of $A$-dg-modules.
\end{definition}
\begin{definition}
Let  $X$ be a projective scheme over $\Spec k$. We call $X$ is \emph{Gorenstein Calabi-Yau $d$-fold} if 
\begin{enumerate}
\item[$(1)$] the dualizing complex $\omega_X$ is invertible;
\item[$(2)$] there is an isomorphism $\cO_X\cong \omega_X$;
\item[$(3)$] $X$ is connected.
\end{enumerate}
\end{definition}

\begin{lemma}\label{Gor-O-abs}
Let $X$ be a Gorenstein Calabi-Yau $d$-fold. Then $X$ admits an $\cO$-orientation.
\end{lemma}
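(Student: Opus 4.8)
The plan is to construct $[X]$ from the Grothendieck--Serre trace map and to deduce the defining property of an $\OO$-orientation from relative Grothendieck--Serre duality for the projection $X_A\to\Spec A$. Concretely: since $X$ is projective and Gorenstein of pure dimension $d$, its dualizing complex has the form $\omega_X^\bullet\simeq\omega_X[d]$ with $\omega_X$ a line bundle, and the Calabi--Yau hypothesis amounts to a chosen trivialization $\omega_X\simeq\OO_X$, so that $\omega_X^\bullet\simeq\OO_X[d]$. Grothendieck--Serre duality for $f\colon X\to\Spec k$ supplies a trace map $\tr_X\colon R\Gamma(X,\omega_X^\bullet)\to k$, and composing with the trivialization we set
\[
[X]\colon C(X,\OO_X)=R\Gamma(X,\OO_X)\xrightarrow{\ \sim\ }R\Gamma(X,\omega_X)[d]\xrightarrow{\ \tr_X[d]\ }k[-d].
\]
On cohomology this is the classical Serre duality trace $H^d(X,\OO_X)\xrightarrow{\sim}H^d(X,\omega_X)\to k$.

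Now fix $A\in\cdga_k^{\leq0}$ and a perfect complex $E$ on $X_A=X\times\Spec A$, with projections $\pi_A\colon X_A\to\Spec A$ and $q_A\colon X_A\to X$; since $\Spec A$ is affine, $C(X_A,E)\simeq R\pi_{A*}E$ as a complex of $A$-modules. Because $k$ is a field, $q_A$ is flat, so the relative dualizing complex of $\pi_A$ is $\omega_{X_A/A}^\bullet\simeq q_A^*\omega_X^\bullet\simeq\OO_{X_A}[d]$, and the relative trace $R\pi_{A*}\omega_{X_A/A}^\bullet\to A$ is obtained from $\tr_X$ by base change along $\Spec A\to\Spec k$; under the identification $C(X_A,\OO_{X_A})\simeq C(X,\OO_X)\ot_k A$ its $(-d)$-shift is precisely $[X_A]=[X]\ot\id_A$. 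By relative Grothendieck--Serre duality for the proper morphism $\pi_A$ (a base change of $f$, with relative dualizing complex $\OO_{X_A}[d]$ because $X$ is Gorenstein and flat over the field $k$), using in addition $\RsHom_{\OO_{X_A}}(E,\OO_{X_A})\simeq E^\vee$ (valid since $E$ is perfect), the canonical pairing
\[
C(X_A,E)\ot^{\mathbf L}_A C(X_A,E^\vee)\to C(X_A,E\ot^{\mathbf L}_{\OO_{X_A}}E^\vee)\to C(X_A,\OO_{X_A})\xrightarrow{\ [X_A]\ }A[-d]
\]
is a perfect pairing, i.e.\ its adjoint $C(X_A,E)\to C(X_A,E^\vee)^\vee[-d]$ is a quasi-isomorphism of $A$-dg-modules. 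This adjoint is exactly the morphism of Definition \ref{O-ori}, so $[X]$ is an $\OO$-orientation of degree $d$.

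The one point that needs genuine care is the identification used above: that the Grothendieck--Serre duality isomorphism for $\pi_A$ is the one adjoint to the evaluation pairing composed with the relative trace, and that this relative trace equals $[X]\ot\id_A$. The former is the standard characterization of the duality isomorphism as the adjoint of the trace; the latter is compatibility of the trace with the flat base change $\Spec A\to\Spec k$, which is exactly where one uses that $k$ is a field, so that $A$ is $k$-flat and the base-change formula for the twisted inverse image $\pi_A^!$ introduces no higher derived correction terms. The remaining steps --- rewriting shifts, identifying $\RsHom_{\OO_{X_A}}(E,\OO_{X_A})$ with $E^\vee$, and the K\"unneth/projection map used to build the pairing --- are formal.
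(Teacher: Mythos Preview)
Your construction of $[X]$ matches the paper's: both use the trivialization $\eta\colon\OO_X\cong\omega_X$ together with the trace $H^d(\omega_X)\to k$ coming from the dualizing complex. The difference lies in how you verify the non-degeneracy condition for an arbitrary $A\in\cdga_k^{\le 0}$.

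You invoke relative Grothendieck--Serre duality directly for the projection $\pi_A\colon X_A\to\Spec A$, where $\Spec A$ is a \emph{derived} affine scheme, and conclude that the induced pairing is perfect. This is correct, but it appeals to duality in the derived setting (as in Lurie or Gaitsgory--Rozenblyum), which is a fairly heavy piece of machinery. The paper instead performs a reduction to classical Grothendieck duality: it forms the Cartesian square with $\Spec H^0A\to\Spec A$, observes that both $u_*\cE$ and $\RHom_A(u_*(\cE^\vee),A[-d])$ are perfect $A$-modules, and uses a Nakayama-type argument (a perfect $A$-module $M$ is acyclic iff $M\otimes^{\mathbf L}_A H^0A$ is acyclic) to reduce to checking the analogous map over the ordinary ring $H^0A$. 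There, classical Grothendieck duality for the scheme morphism $X_{H^0A}\to\Spec H^0A$ finishes the argument.

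So your route is shorter and conceptually clean, but it outsources the derived content to a black box; the paper's route is longer but stays within classical duality theory, at the cost of the extra base-change and perfectness reduction. Both are valid.
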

\begin{proof}
Because $X$ is Gorenstein, the dualizing complex $\omega_X$ is quasi-isomorphic to an invertible sheaf. A Calabi-Yau structure corresponds to a trivialization $\eta: \cO_X\cong \omega_X$. Let $\cE$ be a perfect complex on $X$. Denote $(\cA_\bullet,d)$ for total complex of the sheaf endomorphism complex $\underline{\Hom}(\cE,\cE)$. Then $\cA_0=\bigoplus_i \ul{\Hom}(\cE^i,\cE^i)$. Denote by
\[
\tau^\p: \cA_0\to \cO_X
\] the (super)trace morphism. We extend $\tau^\p$ to a morphism from $\cA_\bullet$ to $\cO_X$ by pre-compose it with the natural projection. Define $\tau$ to be the composition $\eta\circ \tau^\p$. Clearly, $\tau^\p\circ d=0$.
The canonical trace morphism $H^d(\omega_X)\to k$ (from the definition of dualizing complex), together with the CY structure $\eta$, defines the desired morphism 
\[
[X]: C(X,\cO_X)\to k[-d].
\] 

Now we consider the case when the base is an affine derived scheme. Given $A\in \cdga_k^{\geq 0}$, denote by $X_A$ the product $X\times_k \Spec A$. Let $\cE$ be a perfect complex on $X_A$. We have a cartesian diagram of derived schemes
\[
\xymatrix{
X_{H^0A}\ar[d]^v \ar[r]^j & X_A\ar[d]^u\\
\Spec H^0A\ar[r]^i  & \Spec A
}
\]
%\begin{diagram}
%X_{H^0A}&\rTo{j}& X_A\\
%\dTo{v}&&\dTo{u}\\
%\Spec H^0A&\rTo{i}& \Spec A
%\end{diagram}
where $X_{H^0A}:=X\times_k \Spec H^0(A)$.
By the base change formula of derived schemes (Prop 1.4 \cite{Toenbasechange}), there is an equivalence
\[
i^*u_*\cF\simeq v_*j^*\cF
\] for any quasi-coherent complex $\cF$ on $X_A$. All functors are derived. Take $\cF=\cE\ot \cE^\vee$. We need to check that the morphism
\[
\eta_\cE: u_*\cE\to \RHom_A(u_*(\cE^\vee), A[-d])
\] is an isomorphism in $\D(A)$. We claim that it is equivalent to show that 
\[
i^*(\eta_\cE): i^*u_*\cE\to i^*\RHom(u_*(\cE^\vee), A[-d])
\] is an isomorphism in $\D(H^0A)$. Because $u$ is proper and flat, both $u_*\cE$ and $\RHom(u_*(\cE^\vee), A[-d])$ are perfect $A$-modules. It suffices to show a perfect $A$-module $M$ is acyclic if and only if $i^*M$ is acyclic. Because $M$ is perfect and $A$ is nonpositively graded, there exists $n$ such that $H^i(M)=0$ for $i>n$. By spectral sequence, 
\[
H^n(M)=H^n(M\ot_A H^0A)=H^n(i^*M)=0.
\] By induction, $M$ is acyclic. The claim is proved.

By base change, $i^*(\eta_\cE)$ is isomorphic to the morphism
\[
\eta_{j^*\cE}: v_*j^*\cE\to \RHom_{H^0A}(v_*(j^*\cE^\vee),H^0A[-d]),
\] induced by the bilinear map
\[
j^*\cE\ot j^*(\cE^\vee)\to \omega_v\cong\cO_{X_{H^0A}}.
\]
Then $\eta_{j^*\cE}$ is an isomorphism in $\D(H^0A)$ by Grothendieck duality for the scheme morphism $v$.
\end{proof}

\subsection{Shifted Poisson structure on the moduli of complexes}
We briefly recall the construction of moduli stack of complexes following \cite[Section 2]{HP-Bos}. The basics on graded mixed objects can be found in \cite[Section 1]{CPTVV}.  Those readers who are familar with \cite{CPTVV} can skip the first two pages and read Theorem \ref{Rperfalgebraic} directly. 

Let $k$ be a Noetherian commutative ring. Let $C(k)$ be the category of unbounded dg-$k$-modules with the standard model structure, where weak equivalences are quasi-isomorphisms and fibrations are epimorphisms of cochain complexes. Let $M$ be a symmetric monoidal model category with a $C(k)$-enrichment.

A \emph{graded mixed object} in category $M$ is  a $\ZZ$-family of $\{E(p)\}_{p\in\ZZ}$ of objects in $M$ together with morphisms in $M$
\[
\ep=\{\ep_p: E(p)\to E(p+1)[1]\}_{p\in\ZZ}
\] where $[1]$ is the shift functor defined by the $C(k)$-enrichment, and $\ep^2=0$. We write $(E,\ep)$ for the family together with the differential. A morphism 
\[
f: (E,\ep)\to (F,\ep)
\] is a family of maps $\{f(p): E(p)\to F(p)\}_{p\in\ZZ}$ in $M$ that commutes with $\ep$. We call a graded mixed object in $M$ \emph{bounded} if $E(p)=0$ except for finitely many $p$. Denote the category of graded mixed objects in $M$ by $\ep-M^\gr$. 

The category $M^\gr:=\prod_{p\in\ZZ} M$ is naturally a symmetric monoidal model category enriched in $C(k)$, inherited from $M$. There is a forgetful functor
\[
\ep-M^\gr\to M^\gr
\] forgetting the $k[\ep]$-structure. Equip $\ep-M^\gr$ with the symmetric monoidal model structure through the forgetful functor. Given a triangulated dg category $T$, following \cite{TV07} we denote the category of perfect (or compact) objects by $T_{pe}$. Suppose  $M$ is triangulated and admits arbitrary coproduct. An object of $\ep-M^\gr$ is called \emph{perfect} if it is a compact object in $M^\gr$. Denote by $\ep_{pe}-M^\gr$ the subcategory of $\ep-M^\gr$ consisting of perfect objects.

Let $E, F$ be two mixed graded objects.
We define the external hom by
\[
\Hom^{\NN}_\ep(E,F):=\prod_{p\in\ZZ}\Big(\Hom^{\NN}_\ep(E,F)(p)\Big)
\] where 
\[
\Hom^{\NN}_\ep(E,F)(p)=\prod_{q\in\NN}\Hom_{M}(E(q),F(q+p)).
\] The differential 
\[
\ep(p): \Hom^{\NN}_\ep(E,F)(p)\to \Hom^{\NN}_\ep(E,F)(p+1)[1]
\] is defined by the adjoint action of $\ep$ on $E$ and $F$. This defines a $C(k)$-enrichment of $\ep-M^\gr$ and the forgetful functor $\ep-M^\gr\to M^\gr$ is $C(k)$-enriched.

\begin{example}\label{pq}
When $M=C(k)$, denote the stack of perfect objects in $C(k)$ by $\dPer$, the stack of objects in $\ep_{pe}-C(k)^\gr$ by $\dePer$ the stack of perfect objects in $C(k)^\gr$ by $\dPer^\ZZ_b$. The lower index $b$ stands for bounded. We have stack morphism
\begin{equation}\label{CD:1}
\xymatrix{
 &\dePer\ar[rd]^q\ar[ld]_p\\
 \dPer^\ZZ_b  & & \dPer
}
\end{equation} where $p$ is induced by the forgetful functor and $q$ is induced by the functor taking the total complex.
Denote by $\bCx$ the subcategory of $\ep_{pe}-C(k)^\gr$ where $E(p)$ has perfect amplitude $[0,0]$ for all $p\in\ZZ$, and by $\dCx$ the associate stack of objects. Then the above diagram restricts to 
\begin{equation}\label{CD:2}
\xymatrix{
 &\dCx\ar[rd]^q\ar[ld]_p\\
 \RR\Vt^\ZZ_b  & & \dPer
}
\end{equation} where $\Vt$ is the stack of vector bundles. In a seminal paper \cite{TV07}, Toen and Vaquie have proved that $\dPer$ is a locally geometry stack locally of finite presentation over $k$. The same holds for $\dPer^\ZZ_b$ and $\RR\Vt^\ZZ_b$.
\end{example}

Let $X$ be a flat projective $k$-scheme and $M=\Qcoh(X)$ be the category of quasi-coherent complexes on $X$. An object of $\ep_{pe}-M^\gr$ is a graded mixed complexes of quasi-coherent complexes $\{E(p)\}_{p\in\ZZ}$ where $E(p)$ is a perfect complex on $X$ for all $p \in\ZZ$ and $E(p)=0$ except for finitely many $p$. Denote by $\bCx(X)$ the subcategory of $\ep_{pe}-M^\gr$ where $E(p)$ has perfect amplitude $[0,0]$ for all $p\in\ZZ$. Objects of $\bCx(X)$ are simply bounded complexes of vector bundles (since we have assumed that $X$ is projective). 

\begin{theorem}\cite[Lemma 2.4, Proposition 7.3, Theorem 2.3]{HP-Bos}\label{Rperfalgebraic}
Let $X$ be a flat projective $k$-scheme. Denote by $\dePer(X)$ the stack of objects in $\ep_{pe}-\Qcoh(X)^\gr$ and $\dCx(X)$ the stack of objects of $\bCx(X)$. Then there is an equivalence of stacks
\[
\dePer(X)\simeq \Map\Big(X,\dePer\Big)\simeq \Map\Big(X, \Map([\AA^1\Big/\GG_m], \dPer)\Big)
\] where $\Map$ is the internal hom of the category of (derived) stacks. As a consequence, $\dePer$, $\dCx$, $\dePer(X)$ and $\dCx(X)$ are locally geometric stacks locally of finite presentation over $k$.
\end{theorem}

\begin{lemma}\label{relativesymplectic}
Let $k$ be a commutative Noetherian ring of residue characteristic 0 and $X$ be a Gorenstein Calabi-Yau $d$-fold (over $\Spec k$). Then for a given isomorphism $\cO_X\cong \omega_X$, $\dPer(X)$ admits a canonical $(2-d)$-shifted symplectic structure.
\end{lemma}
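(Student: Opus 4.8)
The plan is to realize $\dPer(X)$ as a derived mapping stack and apply the relative version of the PTVV transgression theorem (Theorem \ref{transgression}). First I would recall that $\dPer(X) = \dPer(X/k) = \Map(X,\dPer)$, which by Lemma \ref{Rperfalgebraic} is locally geometric and locally of finite presentation over $k$ (here we use that $X$ is proper and flat). Next, the target $\dPer$ carries a canonical $2$-shifted symplectic structure: this is the standard shifted symplectic structure on the moduli of perfect complexes, coming from the trace pairing on the tangent complex $\RHom(E,E)[1]$, which is a classical input from \cite{PTVV} (and holds over our Noetherian base $k$ of residue characteristic $0$ by the same argument). So $F=\dPer$ is equipped with an $n$-shifted symplectic form with $n=2$.

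The second ingredient is the $\cO$-orientation on the source. Since $X$ is projective and relative connected Gorenstein Calabi-Yau of dimension $d$ over $k$ with the chosen trivialization $\cO_X\cong\omega_X$, Lemma \ref{Gor-O-abs} (together with its derived-base extension, exactly as proved there using Toën's base change and Grothendieck duality) provides an $\cO$-orientation $[X]\colon C(X,\cO_X)\to k[-d]$ of degree $d$. Note $X$ is $\cO$-compact since it is quasi-projective. With $F=\dPer$ ($n=2$-shifted symplectic), $X$ $\cO$-compact with an $\cO$-orientation of degree $d$, and $\Map(X,\dPer)=\dPer(X)$ locally geometric and locally of finite presentation, Theorem \ref{transgression} immediately yields a canonical $(2-d)$-shifted symplectic structure on $\dPer(X)$.

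The one point requiring a brief check is that all hypotheses of Theorem \ref{transgression} are met over a general Noetherian base ring $k$ of residue characteristic $0$ rather than a field: local geometricity and finite presentation of $\dPer(X)$ is Lemma \ref{Rperfalgebraic}, and the $\cO$-orientation statement has been set up precisely in this generality in Lemma \ref{Gor-O-abs}. I expect the main (minor) obstacle to be confirming that the PTVV transgression construction and the $2$-shifted symplectic structure on $\dPer$ go through verbatim over such a base; this is routine since all the relevant constructions (trace pairings, the differential forms complex, closedness) are compatible with base change and the arguments of \cite{PTVV} do not use that $k$ is a field beyond formal manipulations valid over any commutative ring in which the relevant denominators are invertible. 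Everything else is a direct application of the cited results.
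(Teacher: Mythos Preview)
Your proposal is correct and follows essentially the same approach as the paper: identify $\dPer(X)=\Map(X,\dPer)$, use the $2$-shifted symplectic structure on $\dPer$ from \cite{PTVV}, invoke Lemma \ref{Rperfalgebraic} for local geometricity, produce an $\cO$-orientation on $X$, and apply Theorem \ref{transgression}. The only discrepancy is that you cite Lemma \ref{Gor-O-abs} (stated only for a field $k$) for the $\cO$-orientation, whereas the paper invokes Lemma \ref{O-orientation-family} with $B=\Spec k$, which is precisely the generalization you allude to when you say ``together with its derived-base extension''; citing the latter lemma directly would make your argument cleaner and avoid the need for that parenthetical justification.
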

\begin{proof}
By Proposition 3.7 of \cite{TV07}, $\dPer$ is locally geometric and locally of finite presentation over $k$. It  admits a canonical 2-shifted symplectic structure by Theorem 2.12 \cite{PTVV}.  Since $X$ is projective over $k$, it is $\cO$-compact. Applying Lemma \ref{Gor-O-abs}, the isomorphism $\cO_X\cong \omega_X$ defines an $\cO$-orientation. By Lemma \ref{Rperfalgebraic}, $\dPer(X)$ is locally geometric and locally of finite presentation over $k$. Finally by Theorem \ref{transgression}, $\dPer(X)=\Map(X,\dPer)$ admits a canonical $(2-d)$-shifted symplectic structure.
\end{proof}
\begin{remark}
Since $\dVt(X)$, the stack of vector bundles on $X$, is an open substack of $\dPer(X)$, it inherits the symplectic structure on $\dPer(X)$. Since $\dPer^\ZZ_b(X)$ is locally a  finite direct product of $\dPer(X)$, therefore is also canonically symplectic. The same holds for $\RR\Vt^\ZZ_b(X)$.
\end{remark}

The following result is a version of Theorem 3.17 of \cite{HP1} for not necessarily smooth Calabi-Yau families.

\begin{theorem}\cite[Theorem 3.4]{HP-Bos}\label{existence-relative}
Let $k$ be a Noetherian commutative ring of residue charactersitic zero and
$X$ be a Gorenstein Calabi-Yau $d$-fold over $\Spec k$.
Given a trivialization $\cO_X\cong \omega_X$, the moduli stack $\dCx(X)$ admits a canonical $(1-d)$-shifted Poisson structure. 
\end{theorem}

We refer to \cite{PTVV, CPTVV} for the definitions of a shifted symplectic and a shifted Poisson structure on a derived stack. The Poisson structure in Theorem \ref{existence-relative} is indeed constructed via Lagrangian structure (see \cite{PTVV, CPTVV, HP1}) using the following result of Melani and Safronov. 

\begin{theorem}\cite[Theorem 4.22]{MS2}\label{thm:MS}
Suppose $\sX, \sY$ are locally geometric stacks locally of finite presentation.
Let $f:\sX\to \sY$ be a stack morphism. Supose that $\sY$ is equipped with an $n$-shifted symplectic form $\omega$ and $f$ is Lagrangian. Then $\sX$ is equipped with a canonical $(n-1)$-shifted Poisson structure.
\end{theorem}

\begin{proof}[Proof of Theorem \ref{existence-relative}]
By Theorem \ref{Rperfalgebraic},  the commutative diagrams \eqref{CD:1} and \eqref{CD:2} are diagrams of morphisms of locally geometric stacks locally of finite presentation, for which the notion of Lagrangian morphism is well defined.  It is proved in \cite[Theorem 3.13]{HP1} that 
\[
(p,q): \dCx : \dVt^\ZZ\times \dPer
\]
is a Lagrangian correspondence (see \cite[Appendix A]{HP-Bos} for a different proof for $\dePer$ via boundary structure, \cite[Definition 2.8]{Ca14}). By Theorem \ref{Rperfalgebraic}, Lemma \ref{Gor-O-abs} and transgression of Lagrangian structure (c.f. \cite[Theorem 2.10]{Ca14}), we produce a canonical $(1-d)$-shifted Poisson structure on $\dCx(X)$.
\end{proof}

\begin{remark}
A key feature of the Poisson structure in Theorem \ref{existence-relative} is that its weight 2 component is induced by an explicit morphism between certain complexes of coherent sheaves, whose hypercohomology cochain complexes are quasi-isomorphic to the tangent and cotangent complex of $\dCx(X)$. The formula for this morphism can be found in \cite[Theorem 4.7]{HP1} and \cite[Section 3.2]{HP-Bos}.
\end{remark}

Since in the application we need to consider relative moduli stack over a base $B$ that is not necessary affine, we make the following definition.  Let $f:X\to B$ be a scheme morphism where $B$ is a Noetherian scheme of finite type. Denote by $\dPer(X/B)$ the stack of perfect complexes on $X$ that are also $B$-perfect. Similarly, we define $\RR\Vt(X/B)$, $\dePer(X/B)$ and $\dCx(X/B)$. In this paper we only consider those $f$ that are flat and projective. In this case, we indeed have $\dPer(X/B)\simeq \dPer(X)$ and an analogue holds for $\RR\Vt(X/B)$, $\dePer(X/B)$ and $\dCx(X/B)$. However, we keep the relative notations to emphasize that we are in the relative situation.

%Let $f:C\to S$ be a flat family of projective connected Gorenstein curves of arithmetic genus one such that $\omega_{C/S}=f^*L_S$ for some line bundle $L_S$. Let $\wt{S}$ be the $\GG_m$-torsor of $L_S$. Consider the base change diagram
%\[\xymatrix{
%\wt{C}\ar[r]\ar[d]^{\wt{f}} & C\ar[d]^f \\
%\wt{S}\ar[r]^p & S }
%\] where $p$ is the natural projection. The base change $\wt{C}$ is the family $C$ together with a Calabi-Yau structure.
%In particular, $\omega_{\wt{C}/\wt{S}}\cong \cO_{\wt{C}}$ and $\wt{f}$ satisfies the condition of Theorem \ref{O-orientation-family}, and therefore it admits an $\cO$-orientation. Every two $\cO$-orientations differ by multiplication with a pullback of a nowhere vanishing function on $\wt{S}$.

\section{Relative Poisson structures from families of CY-curves}\label{CY-curves-sec}

\subsection{Relative Poisson structure on the relative moduli spaces of complexes}

We say that $\pi:C\to S$ is a {\it family of Gorenstein CY-curves} if $\pi$ is flat projective morphism with connected geometric fibers that are Gorenstein
of dimension $1$, such that for the relative dualizing sheaf we have
$\om_{C/S}\simeq \pi^*L_S$ for some line bundle $L_S$ on $S$.

We can consider the associated relative moduli stack of complexes $\dCx(C/S)$.
For a subset $I\sub{\mathbb Z}$, an object $F\in \perf(C)$, and a collection of vector bundles $(V_i)_{i\in I}$ on $C$,
we consider the substack $\dCx(C/S;F,(V_i)_{i\in I})$ corresponding to complexes $V_\bullet$ with fixed $i$th term given by $V_i$ for $i\in I$,
and a fixed isomorphism $V_\bullet\simeq F$ in the derived category (this substack is defined as a derived fibered product, see \cite[Cor.\ 3.20]{HP1}).

\begin{prop}\label{rel-moduli-Poisson-prop} 
Let $\cM\to S$ be an open substack in $\dCx(C/S,F,(V_i)_{i\in I})$ such that
$\cM$ admits a relative coarse moduli $\cM\to M\to S$, such that $p:M\to S$ is smooth, and $\cM\to M$ is a $\GG_m$-gerbe
(in particular $\cM$ has trivial derived structure). 
Then there exists a global section $\Pi\in \bigwedge^2 T_{M/S}\ot p^*L_S$ such that for every point $s\in S$,
the bivector $\Pi_s$ on the fiber $M_s$ is the Poisson structure induced by $0$-shifted Poisson structure on $\cM_s$.
\end{prop}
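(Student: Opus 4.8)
The plan is to deduce this relative statement from the absolute one established via Theorem \ref{existence-relative} together with the general yoga of how a $0$-shifted Poisson structure on a gerbe over a smooth scheme descends to an honest bivector. First I would recall that by Theorem \ref{existence-relative}, applied with the affine base $S$ in place of $B$ and the trivialization $\om_{C/S}\simeq \pi^*L_S$ (which after twisting by $p^*L_S^{-1}$ can be fed into the statement, or more invariantly handled by keeping track of the line bundle), the stack $\dCx(C/S)$ carries a canonical $(1-d)=0$-shifted Poisson structure since $d=1$. This restricts to the open substack $\cM=\dCx(C/S;F,(V_i)_{i\in I})\cap(\text{open})$, because open immersions pull back shifted Poisson structures. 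So $\cM$ comes with a $0$-shifted Poisson structure, and the task is purely to show this is the same thing as a section $\Pi\in\bigwedge^2 T_{M/S}\ot p^*L_S$ which fiberwise recovers the classical shadow described in \cite{HP1}.

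Next I would unwind what a $0$-shifted Poisson structure is on the stack $\cM$ under the given hypotheses. Since $\cM\to M$ is a $\GG_m$-gerbe and $M\to S$ is smooth, $\cM$ has trivial derived structure and its cotangent complex, relative to $S$, is concentrated in degrees $[-1,0]$: in degree $0$ one has $\Omega_{M/S}$ pulled back, and in degree $-1$ the line coming from the gerbe (the $\GG_m$-automorphisms). A $0$-shifted Poisson structure is (the classical shadow of) a bivector, i.e.\ an element of $H^0(\cM,\wedge^2 T_{\cM})$ of the appropriate cohomological degree; the $\GG_m$-gerbe directions are symmetric under the automorphisms and contribute nothing, so the bivector descends along $\cM\to M$. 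This is exactly the mechanism already used in \cite{HP1} and in \cite{Pol-Poisson} for a single elliptic curve; here it is spread out in a family over $S$, and the relative dualizing twist $L_S$ (which entered through the $\cO$-orientation being valued in $\cO_S[-d]\ot p^*L_S$ rather than $\cO_S[-d]$) is what forces the bivector to be valued in $p^*L_S$ rather than being an honest Poisson bivector on $M/S$. I would make this precise by tracking the $L_S$-twist through the construction of the Lagrangian correspondence $\dCx\to\Vt\times_k\dPer$ and the transgression in Theorem \ref{transgression}/Lemma \ref{O-orientation-family}, observing that the output symplectic-to-Poisson machinery produces a section of $\wedge^2 T_{M/S}$ twisted by the line bundle appearing in the relative orientation.

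Then I would verify the fiberwise compatibility. For $s\in S$ a closed point, the fiber $C_s$ is a CY-curve (Gorenstein, $\om_{C_s}\simeq\OO$ after choosing the trivialization induced by a local trivialization of $L_S$ near $s$), and base change identifies $\cM_s$ with an open substack of $\dCx(C_s;F|_{C_s},(V_i|_{C_s}))$. The construction of the shifted Poisson structure is compatible with base change along $\Spec k(s)\to S$ — this is the content of the affine-base-change arguments in Lemma \ref{Gor-O-abs} and Lemma \ref{O-orientation-family} — so the restriction $\Pi_s$ agrees with the $0$-shifted Poisson structure on $\cM_s$ obtained by the absolute construction of \cite{HP1}, whose classical shadow on the coarse space $M_s$ is the Poisson bivector described there. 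Globality of $\Pi$ as a section of $\wedge^2 T_{M/S}\ot p^*L_S$ over all of $M$ follows since it is literally the classical shadow of the global $0$-shifted Poisson structure on $\cM$, not something assembled pointwise.

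The main obstacle I anticipate is the bookkeeping of the line bundle twist $p^*L_S$: making sure that the relative $\cO$-orientation of degree $d$ valued in $\cO_S[-d]$ versus $p^*L_S[-d]$ is handled consistently, and that the transgression really outputs a $p^*L_S$-twisted bivector rather than an untwisted one (equivalently, that choosing a trivialization of $L_S$ locally on $S$ and running the absolute construction fiber-by-fiber glues up correctly). A secondary, more routine point is checking that passing from the $0$-shifted Poisson structure on the gerbe $\cM$ to a bivector on $M$ is legitimate — i.e.\ that the gerbe directions in the cotangent complex do not obstruct descent — but this is standard (the $\GG_m$-action is trivial on $\wedge^2 T_{M/S}$) and essentially identical to the argument already given in \cite{HP1} for a single curve, so I would cite that and only indicate the relative modification.
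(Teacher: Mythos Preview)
Your overall strategy is sound and matches the paper's: pass from the $0$-shifted Poisson structure on $\cM$ to a bivector on $M$ using the gerbe descent from \cite{HP1,HP2}, and check fiberwise agreement. But the step you yourself flag as ``the main obstacle'' --- the $L_S$-twist --- is exactly the content of the proof, and your proposal does not actually carry it out. Saying you would ``track the $L_S$-twist through the construction of the Lagrangian correspondence and the transgression'' is not a proof; Theorem~\ref{existence-relative} as stated requires a genuine trivialization $\OO_X\cong\om_f$, which you do not have when $L_S$ is nontrivial, so you cannot apply it directly to $C\to S$.

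The paper's mechanism is concrete and different from vague twist-tracking: one passes to the total space $\wt{S}$ of the $\GG_m$-torsor associated with $L_S^{-1}$, where $p^*L_S$ becomes canonically trivial via a tautological section $\theta$. Now Theorem~\ref{existence-relative} applies over $\wt{S}$, giving an honest relative bivector $\Pi$ on $M\times_S\wt{S}$. The crux is then to show that $\Pi$ has weight $1$ for the $\GG_m$-action on $\wt{S}$, so that it descends to a section of $\bigwedge^2 T_{M/S}\ot p^*L_S$ on $M$. This weight computation is done by inspecting the explicit formula for $\Pi_\phi$ from \cite[Thm.~4.7]{HP1}: the Poisson bivector factors through an isomorphism $\HH^0(\cC^\vee\ot\om_{C_s})\to\HH^1(\cC^\vee[-1])$ induced by $\theta^{-1}$, and since $\lambda^*\theta=\lambda^{-1}\theta$ one gets $\lambda^*\Pi=\lambda\cdot\Pi$. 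This is the actual argument your proposal is missing.

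A secondary point: the proposition does not assume $S$ affine, so after the torsor/weight argument (done for affine $S$) one still needs to glue over an affine cover of $S$; you do not address this either.
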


\begin{proof} First, let us consider the case when $S$ is affine.
Let $\wt{S}$ be the total space of the $\GG_m$-torsor associated with the line bundle $L_S^{-1}$, so that $S=\wt{S}/\GG_m$.
Then there is a base change diagram
\[\xymatrix{
\wt{C}\ar[r]^{\wt{p}}\ar[d]^{\wt{\pi}} & C\ar[d]^\pi \\
\wt{S}\ar[r]^p & S }
\] 
Since $\omega_{\wt{C}/\wt{S}}=\wt{p}^*\omega_{C/S}$ and $p^*L_S$ is trivial, $\wt{C}$ admits an $\cO$-orientation relative to $\wt{S}$.

Therefore, by Theorem \ref{existence-relative}, we get a $0$-shifted Poisson structure on $\cM\times_S \wt{S}$, which is a $\GG_m$-gerbe over $M\times_S \wt{S}$.
The argument of Proposition 2.6 of \cite{HP2} can be easily generalized to the relative setting. Therefore, the $0$-shifted Poisson structure on $\cM\times_S \wt{S}$ descends to a Poisson structure on $M\times_S \wt{S}$ relative to $\wt{S}$. We then obtain a global section $\Pi$ of the pull back of $\wedge^2T_M$ on $M\times_S \wt{S}$. 
It remains to prove that $\Pi$ has weight $1$ with respect to the natural action of $\GG_m$ on $\wt{S}$.

By construction, on $\wt{S}$ we have an isomorphism 
$$\th:\cO_{\wt{S}}\to p^*L_S,$$
transforming under the action of $\GG_m$ by
\begin{equation}\label{la-theta-action-eq}
\la^*\th=\la^{-1}\cdot \th.
\end{equation}
Thus, we get an induced isomorphism 
$$\th:\cO_{\wt{C}}\to \om_{\wt{C}/\wt{S}}$$
still satisfying \eqref{la-theta-action-eq}.

Recall that the tangent space to a point of $\cM_s$ is identified with the hypercohomology $\HH^1(C_s,\cC)$, where $\cC$ is some natural complex,
equipped with a chain map
\[
\partial\circ {\bf{t}}: \cC^\vee[-1] \to \cC
\] 
(see Theorem 4.7 of \cite{HP1}), so that
the bivector induced by the $0$-shifted Poisson structure is given by 
\[
\Pi:\HH^1(C_s, \cC)^\vee\simeq \HH^0(C_s,\cC^\vee\ot \om_{C_s})\to \HH^1(C_s,\cC^\vee[-1])\to \HH^1(C_s,\cC),
\] 
where the middle arrow is induced by $\theta^{-1}$ and the last map is induced by $\partial\circ{\bf{t}}$. 
It follows that
$$\la^*\Pi=\la\cdot \Pi$$
as claimed.

For not necessarily affine base $S$ we can pick an open affine covering $(S_i)$, and apply the above argument to get
sections $\Pi_i$ of $\bigwedge^2 T_{M/S}\ot p^*L_S$ over open subsets $p^{-1}(S_i)$. Furthermore, still by the affine case,
$\Pi_i$ and $\Pi_j$ have the same restrictions to every open subset of the form $p^{-1}(U)$, where $U\sub S_i\cap S_j$ is an affine open.
Hence, $(\Pi_i)$ glue into a global section of $\bigwedge^2 T_{M/S}\ot p^*L_S$.
\end{proof}
%We should check that classical shadow of the relative $0$-shifted Poisson structures on $\MM$

%\subsection{Families of Poisson structures on projective spaces from families of CY-curves}
\subsection{Compatible Poisson structures from families of CY-curves}

Let $\pi:C\to S$ be a family of CY-curves, and let $L_S$ be a line bundle on $S$ such that $\om_{C/S}\simeq \pi^*L_S$.
Assume that $\cV$ a vector bundle on $C$, such that the corresponding bundles $\cV_s$ on $C_s$ are
endosimple, $R^1\pi_*\cV=0$ and 
$$\pi_*\cV\simeq V\ot \cO_S$$ 
for some vector space $V$.

Then for each $s\in S$, we have the moduli space $\cM_s$ of extensions of $\cV_s$ by $\cO_{C_s}$ on $C_s$,
which is a $\GG_m$-gerbe over 
$$M_s=\PP\Ext^1(\cV_s,\cO_{C_s})\simeq \PP H^1(C_s,\cV_s^\vee).$$
By Serre duality, we have an identification, 
$$M_s\simeq \PP H^0(C_s,\cV_s)^\vee\simeq \PP V^\vee.$$

Viewing extensions in $\MM_s$ as two-term complexes $\cO_s\to E$ with $E/\cO_s\simeq \cV_s$,
and using Proposition \ref{rel-moduli-Poisson-prop} we get a global section $\Pi$ of
the bundle $\bigwedge^2 T_{\PP V}\boxtimes L_S$ over $M=\PP V^\vee\times S$.

Note that this gives us a linear family of bivectors $\Pi_x$ on $\PP V^\vee$ parameterized by $x\in H^0(S,L_S)^\vee$.
However, we only know that $\Pi_x$ is integrable for $x$ coming from a point of $S$.

%This will follow from the identification of the Poisson structure with certain triple Massey products.

Now we specialize to the case when $S$ is a projective space, $S=\PP^N$ and $L_S=\cO_{\PP^N}(1)$.
Since in this case $S$ is identified with $\PP H^0(S,L_S)^\vee$,
the previous discussion gives the following result.

\begin{theorem}\label{proj-base-thm}
Let $\pi:C\to S=\PP^N$ be a family of Gorenstein curves of arithmetic genus 1 with $\om_{C/S}\simeq \pi^*\cO(1)$, and let
$\cV$ be a vector bundle on $C$, such that $\cV_s$ is endosimple for every $s\in S$,
$$R^1\pi_*\cV=0 \text{ and } \pi_*\cV\simeq V\ot\cO_S$$ 
for some vector space $V$. Then we get a global section $\Pi$
of $\bigwedge^2 T_{\PP V^\vee}\boxtimes \cO_{\PP^N}(1)$ over $\PP V^\vee\times \PP^N$,
such that for every $s\in \PP^N$, the bivector $\Pi_s$ defines a Poisson structure on $\PP V^\vee$.
%In particular, restricting this family to the affine space $\AA^N\sub \PP^N$, we get a linear family of
Equivalently, we get a collection $\Pi_0,\ldots,\Pi_N$ of
Poisson structures on $\PP V^\vee$, such that $[\Pi_i,\Pi_j]=0$.
\end{theorem}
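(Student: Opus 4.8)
The plan is to apply Theorem \ref{proj-base-thm}'s hypotheses directly to the machinery already assembled. First I would note that the family $\pi: C \to \PP^N$ is a family of CY-curves in the sense of Section \ref{CY-curves-sec}: flatness, projectivity, the Gorenstein condition and connected geometric fibers of dimension $1$ are assumed, and $\om_{C/S} \simeq \pi^*\cO_{\PP^N}(1)$ is exactly the required relation $\om_{C/S} \simeq \pi^*L_S$ with $L_S = \cO_{\PP^N}(1)$. Next, since $\cV_s$ is endosimple, $R^1\pi_*\cV = 0$, and $\pi_*\cV \simeq V \ot \cO_S$, the vector bundle $\cV$ satisfies the hypotheses of the discussion preceding the theorem: for each $s$ the space $\Ext^1(\cV_s, \cO_{C_s}) \simeq H^1(C_s, \cV_s^\vee)$ has constant dimension $\dim V$ (by endosimplicity and $R^1\pi_*\cV=0$, via Riemann–Roch and cohomology and base change), so the relative moduli stack $\cM$ of extensions of $\cV_s$ by $\cO_{C_s}$, viewed inside $\dCx(C/S; F, (V_i))$ with the two-term description $\cO_s \to E$ and $E/\cO_s \simeq \cV_s$, is a $\GG_m$-gerbe over the projective bundle $M = \PP V^\vee \times \PP^N \to \PP^N$, which is smooth over $S$.

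Then I would invoke Proposition \ref{rel-moduli-Poisson-prop} (equivalently, the boxed conclusion of the paragraph preceding the theorem statement): since $\cM \to M \to S$ is a relative coarse moduli with $M \to S$ smooth and $\cM \to M$ a $\GG_m$-gerbe with trivial derived structure, we obtain a global section $\Pi \in \bigwedge^2 T_{M/S} \ot p^*L_S = \bigwedge^2 T_{\PP V^\vee} \boxtimes \cO_{\PP^N}(1)$ over $\PP V^\vee \times \PP^N$, whose restriction $\Pi_s$ to each fiber $M_s = \PP V^\vee$ is the bivector induced by the $0$-shifted Poisson structure on the stack $\cM_s$; in particular each $\Pi_s$ is a genuine Poisson structure (the Jacobi identity being automatic for a classical shadow of a $0$-shifted Poisson structure on a stack with trivial derived structure).

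For the "equivalently" reformulation: a global section of $\bigwedge^2 T_{\PP V^\vee} \boxtimes \cO_{\PP^N}(1)$ is the same as an element of $H^0(\PP V^\vee, \bigwedge^2 T_{\PP V^\vee}) \ot H^0(\PP^N, \cO_{\PP^N}(1))$, and since $H^0(\PP^N, \cO_{\PP^N}(1))$ has basis the linear coordinates $x_0,\dots,x_N$, writing $\Pi = \sum_{i=0}^N \Pi_i \ot x_i$ produces bivectors $\Pi_0,\dots,\Pi_N \in H^0(\PP V^\vee, \bigwedge^2 T_{\PP V^\vee})$; evaluating at the coordinate point $s = [0:\cdots:1:\cdots:0] \in \PP^N$ gives $\Pi_s = \Pi_i$, which is Poisson by the previous paragraph, so each $\Pi_i$ is Poisson. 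Moreover, for an arbitrary point $s = [a_0:\cdots:a_N] \in \PP^N$ the fiber bivector is $\Pi_s = \sum_i a_i \Pi_i$, again Poisson; thus every linear combination of the $\Pi_i$ that arises from a point of $\PP^N$—which is all of them up to scalar—is Poisson, and bilinearity of the Schouten bracket forces $[\Pi_i, \Pi_j] = 0$ for all $i,j$ (expand $[\sum a_i \Pi_i, \sum a_j \Pi_j] = \sum_{i,j} a_i a_j [\Pi_i,\Pi_j] = 0$ for all $(a_i)$ and use that the monomials $a_i a_j$ are linearly independent as functions on $\PP^N$, valid since $N \geq 1$; the case $N=0$ being vacuous). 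The main obstacle, such as it is, is purely verificatory: checking that the hypotheses of Proposition \ref{rel-moduli-Poisson-prop}—especially the existence of the relative coarse moduli $M \to S$ and the triviality of the derived structure on $\cM$—are genuinely met, which follows from endosimplicity of $\cV_s$ (giving $\Ext^2$ and obstruction vanishing in the relevant range on a curve) together with the constancy of $h^1(C_s,\cV_s^\vee)$; everything else is a formal consequence of the preceding results.
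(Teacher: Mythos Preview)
Your proposal is correct and follows essentially the same route as the paper. The paper's ``proof'' is simply the discussion in Section~\ref{CY-curves-sec} preceding the theorem statement: apply Proposition~\ref{rel-moduli-Poisson-prop} to the relative moduli of extensions (a $\GG_m$-gerbe over $\PP V^\vee \times S$, using Serre duality to identify the fibers), obtain $\Pi$ as a section of $\bigwedge^2 T_{\PP V^\vee}\boxtimes L_S$, and then observe that when $S=\PP^N$ with $L_S=\cO(1)$ every point of $\PP H^0(S,L_S)^\vee$ arises from a point of $S$, so every linear combination of the $\Pi_i$ is Poisson; your expansion of this last step via the Schouten bracket is exactly the intended argument.
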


\subsection{Families of anticanonical divisors}

We will use Theorem \ref{proj-base-thm} to get compatible Poisson brackets on projective spaces
from linear systems of anticanonical divisors on surfaces.

\begin{prop}\label{surface-antican-example-prop}
(i) Let $X$ be a smooth projective surface, $W:=H^0(X,\om_X^{-1})$. Let $C\sub X\times \PP W$ be the universal
anticanonical divisor, viewed as a family over $\PP W$ via the natural projection $\pi:C\to \PP W$. 
Then $\om_{C/\PP W}\simeq \pi^*\cO(1)$. 

\noindent
(ii) In addition, let $\cV$ be a vector bundle on $X$ such that
$H^*(X,\cV\ot\om_X)=H^1(X,\cV)=0$. Then the restriction 
$$\cV_C:=\cV\boxtimes\cO|_C$$ satisfies
$R^1\pi_*\cV_C=0$, $R^0\pi_*\cV_C\simeq V\ot \cO_{\PP W}$,
where $V:=H^0(X,\cV)$.

\noindent
(iii) In the situation of (i) assume in addition that there exists a smooth anticanonical divisor $C_0\sub X$.
Then for any vector bundle $\cV$ on $X$ such that $H^*(X,\cV\ot\om_X)=0$ and the restriction $\cV|_{C_0}$ is
a semistable bundle on $C_0$ of positive degree
%$c_1(\cV)\cdot c_1(\om_X)<0$, 
one has $H^1(X,\cV)=0$, i.e.,
the assumptions of (ii) are satisfied.
\end{prop}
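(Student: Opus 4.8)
The plan is to deduce $H^1(X,\cV)=0$ from the vanishing $H^*(X,\cV\ot\om_X)=0$ together with the hypotheses on $\cV|_{C_0}$, by using the restriction exact sequence attached to the smooth anticanonical divisor $C_0\sub X$. First I would write down the short exact sequence of sheaves on $X$
\[
0\to \cV\ot\om_X\to \cV\to \cV|_{C_0}\to 0,
\]
obtained by tensoring $\cV$ with $0\to \om_X=\cO_X(-C_0)\to \cO_X\to \cO_{C_0}\to 0$. Taking the long exact cohomology sequence and using $H^1(X,\cV\ot\om_X)=H^2(X,\cV\ot\om_X)=0$, we get that the restriction map $H^1(X,\cV)\to H^1(C_0,\cV|_{C_0})$ is an isomorphism. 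So it suffices to prove $H^1(C_0,\cV|_{C_0})=0$.

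Next I would analyze $H^1(C_0,\cV|_{C_0})$ on the genus-$1$ curve $C_0$. By Serre duality on $C_0$ (using $\om_{C_0}\simeq\cO_{C_0}$, which holds since $C_0$ is anticanonical in $X$ and hence has trivial dualizing sheaf by adjunction), we have $H^1(C_0,\cV|_{C_0})\simeq H^0(C_0,(\cV|_{C_0})^\vee)^\vee$. Now $\cV|_{C_0}$ is semistable of positive degree by hypothesis, so its dual $(\cV|_{C_0})^\vee$ is semistable of negative degree, hence of negative slope; a semistable bundle of negative slope on a curve has no nonzero global sections. Therefore $H^0(C_0,(\cV|_{C_0})^\vee)=0$, which gives $H^1(C_0,\cV|_{C_0})=0$, and combining with the isomorphism above yields $H^1(X,\cV)=0$, as required. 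The remaining hypotheses of part (ii), namely $H^*(X,\cV\ot\om_X)=0$, are assumed outright, so all assumptions of (ii) are verified.

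I do not expect any serious obstacle here; the only points requiring a little care are the identification $\om_{C_0}\simeq\cO_{C_0}$ (adjunction: $\om_{C_0}\simeq \om_X(C_0)|_{C_0}\simeq\om_X\ot\om_X^{-1}|_{C_0}\simeq\cO_{C_0}$) and the standard fact that a semistable bundle of negative slope on a smooth projective curve has no global sections (if $0\neq s\colon\cO_{C_0}\to (\cV|_{C_0})^\vee$ then $\cO_{C_0}$ would be a subsheaf, and passing to its saturation produces a line subbundle of nonnegative degree inside a bundle all of whose quotients—equivalently sub-line-bundles—have slope $\le$ the negative slope, a contradiction). Both are routine, so the argument is essentially the three-line cohomology chase above.
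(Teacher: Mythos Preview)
Your proof of part (iii) is correct and follows essentially the same route as the paper: the paper uses the long exact sequence associated to $0\to \cV\ot\om_X\to \cV\to \cV|_{C_0}\to 0$ to identify $H^1(X,\cV)\simeq H^1(C_0,\cV|_{C_0})$, and then invokes Serre duality on $C_0$ together with semistability and positive degree to conclude $H^1(C_0,\cV|_{C_0})=\Hom(\cV|_{C_0},\cO_{C_0})^*=0$. Your write-up is a bit more explicit about the adjunction for $\om_{C_0}$ and the ``no sections for negative slope'' step, but the argument is the same.
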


\begin{proof} 
(i) Note that $\cO(C)\simeq \om_X^{-1}\boxtimes \cO(1)$.
Hence, by the adjunction formula we get
$$\om_{C/\PP W}\simeq (\om_X\boxtimes\cO)(C)|_C\simeq \cO\boxtimes \cO(1)|_C\simeq \pi^*\cO(1).$$

\noindent
(ii) For every anticanonical divisor $C_0\sub X$, we have a long exact sequence
\begin{align}\label{res-to-antican-divisor-seq}
&H^0(X,\cV(-C_0))\to H^0(X,\cV)\to H^0(C_0,\cV|_{C_0})\to H^1(X,\cV(-C_0))\to H^1(X,\cV)\to \nonumber\\
&H^1(C_0,\cV|_{C_0})\to H^2(X,\cV(-C_0)).
\end{align}
Now our assumptions on $\cV$ implies that $H^0(X,\cV)\to H^0(C_0,\cV|_{C_0})$ is an isomorphism and
that $H^1(C_0,\cV|_{C_0})=0$.

Finally, $R^0\pi_*\cV_C=R^0\pi_*\cV$ is trivial by base change formula.

\noindent
(iii) Let us consider the sequence \eqref{res-to-antican-divisor-seq} for a smooth anticanonical divisor $C_0$.
Since $H^*(X,\cV\ot\om_X)=0$, we deduce an isomorphism
$$H^1(X,\cV)\simeq H^1(C_0,\cV|_{C_0}).$$
But $\cV|_{C_0}$ is semistable of positive degree. It follows that 
$$H^1(C_0,\cV|_{C_0})=\Hom(\cV,\cO_{C_0})^*=0,$$ 
so $H^1(X,\cV)=0$.
\end{proof}

Now we are ready to prove our main result about families of compatible Poisson brackets coming from exceptional bundles on surfaces.

\begin{theorem}\label{exc-pair-thm}
(i) Let $X$ be a smooth projective surface with $H^{>0}(X,\OO_X)=0$ and $h^0(X,\om_X^{-1})>1$, and let $\cV$ be an exceptional vector bundle on $X$ such that $(\cO,\cV)$ is an exceptional pair and such that $c_1(\cV)\cdot c_1(\om_X^{-1})>0$.
%Let also $C_0\sub X$ be a smooth anticanonical divisor such that $H^1(T_X|_{C_0})=0$ and $\deg(\cV|_{C_0})>0$. 
Then there is a natural linear map
$$\kappa:H^0(X,\om_X^{-1})\to H^0(\PP H^0(X,\cV)^*,{\bigwedge}^2 T)$$
whose image consists of compatible Poisson brackets and such that
for every smooth anticanonical divisor $C\sub X$, $\kappa([C])$ is the Feigin-Odesskii bracket associated with $\cV|_C$.

\noindent
(ii) Assume in addition that $c_1(\cV)\cdot c_1(\om_X^{-1})>\rk(\cV)+1$ and that there exists a pair of non-isomorphic smooth anticanonical divisors in $X$.
Then $\ker(\kappa)$ is entirely contained in the discriminant locus (corresponding to singular anticanonical divisors).
In particular, for any smooth anticanonical divisor $C$, the Feigin-Odesskii bracket associated with $\cV|_C$ extends to a bihamiltonian structure.
If moreover every singular anticanonical divisor $C_0$ extends to a non-isotrivial pencil $\la C_0+\mu C$, with smooth $C$, then $\kappa$ is injective.
\end{theorem}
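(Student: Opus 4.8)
The plan is to deduce both parts from Theorem~\ref{proj-base-thm}, applied to the universal anticanonical divisor, combined with the rank computations of Section~\ref{FO-br-sec}.

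\emph{Part (i).} Set $W:=H^0(X,\om_X^{-1})$ and $S:=\PP W$ (of dimension $\ge 1$ by hypothesis), and let $\pi\colon C\to S$ be the universal anticanonical divisor. By Proposition~\ref{surface-antican-example-prop}(i), $\pi$ is a family of CY-curves with $\om_{C/S}\simeq\pi^*\cO(1)$, whose fibres $C_s$ are connected Gorenstein curves of arithmetic genus $1$ (connectedness and $\chi(\cO_{C_s})=0$ follow from $H^{>0}(X,\cO_X)=0$). I would take $\cV_C:=\cV\boxtimes\cO|_C$ and check the hypotheses of Theorem~\ref{proj-base-thm}. Since $(\cO,\cV)$ is an exceptional pair, $\Ext^\bullet(\cV,\cO_X)=0$, hence $H^\bullet(X,\cV\ot\om_X)=0$ by Serre duality on $X$; since $\cV$ is exceptional, $\Ext^1(\cV,\cV)=\Ext^2(\cV,\cV)=0$, and Serre duality again gives $H^0(X,\und{\End}(\cV)\ot\om_X)=H^1(X,\und{\End}(\cV)\ot\om_X)=0$. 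Tensoring $0\to\cO_X(-C_s)\to\cO_X\to\cO_{C_s}\to 0$ by $\und{\End}(\cV)$ (using $\cO_X(-C_s)\cong\om_X$) and passing to cohomology gives $\End(\cV|_{C_s})=H^0(X,\und{\End}(\cV))=k$ for \emph{every} $s$, so every $\cV_{C,s}$ is endosimple; tensoring the same sequence by $\cV$ gives $H^i(X,\cV)\cong H^i(C_s,\cV|_{C_s})$, so $H^1(X,\cV)\cong\Hom_{C_s}(\cV|_{C_s},\cO_{C_s})^{*}$ by Serre duality on $C_s$, and this vanishes because $\cV|_{C_s}$ is endosimple of positive degree $c_1(\cV)\cdot c_1(\om_X^{-1})$ on a genus-one curve (on the smooth fibres it is then stable, hence admits no nonzero map to $\cO_{C_s}$). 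Now Proposition~\ref{surface-antican-example-prop}(ii) applies, and Theorem~\ref{proj-base-thm} produces a section $\Pi$ of $\bigwedge^2 T_{\PP V^\vee}\boxtimes\cO(1)$ over $\PP V^\vee\times\PP W$ with $V:=H^0(X,\cV)$ (so $\PP V^\vee=\PP H^0(X,\cV)^*$); equivalently a linear map $\kappa\colon W\to H^0(\PP V^\vee,\bigwedge^2 T)$ whose image consists of compatible Poisson brackets (the vanishing $[\Pi_i,\Pi_j]=0$ of Theorem~\ref{proj-base-thm}). Finally, for a smooth anticanonical divisor $C$ cut out by $s_C\in W$, the bivector $\kappa(s_C)$ on $\PP V^\vee\simeq\PP\Ext^1(\cV|_C,\cO_C)$ is, by the construction behind Theorem~\ref{proj-base-thm} (extensions of $\cV|_C$ by $\cO_C$ viewed as two-term complexes), the bivector induced by the corresponding $0$-shifted Poisson structure, which is exactly diagram~\eqref{Pi-definition-diagram} defining the Feigin--Odesskii bracket of $\cV|_C$ (a stable bundle, being simple on an elliptic curve, automatically of coprime rank and degree).

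\emph{Part (ii).} Write $d:=c_1(\cV)\cdot c_1(\om_X^{-1})$, $r:=\rk\cV$ and $c:=\gcd(d,r+1)$; the new hypothesis $d>r+1$ forces $c\le r+1<d$, so $d-c>0$. For a smooth anticanonical $C$, $\cV|_C$ is stable of rank $r$ and degree $d$ on the elliptic curve $C$, so by Proposition~\ref{Poisson-degeneration-prop} the bivector $\kappa(s_C)$ has generic rank $d-c>0$; in particular $\kappa(s_C)\ne 0$. Hence if $0\ne s\in\ker\kappa$ then the corresponding anticanonical divisor must be singular, which is exactly the assertion that $\ker\kappa$ lies in the discriminant locus. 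For the bihamiltonian conclusion, fix a smooth anticanonical $C$ and, using the hypothesis, choose a smooth anticanonical $C'\not\simeq C$ (at least one member of the given pair of non-isomorphic smooth anticanonical divisors does the job); set $q:=\kappa(s_C)$ and $q':=\kappa(s_{C'})$. These are nonzero Poisson bivectors in the image of $\kappa$, hence compatible, so it remains to see that they are linearly independent. If $q'=\lambda q$ for a scalar $\lambda$, then, absorbing $\lambda$ into the rescaling inherent in the definition of these brackets, the identity of $\PP V^\vee$ would be a Poisson isomorphism between the Feigin--Odesskii brackets of $\cV|_C$ and of $\cV|_{C'}$; since both are stable of the same degree $d$ with ranks $r<d-1$, Theorem~\ref{reconstr-thm} would give $C\simeq C'$, a contradiction. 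Thus $q,q'$ span a two-dimensional space of pairwise compatible Poisson brackets containing $q=\kappa(s_C)$, i.e.\ a bihamiltonian structure extending the Feigin--Odesskii bracket of $\cV|_C$.

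\emph{Main obstacle.} The delicate step is in part (i): verifying endosimplicity of $\cV_{C,s}$ and the vanishing $H^1(X,\cV)=0$ \emph{uniformly in $s$}, in particular over the possibly singular or reducible fibres where Atiyah's classification is unavailable; I handle this by restricting to an anticanonical curve so that everything is controlled on $X$ itself. In part (ii) the crux is the correct invocation of the reconstruction Theorem~\ref{reconstr-thm} to promote ``the two brackets in the pencil coincide up to scale'' to ``$C\simeq C'$'', and the non-vanishing of $\kappa(s_C)$ from Proposition~\ref{Poisson-degeneration-prop} is precisely what forces the hypothesis $d>r+1$ rather than merely $d>0$.
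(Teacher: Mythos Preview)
Your proof is correct and follows essentially the same route as the paper: apply Theorem~\ref{proj-base-thm} to the universal anticanonical divisor via Proposition~\ref{surface-antican-example-prop}, then use the reconstruction Theorem~\ref{reconstr-thm} for the bihamiltonian conclusion.

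Two points where you diverge, both to your advantage. In part~(i) the paper simply cites as ``well known'' that $\cV|_C$ is endosimple on a smooth anticanonical curve, whereas you give the explicit restriction argument using the sequence $0\to\und{\End}(\cV)\ot\om_X\to\und{\End}(\cV)\to\und{\End}(\cV)|_{C_s}\to 0$ together with $\Ext^{>0}(\cV,\cV)=0$; this establishes endosimplicity for \emph{every} fibre, including singular ones, which is what Theorem~\ref{proj-base-thm} actually requires. In part~(ii) the paper proves $\ker\kappa$ lies in the discriminant locus by contradiction through Theorem~\ref{reconstr-thm} (assuming a smooth $C\in\ker\kappa$, picking a non-isomorphic smooth $C'$, and arguing that proportionality of the brackets forces $C\simeq C'$); you instead observe directly that $\kappa(s_C)\neq 0$ for smooth $C$ because by Proposition~\ref{Poisson-degeneration-prop} the Feigin--Odesskii bracket has generic rank $d-c>0$ once $d>r+1\ge c$. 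This is cleaner and avoids a mild awkwardness in the paper's phrasing when $\kappa(s_C)=0$.

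One small wrinkle: your justification of $\Hom(\cV|_{C_s},\cO_{C_s})=0$ is only written out for smooth fibres. Since you have already shown $H^1(X,\cV)\cong H^1(C_s,\cV|_{C_s})$ for \emph{every} $s$, a single smooth fibre suffices to conclude $H^1(X,\cV)=0$, and then Proposition~\ref{surface-antican-example-prop}(ii) gives the vanishing over all fibres; it would be worth saying this explicitly. (The paper likewise relies on the existence of a smooth anticanonical divisor at this step, via part~(iii) of that proposition.)
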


\begin{proof}
(i) It is well known that for every smooth anticanonical divisor $C\sub X$, the restriction
$\cV|_C$ is an endosimple (and hence stable) vector bundle on an elliptic curve $C_0$. This implies that the assumptions
of Proposition \ref{surface-antican-example-prop}(iii) are satisfied, and the assertion follows.

(ii) Let $[C]$ be in $\ker(\kappa)$. Assume $C$ is smooth. Pick another smooth anticanonical divisor $C'$ such that $C'\not\simeq C$. Then
$\kappa(\lan [C],[C']\ran)$ is at most $1$-dimensional, so the Feigin-Odesskii brackets associated with $\cV|_C$ and $\cV|_{C'}$ are proportional.
By Theorem \ref{reconstr-thm}, this implies that $C\simeq C'$ which is a contradiction. This shows that $\ker(\kappa)$ is contained in the discriminant locus.

Thus, for a pair $C$, $C'$ of non-isomorphic smooth anticanonical divisor on $X$, the subspace $\kappa(\lan [C],[C']\ran)$ is $2$-dimensional. Hence, we
get a bihamiltonian structure.

For the last assertion, we apply the same argument as above to a non-isotrivial pencil $\la C_0+\mu C$ with $[C_0]$ in $\ker(\kappa)$ to get a contradiction.
\end{proof}

\begin{corollary}\label{del-Pezzo-cor}
Let $C$ be a smooth cubic in $\PP^2$ and let us fix $n\le 7$. Assume that for any $n$ generic points $p_1,\ldots,p_n\in C$, there exists an
exceptional pair $(\cV,\cO)$ on the blow up $X$ of $\PP^2$ at these points, with $c_1(\cV)\cdot c_1(\om_X^{-1})>\rk(\cV)+1$.
Then the Feigin-Odesskii bracket associated with $\cV|_C$ extends to a bihamiltonian structure.
\end{corollary}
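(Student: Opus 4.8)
The plan is to verify directly that the hypotheses of Theorem \ref{exc-pair-thm}(ii) are satisfied for the blow-up $X$ of $\PP^2$ at the $n$ chosen points, with the exceptional bundle $\cV$ reordered so that $(\OO_X,\cV)$ (rather than $(\cV,\OO_X)$) is the exceptional pair used there. Since $n\le 7$, the surface $X$ is a (weak) del Pezzo surface, so $\om_X^{-1}$ is nef and big; in particular $H^{>0}(X,\OO_X)=0$ (rationality) and, by the hypothesis that $C$ passes through $p_1,\dots,p_n$, the strict transform of $C$ is a smooth anticanonical divisor on $X$, so $h^0(X,\om_X^{-1})\ge 2$ (in fact the anticanonical system is a pencil once $n=7$, and larger for $n<7$). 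This checks the hypotheses of Theorem \ref{exc-pair-thm}(i).

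For part (ii) of Theorem \ref{exc-pair-thm} I need two further inputs. First, the numerical hypothesis $c_1(\cV)\cdot c_1(\om_X^{-1})>\rk(\cV)+1$ is assumed outright in the corollary. Second, I need two non-isomorphic smooth anticanonical divisors on $X$. Here I would argue that the anticanonical pencil (or larger linear system) on a del Pezzo surface has general member smooth (Bertini, together with the fact that the base locus, if any, is a point through which the general member is smooth), and that a general pencil of elliptic curves is not isotrivial — equivalently, the $j$-invariant of the members is nonconstant. Concretely, the strict transform $\widetilde C$ of our fixed cubic $C$ is one smooth anticanonical divisor; its $j$-invariant equals $j(C)$. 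Moving within the anticanonical system I can find another smooth member $C'$ whose strict transform downstairs is a different smooth plane cubic, and generically $j(C')\ne j(C)$, so $C'\not\simeq \widetilde C$ as abstract curves. (If $n=7$ one must check the pencil is non-isotrivial; this is classical, since an isotrivial anticanonical pencil on a del Pezzo surface of degree $1$ does not occur.) Then Theorem \ref{exc-pair-thm}(ii) applies verbatim: $\ker(\kappa)$ lies in the discriminant locus, so $[\widetilde C]\notin\ker(\kappa)$, and hence $\kappa([\widetilde C])=q_{n',k'}(C)$ — the Feigin–Odesskii bracket attached to $\cV|_{\widetilde C}=\cV|_C$ — extends to the bihamiltonian structure spanned by $\kappa([\widetilde C])$ and $\kappa([C'])$.

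The one point requiring care, and the main obstacle, is the existence of a second smooth anticanonical divisor $C'$ with $C'\not\simeq \widetilde C$: this is exactly the non-isotriviality of the anticanonical family, and it is what forces the restriction $n\le 7$ (so that $h^0(\om_X^{-1})\ge 2$ and the family is genuinely $1$-parameter-or-more). I would dispatch it by noting that an isotrivial family of smooth genus-$1$ curves over a rational base with nonconstant $j$ cannot exist, and checking that our family has nonconstant $j$ — e.g. because the anticanonical system contains members whose images in $\PP^2$ are plane cubics with distinct $j$-invariants, which already holds for the generic two-dimensional subsystem when $n\le 6$, and for the pencil when $n=7$ by the classical description of degree-$1$ del Pezzo surfaces as double covers of a quadric cone branched along a sextic, whose associated elliptic fibration is non-isotrivial. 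Everything else is a direct citation of Theorem \ref{exc-pair-thm}, Theorem \ref{reconstr-thm} (invoked inside it), and standard facts about del Pezzo surfaces.
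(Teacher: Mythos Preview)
Your overall strategy---reduce to Theorem \ref{exc-pair-thm}(ii)---is correct, but you miss the trick that makes the paper's proof a two-liner and instead embark on a non-isotriviality argument that is both unnecessary and, as written, faulty. The hypothesis of the corollary is universally quantified over the choice of $n$ points on $C$; the paper exploits this freedom by choosing the points \emph{after} fixing a second curve. Concretely: pick any smooth cubic $C'\subset\PP^2$ with $C'\not\simeq C$, move it by an element of $\Aut(\PP^2)$ so that $C$ and $C'$ meet transversally, and then take the $n\le 7$ blow-up centers from among the nine points of $C\cap C'$. Both $C$ and $C'$ lift to smooth anticanonical divisors on $X$, non-isomorphic by construction, and the hypothesis hands you the exceptional pair for this particular $X$. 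Theorem \ref{exc-pair-thm}(ii) then applies directly---no analysis of the anticanonical family is needed.

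Your alternative route (fix the points first, then argue the anticanonical system is non-isotrivial) could perhaps be made to work, but the details you give are wrong. For $n=7$ the surface is a del Pezzo of degree $2$, with $h^0(\om_X^{-1})=3$ (a net, not a pencil); the double-cover-of-a-quadric-cone picture you invoke is the degree-$1$ case, i.e.\ $n=8$, which is outside the range of the corollary. More substantively, the claim that a positive-dimensional linear system of plane cubics through $n\le 7$ given points has non-constant $j$-invariant is not automatic from a dimension count (an $8$-dimensional $\PGL_3$-orbit closure could in principle contain a $\PP^2$), and your sketch does not address this. The paper's choice of points in $C\cap C'$ sidesteps the whole issue.
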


\begin{proof}
First, we pick a smooth cubic $C'\sub \PP^2$, non-isomorphic to $C$. Changing $C'$ by an auto morphism of $\PP^2$ we can assume
that $C$ and $C'$ intersect transversally. Choose $n$ points in $C\cap C'$ and consider the corresponding blow up $X$. Then
both $C$ and $C'$ lift to anticanonical divisors of $X$. Now we can apply Theorem \ref{exc-pair-thm}(ii).
\end{proof}

%\begin{remark} We do not have a general criterion of estimating the rank of the linear map
%corresponding to the construction of Proposition \ref{surface-antican-example-prop}.
%Below we will do this in some examples.
%\end{remark}

\begin{example} Let $X=\PP^2$ and $\cV=L=\cO(k)$, where $k=1$ or $2$. 
Then the assumptions of Theorem \ref{exc-pair-thm} are satisfied.
Note that $H^0(\PP^2,\om_{\PP^2}^{-1})$
is $10$-dimensional, while $H^0(\PP^2,L)$ is $3$-dimensional for $k=1$ and $6$-dimensional for $k=2$.
Thus, we get a set of $10$ compatible Poisson brackets
on $\PP^2$ (for $k=1$) and on $\PP^5$ (for $k=2$), containing the FO-brackets $q_{3,1}$ and $q_{6,1}$, respectively.
\end{example}

We can generalize the above example as follows (excluding the trivial cases of $q_{3,1}$, $q_{3,2}=0$ and $q_{6,5}=0$).
Let $(f_n)$ denote the Fibonacci sequence, where $f_0=0$, $f_1=1$.

\begin{prop}\label{Fibonacci-prop} 
For every $n\ge 2$, there exists a $10$-dimensional subspace of compatible Poisson brackets on $\PP^{3f_{2n-1}}$ containing 
every $q_{3f_{2n-1},f_{2n-3}}(C)$; while for $n\ge 3$, there exists a $10$-dimensional subspace of compatible Poisson brackets on $\PP^{3f_{2n-1}}$ containing 
every $q_{3f_{2n-1},3f_{2n-1}-f_{2n-3}}(C)$.
\end{prop}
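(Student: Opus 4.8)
The plan is to apply Theorem~\ref{exc-pair-thm} with $X=\PP^2$, for which $H^{>0}(\PP^2,\OO)=0$, $H^0(\PP^2,\om_{\PP^2}^{-1})=H^0(\OO(3))$ is $10$-dimensional, and pairs of non-isomorphic smooth anticanonical divisors (smooth plane cubics of distinct $j$-invariant) certainly exist. The entire content is then the construction of suitable exceptional bundles on $\PP^2$: I claim that for every $m\ge 2$ there is an exceptional bundle $\cV_m$ with $(\OO,\cV_m)$ an exceptional pair, $\rk(\cV_m)=f_{2m-3}$ and $c_1(\cV_m)=f_{2m-1}H$ ($H$ the hyperplane class), and for every $m\ge 3$ an exceptional bundle $\cU_m$ with $(\OO,\cU_m)$ an exceptional pair, $\rk(\cU_m)=3f_{2m-1}-f_{2m-3}=f_{2m+1}$ and $c_1(\cU_m)=f_{2m-1}H$. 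Granting this, $c_1(\cV_m)\cdot c_1(\om_{\PP^2}^{-1})=3f_{2m-1}$ exceeds $\rk(\cV_m)+1=f_{2m-3}+1$ for all $m\ge 2$, while $3f_{2m-1}>\rk(\cU_m)+1=f_{2m+1}+1$ holds precisely when $f_{2m-3}>1$, i.e.\ $m\ge3$; thus the hypotheses of Theorem~\ref{exc-pair-thm}(ii) are met in both cases. For a smooth cubic $C\subset\PP^2$ one has $[C]=c_1(\om_{\PP^2}^{-1})=3H$, so $\cV_m|_C$ (resp.\ $\cU_m|_C$) is a stable bundle of rank $f_{2m-3}$ (resp.\ $f_{2m+1}$) and degree $3f_{2m-1}$ on an elliptic curve (the relevant gcd is $1$ since Fibonacci numbers with indices differing by $2$ are coprime and $3\nmid f_k$ for $k$ odd). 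Hence Theorem~\ref{exc-pair-thm}(i) produces a linear map $\kappa\colon H^0(\PP^2,\OO(3))\to H^0(\PP H^0(\PP^2,\cV_m)^*,\bigwedge^2 T)$ whose image consists of compatible Poisson brackets and with $\kappa([C])=q_{3f_{2m-1},f_{2m-3}}(C)$, resp.\ $q_{3f_{2m-1},\,3f_{2m-1}-f_{2m-3}}(C)$, for every smooth $C$; moreover $\dim H^0(\PP^2,\cV_m)=\dim H^0(C,\cV_m|_C)=3f_{2m-1}$ by Proposition~\ref{surface-antican-example-prop}(ii), so these brackets live on the projective space of the expected dimension.

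To construct $\cV_m$ and $\cU_m$ I would use the mutation calculus for exceptional collections on $\PP^2$ (the classification of exceptional bundles due to Dr\'ezet--Le Potier, and the helix/mutation formalism of Gorodentsev--Rudakov). Recall that exceptional bundles on $\PP^2$ are stable, and that the ranks $(r_1,r_2,r_3)$ of a full exceptional triple satisfy the Markov equation $r_1^2+r_2^2+r_3^2=3r_1r_2r_3$; the Fibonacci branch of the Markov tree is $(1,f_{2m-1},f_{2m+1})$, the mutation $(1,a,b)\mapsto(1,b,3b-a)$ reproducing $f_{2m+1}=3f_{2m-1}-f_{2m-3}$, and this mutation acts on the numerical invariants $(\rk,c_1)$ by the corresponding unimodular substitution. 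So I would take $\cV_2=\OO(2)$ (rank $1$, $c_1=2H$) and $\cV_3=T_{\PP^2}(1)$ (rank $2$, $c_1=5H$; here $(\OO,T_{\PP^2}(1))$ is an exceptional pair because $(\OO(1),T_{\PP^2}(1),\OO(2))$ is a full exceptional collection), and then build $\cV_{m+1}$, resp.\ $\cU_m$, from the preceding bundle by a single left/right mutation inside a full exceptional collection containing $\OO$, arranged so as to keep $\OO$ adjacent to the bundle of interest. An induction on $m$ --- propagating $(\rk,c_1)$ through the mutation and verifying at each step that $\OO$ and the mutated bundle again form an exceptional pair --- then yields the asserted invariants. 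Pinning down the exact sequence of mutations and the reorderings needed to keep $\OO$ next to $\cV_m$, resp.\ $\cU_m$, is the technical core of the proof and the main obstacle.

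Finally I must upgrade the bihamiltonian conclusion of Theorem~\ref{exc-pair-thm}(ii) to the $10$-dimensional one, i.e.\ prove that $\kappa$ is injective. Suppose $0\ne w_0\in\ker\kappa$; by Theorem~\ref{exc-pair-thm}(ii), $[w_0]\in\PP H^0(\PP^2,\OO(3))$ lies in the discriminant hypersurface. The $j$-invariant is a nonconstant rational function on $\PP H^0(\PP^2,\OO(3))$, and the lines through $[w_0]$ on which it is constant form a proper subfamily of the $\PP^8$ of lines through $[w_0]$; so a general line $\ell$ through $[w_0]$ is not contained in the discriminant and has nonconstant $j$, hence contains two non-isomorphic smooth cubics $C,C'$. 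Both $\kappa([C])$ and $\kappa([C'])$ lie in $\kappa(\widetilde\ell)$, where $\widetilde\ell\subset H^0(\PP^2,\OO(3))$ is the $2$-plane over $\ell$; since $\kappa$ kills $w_0\in\widetilde\ell$, the space $\kappa(\widetilde\ell)$ is at most $1$-dimensional, so $\kappa([C])$ and $\kappa([C'])$ are proportional. But $\rk(\cV_m)<3f_{2m-1}-1$ (resp.\ $\rk(\cU_m)<3f_{2m-1}-1$), so Theorem~\ref{reconstr-thm} applies and forces $C\simeq C'$, a contradiction. Therefore $\ker\kappa=0$, so $\kappa$ identifies $H^0(\PP^2,\OO(3))$ with a $10$-dimensional space of compatible Poisson brackets containing every $q_{3f_{2m-1},f_{2m-3}}(C)$, resp.\ (for $m\ge3$) every $q_{3f_{2m-1},\,3f_{2m-1}-f_{2m-3}}(C)$.
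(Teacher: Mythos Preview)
Your overall strategy—apply Theorem~\ref{exc-pair-thm} to $X=\PP^2$ with suitable exceptional bundles—is exactly the paper's. The differences are in the two technical steps, and in one of them the paper's approach dissolves what you call your ``main obstacle.''

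For the construction of the bundles, the key observation you are missing is that since $(\cO,\cO(1),\cO(2))$ is a full exceptional collection on $\PP^2$, every object of the admissible subcategory $\langle\cO(1),\cO(2)\rangle$ automatically lies in the right orthogonal $\cO^\perp$, hence forms an exceptional pair with $\cO$. There is therefore no need to mutate inside full triples while tracking the position of $\cO$: one simply takes the helix $(E_i)$ in $\langle\cO(1),\cO(2)\rangle$ generated by $E_0=\cO(1)$, $E_1=\cO(2)$. The $K_0$-relations $[E_{-n}]=f_{2n+2}[E_0]-f_{2n}[E_1]$ and $[E_n]=f_{2n}[E_1]-f_{2n-2}[E_0]$ then give $\rk E_{-n}=f_{2n+1}$, $\rk E_n=f_{2n-1}$, $h^0(E_{\pm n})=3f_{2n\mp1}$, and each $E_i$ is (up to shift) an exceptional bundle. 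Your Markov-tree picture is of course consistent with this, but working inside the rank-two subcategory removes all the bookkeeping you flagged as the obstacle.

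For injectivity of $\kappa$, the paper uses a different and shorter device: every exceptional bundle on $\PP^2$ is $\GL_3$-equivariant, so $\kappa$ is a map of $\GL_3$-representations; since $H^0(\PP^2,\cO(3))$ is $\GL_3$-irreducible and $\kappa$ is nonzero (the bracket $q_{n,k}(C)$ is nonzero for $n>k+1$), the kernel vanishes. Your line-through-$[w_0]$ argument is a legitimate alternative that avoids the equivariance input, but the step ``a general line through $[w_0]$ has nonconstant $j$'' is not fully justified as written: nonconstancy of $j$ on $\PP^9$ does not by itself exclude $j$ factoring through projection from a particular point $[w_0]$. This can be patched (e.g.\ by exhibiting one explicit pencil through the given singular cubic with varying $j$), but the equivariance route is cleaner.
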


\begin{proof} We apply Proposition \ref{surface-antican-example-prop} for $X=\PP^2$ by taking $\cV$ to be any exceptional bundle such that $\cV\in \langle \cO(1),\cO(2)\rangle$.
Note that the assumptions are satisfied 
%it is well known that the restriction of an exceptional bundle to a smooth elliptic curve in $\PP^2$ will be stable, so the Poisson brackets
%corresponding to smooth anticanonical divisors will be FO-brackets.
The exceptional bundles we need form a helix $(E_i)$ in the category $\langle \cO(1),\cO(2)\rangle$, where $E_0=\cO(1)$, $E_1=\cO(2)$.
%be the sequence determined by $a_0=0$, $a_1=1$, $a_n=3a_{n-1}-a_{n-2}$. 
Then for $n\ge 0$, 
we have the following relations in the Grothendieck group 
$$[E_{-n}]=f_{2(n+1)}[E_0]-f_{2n}[E_1], \ \ [E_n]=f_{2n}[E_1]-f_{2(n-1)}[E_0].$$
Hence, for $n\ge 1$, we have
$$\rk E_{-n}=f_{2n+1}, \dim H^0(E_{-n})=3f_{2n-1}, \ \ \rk E_n=f_{2n-1}, \dim H^0(E_n)=3f_{2n+1}.$$
This leads to the linear maps from $H^0(\PP^2,\cO(3))$ to the spaces of bivectors on the claimed projective spaces whose image consist of compatible Poisson
brackets.

Finally, let us check that the linear maps 
$$H^0(\PP^2,\OO(3))\to H^0(\PP^N,{\bigwedge}^2 T_{\PP^N})$$
corresponding to our families of Poisson brackets are injective.
Since all exceptional bundles on $\PP^2$ are $\GL_3$-equivariant, 
the above map is compatible with $\GL_3$-action. Hence, the kernels of the above linear maps are $\GL_3$-subrepresentations
in $H^0(\PP^2,\OO(3))$. But the representation of $\GL_3$ on $H^0(\PP^2,\OO(3))$ is irreducible, so either the kernel is zero,
or the entire map is zero. Thus, it is enough to show that our construction does not give identically zero brackets. But this
follows from the well known fact that the Feigin-Odesskii bracket $q_{n,k}(C)$ associated with an elliptic curve $C$ is nonzero provided $n>k+1$ 
(this follows e.g., from Proposition \ref{Poisson-degeneration-prop}).
%Thus, from restrictions of $E_n$ we get $10$ compatible Poisson brackets on $\PP^{3f_{2n+1}-1}$, containing the FO-brackets
%$q_{f_{2n+1},f_{2n-1}}$. 
%For example, for $n=2$, $E_2=T_{\PP^2}(1)$, where $T_{\PP^2}$ is the tangent bundle, and we get
%brackets compatible with $q_{15,2}$.
\end{proof}

\begin{example}\label{OW-ex}
%Let $X$ be a ruled surface, $X=\PP \cV$, where $\cV$ is a rank $2$ bundle over a smooth projective
%curve $D$. Then 
%$$\om_X^{-1}\simeq p^*(\om_D^{-1}\det\cV)(2),$$
%so
%$$H^0(X,\om_X^{-1})=H^0(D, \om_D^{-1}\ot X^2\cV^\vee\ot \det\cV)$$
Let $X=F_n=\PP(\cO\oplus \cO(n))$, the Hirzebruch surface (or $\PP^1\times \PP^1$, for $n=0$), and let $p:X\to \PP^1$ be the projection.
Then 
$$\om_X^{-1}\simeq p^*(\cO(n+2))(2),$$
so 
$$H^0(X,\om_X^{-1})\simeq H^0(\PP^1,\cO(n+2)\oplus \cO(2)\oplus \cO(-n+2)).$$
For $|n|\le 3$, this is a $9$-dimensional vector space.
We can take 
$$\cV=L:=p^*(\cO(k))(1).$$
Then $Rp_*(L\ot\om_X)=0$, so $H^*(X,L\ot\om_X)=0$. Also, $Rp_*(L)\simeq \cO(k)\oplus \cO(k-n)$,
so for $k\ge n-1$, $H^1(X,L)=0$. Thus, the conditions of Proposition \ref{surface-antican-example-prop}
are satisfied in this case, and for $|n|\le 3$, we get a family of $9$ compatible Poisson brackets on the projective space $\PP^{2k+1-n}$.
Later we will show that the cases $n=1$ and $n=2$ correspond to the examples in Odesskii-Wolf \cite{OW} (see Sec.\ \ref{OW-sec}) and that the
corresponding $9$ brackets are linearly independent.
%(to $\cF_{odd}$ and $\cF_{ev}$ respectively). 
\end{example} 

\begin{prop}\label{biham5-prop} 
For any $d>r>0$ such that $d\equiv \pm 1\mod(r)$ and $r$ is odd and any elliptic curve $C$, 
the Poisson bracket $q_{d,r}(C)$ extends to a bihamiltonian structure.
\end{prop}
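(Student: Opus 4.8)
The plan is to obtain the statement from Theorem \ref{exc-pair-thm}(ii). The case $d=r+1$ is vacuous, since $q_{r+1,r}(C)$ vanishes identically (the remark after Lemma \ref{rank-classification-lem}), so I may assume $d\ge r+2$. What I then need to produce is a smooth projective surface $X$ with $H^{>0}(X,\OO_X)=0$ and $h^0(X,\om_X^{-1})>1$, carrying a pair of non-isomorphic smooth anticanonical divisors, together with an exceptional bundle $\cV$ on $X$ such that $(\OO_X,\cV)$ is an exceptional pair, $\rk(\cV)=r$ and $c_1(\cV)\cdot c_1(\om_X^{-1})=d$. Indeed, granting this, $\cV$ restricts to a stable bundle of rank $r$ and degree $d$ on every smooth anticanonical elliptic curve $C\subset X$, so $\kappa([C])=q_{d,r}(C)$ in the notation of Theorem \ref{exc-pair-thm}; moreover $d=c_1(\cV)\cdot c_1(\om_X^{-1})\ge r+2>\rk(\cV)+1$, so part (ii) of that theorem yields a bihamiltonian structure containing $q_{d,r}(C)$.

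For the surface I would take a del Pezzo surface $X$ of degree $4$, that is, the blow-up of $\PP^2$ at five points, which one may moreover place on two transverse smooth plane cubics: then $h^0(X,\om_X^{-1})=K_X^2+1=5$ (which accounts for the $5$ compatible brackets), and the two cubics lift to non-isomorphic smooth anticanonical divisors on $X$. Since $|-K_X|$ is base-point-free with smooth members of varying $j$-invariant, the anticanonical hypothesis of Theorem \ref{exc-pair-thm}(ii) holds in any case. Everything then reduces to the construction of $\cV$.

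To build $\cV$ I would start from a short exceptional collection of line bundles on $X$ and apply a sequence of mutations, in the spirit of the proof of Proposition \ref{Fibonacci-prop} where helices on $\PP^2$ are used; after an appropriate twist the resulting exceptional bundle sits in an exceptional pair $(\OO_X,\cV)$, and $\RHom(\cV,\cV)\simeq k$, $\RHom(\cV,\OO_X)=0$ can be read off the defining mutation triangles. The role of the hypotheses "$r$ odd" and "$d\equiv\pm1\pmod r$" is to make the relevant numerics solvable: the equation $\chi(\cV,\cV)=1$ forces $c_2(\cV)=\tfrac{(r-1)(r+1+c_1(\cV)^2)}{2r}$, which is a non-negative integer precisely when $r$ is odd and $c_1(\cV)^2\equiv-1\pmod r$, and, combined with the Chebyshev/Fibonacci-type recursions obeyed by $\rk$ and $c_1\cdot c_1(\om_X^{-1})$ along a helix and with the requirement $c_1(\cV)\cdot c_1(\om_X^{-1})=d$, this translates into $d\equiv\pm1\pmod r$. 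For each admissible $(d,r)$ one then selects the term of the helix of rank $r$ with $c_1\cdot c_1(\om_X^{-1})=d$.

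I expect the main obstacle to be this last point: verifying that the numerically admissible class is actually represented by an exceptional \emph{vector bundle} on the del Pezzo surface $X$, rather than merely by a class in $K_0$ or a two-term complex, and checking that $(\OO_X,\cV)$ is an exceptional pair. For the former I would rely on the description of exceptional sheaves on del Pezzo surfaces by iterated mutations of line bundles; for the latter, on direct computation with the mutation triangles. Once $X$ and $\cV$ are constructed, Theorem \ref{exc-pair-thm}(ii) applies and $q_{d,r}(C)$ extends to the asserted bihamiltonian structure.
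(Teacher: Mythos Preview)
Your strategy coincides with the paper's: work on a degree-$4$ del Pezzo surface $X$ and invoke Corollary \ref{del-Pezzo-cor} (the immediate consequence of Theorem \ref{exc-pair-thm}(ii)), reducing everything to the construction of an exceptional bundle $\cV$ of rank $r$ with $\chi(\cV)=c_1(\cV)\cdot(-K_X)=d$ such that $(\OO_X,\cV)$ is an exceptional pair. The gap you yourself flag---actually producing this bundle rather than only its $K_0$-class---is exactly where the paper's work lies, and your numerological discussion of $\chi(\cV,\cV)=1$, while consistent, does not by itself single out a starting pair whose helix realizes every admissible $(d,r)$.

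The paper's concrete trick is to view $X$ not as a blow-up of $\PP^2$ but as the blow-up of a Hirzebruch surface $F_n$ ($n\in\{0,1\}$, chosen according to the parity of $d$) at four points, and to begin with the explicit pair
\[
V_1=\pi^*\bigl(p^*\OO(k-1)(1)\bigr),\qquad V_2=\pi^*\bigl(p^*\OO(k)(1)\bigr)(-e_1-\cdots-e_4),
\]
which already lies in $\OO_X^{\perp}$. Here $\Ext^*(V_1,V_2)=\Ext^1(V_1,V_2)$ is $2$-dimensional, so the first right mutation is a genuine extension $0\to V_2^{\oplus 2}\to V_3\to V_1\to 0$; thereafter $\Ext^*(V_m,V_{m+1})=\Hom(V_m,V_{m+1})\simeq\k^2$ and the helix obeys $[V_{m+1}]=(m-1)[V_1]+m[V_2]$ (symmetrically for left mutations). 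The bundle property follows from \cite{KO}, Props.\ 2.9--2.10, once one notes $\rk V_{m-1}<2\rk V_m$. Writing $r=2m-1$, the choices $n=0$ and $n=1$ give $\chi=(2k-1)r\pm 1$ and $\chi=(2k-2)r\pm 1$ respectively, covering all $d\equiv\pm 1\pmod r$ of either parity. The passage through the Hirzebruch model and the specific twist by $-e_1-\cdots-e_4$ are what make both the numerics and the vector-bundle verification transparent; a ``generic'' mutation of line bundles on $X$ would not obviously do this.
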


\begin{proof} Let us realize $C$ is a smooth cubic in $\PP^2$ and consider the blow up $X$ of $\PP^2$ at $5$ generic points $p_0,p_1,\ldots,p_4$ on $C$
(so that no three are collinear). Then $X$ is a del Pezzo surface.
By Corollary \ref{del-Pezzo-cor}, it is enough to construct an exceptional bundle $E$ over $X$ of rank $r$ and $\chi(E)=d$ such that $(\OO_X,E)$ is an exceptional pair.

For the construction of $E$, it will be more convenient to view $X$ as the blow up of a Hirzebruch surface $F$ at $4$ points.
More precisely, we need two such realizations with $F=F_n$, where $n$ is either $1$ or $0$. 
First, we can identify the blow up of $\PP^2$ at $p_0$ with the Hirzebruch surface $F_1$
and then view $X$ as the blow up of $F_1$ at $p_1,p_2,p_3,p_4$. The second way, is to identify the blow up of $\PP^2$ at $p_0$ and $p_1$ with
the blow up of $F_0=\PP^1\times \PP^1$ at one point $p'_1$, so we can view $X$ as the blow up of $F_0$ at $p'_1,p_2,p_3,p_4$. 
%Note that $X$ has $5$-dimensional family of anticanonical divisors.
We denote by $\pi:X\to F_n$ the blow down map, and by $p:F\to \PP^1$ the $\PP^1$-fibration map and by $\OO(1)$ the corresponding line bundle on $F_n$,
as in Example \ref{OW-ex}.

We observe that any $E$ in the subcategory
$${\mathcal C}:=\langle \pi^*(p^*D(\PP^1)(1)),\OO_{e_1},\ldots,\OO_{e_4}\rangle,$$
where $e_i$ are exceptional divisors for $\pi$, will have $\Hom^*(E,\OO_X)=0$. 
Let us start with an exceptional pair
$$V_1=\pi^*(p^*\OO(k-1)(1)), \ \ V_2=\pi^*(p^*\OO(k)(1))(-e_1-e_2-e_3-e_4)$$
in ${\mathcal C}$. We have $\Ext^i(V_1,V_2)=0$ for $i\neq 1$, while
$\Ext^1(V_1,V_2)$ is $2$-dimensional. We claim that this
implies that in the helix generated by $V_1$ and $V_2$ we will find (up to a shift) vector bundles $V$ with
$$[V]=m[V_1]+(m-1)[V_2] \ \text{ and } [V]=(m-1)[V_1]+m[V_2]$$
for all $m\ge 1$. 

Indeed, let $V_3[1]$ denote the right mutation of $V_1$ through $V_2$, so that we have an exact triangle
$$V_1\to V_2^{\oplus 2}[1]\to V_3[1]\to\ldots$$
Then $V_3$ is an extension of $V_1$ by $V_3^{\oplus 2}$, so $[V_3]=[V_1]+2[V_2]$. Note that the space $\Ext^*(V_2,V_3)=\Hom(V_2,V_3)$ is $2$-dimensional.
and this property is preserved by the right mutations.
Using this we can check that the part of the helix $(V_2,V_3,V_4,\ldots)$ generated by $(V_2,V_3)$ consists of vector bundles satisfying $[V_{m+1}]=(m-1)[V_1]+m[V_2]$.
Indeed, the equality in $K_0$ follows by induction from the exact triangles
$$V_{m-1}\to V_m^{\oplus 2}\to V_{m+1}\to\ldots$$
Taking into account the fact that $\rk(V_{m-1})<2\rk(V_m)$, we see that $\und{H}^0(V_{m+1})\neq 0$. Since $V_{m+1}$ is an exceptional object, this implies that
it is a sheaf on $X$ (see \cite[Prop.\ 2.10]{KO}). Since it also has positive rank, it has to be a vector bundle (by \cite[Prop.\ 2.9]{KO}).
Similarly, considering left mutations of the pair $(V_1,V_2)$ we find vector bundles $V$ with $[V]=m[V_1]+(m-1)[V_2]$.

It is easy to check that we get the desired $r$ and $d$ this way. Namely, let us write $r=2m-1$ (recall that $r$ is odd).
If $d$ is even we use $n=0$, in which case $\chi(V_1)=2k$ and $\chi(V_2)=2k-2$, so we will get from the above $V$ either
$d=(2k-1)r+1$ or $d=(2k-1)r-1$.
If $d$ is odd we use $n=1$, in which case $\chi(V_1)=2k-1$ and $\chi(V_2)=2k-3$, and so, $d=(2k-2)r\pm 1$.
\end{proof}

\begin{remark}
In the situation of Proposition \ref{biham5-prop}, the dimension of $H^0(X,\om_X^{-1})$ is $5$, so we can expect
that there exists a $5$-dimensional linear space of compatible Poisson brackets on $\PP^{d-1}$ including $q_{d,r}(C)$.
\end{remark} 

\section{Explicit computations}\label{computations-sec}

\subsection{Szeg\"o kernels}

\subsubsection{Case of a bundle with vanishing cohomology}

Let $C$ be an elliptic curve with a fixed nonzero regular differential $\eta$.
Let $V$ be a vector bundle on $C$ such that $H^*(C,V)=0$.
Then there is a unique section called the {\it Szeg\"o kernel} (see e.g., \cite{Fay}),
$$S_V\in H^0(C\times C,V^\vee\boxtimes V(\De))$$
such that $\Res_{\De}(S_V)=\id_V$ (where we use the trivialization of $\om_C$).

\begin{example} Assume that we work over complex numbers, $C=\CC/\Lambda$, and
$V=M$, a nontrivial line bundle of degree zero. We can write $M=\OO_C(a-b)$.
Then one has
$$S_M(x,y)=\zeta(x-y)-\zeta(x-b)+\zeta(y-a)-\zeta(b-a),$$
where $\zeta$ is the Weierstrass zeta function.
We can trivialize the pull-back of $M$ to $\CC$ by the section $\th_{11}(x-b)/\th_{11}(x-a)$,
where $\th_{11}$ is the theta-function with zero at $x=0$. Then with respect to this trivialization,
$$S_M(x,y)=[\zeta(x-y)-\zeta(x-b)+\zeta(y-a)-\zeta(b-a)]\cdot\frac{\th_{11}(x-b)\th_{11}(y-a)}{\th_{11}(x-a)\th_{11}(y-b)}.$$
\end{example}

%Note that for $V=\bigoplus_i M_i$, where $M_i$ are nontrivial line bundles of degree zero, the Szeg\"o kernel
%$S_V$ will be the sum of $S_{M_i}$.

Note that since $H^*(C,V)=0$, the complex
$$H^0(C-p,V)\rTo{\de_V} H^0(C,V(\infty p)/V)$$
is exact. Here the target can be identified with the quotient $H^0(C,V(\infty p)|_{\infty p})/H^0(C,V|_{\infty p})$, where
$H^0(C,V|_{\infty p})$ is the completion of $V_p$ with respect to the $\fm_p$-adic topology, while 
$$H^0(C,V(\infty p)|_{\infty p})=\hat{V}_p\ot_{\hat{\OO}_{C,p}} K_p,$$
where $K_p$ is the field of fractions of $\hat{\OO}_{C,p}$.

Our goal is to get a formula for $\de_V^{-1}$ in terms of the Szeg\"o kernel $S_V$ (see Lemma \ref{acyclic-Sz-lem} below). 
In fact, for our computations later we will need the case where $V$ is a trivial bundle and the above concept of the Szeg\"o kernel has to
be modified (see Sec.\ \ref{Sz-O-sec}). However, we first consider the case of $V$ with vanishing cohomology since this case is more straightforward. 

%One can compute the expansion of $S_V(x,y)$ into a formal series in $x$ near a point $p\in C$ with coefficients
%in $H^0(C-p,V)$. 

We have a natural perfect duality
\begin{equation}\label{V-perfect-duality-eq}
H^0(C,V(\infty p)|_{\infty p})\ot H^0(C,V^\vee(\infty p)|_{\infty p})\to k: B(\phi,f):=\Res_p(\lan\phi, f\ran\cdot\eta).
\end{equation}
Also, we have direct sum decomposition
$$H^0(C,V(\infty p)|_{\infty p})=H^0(C-p,V)\oplus H^0(C,V|_{\infty p}),$$
$$H^0(C,V^\vee(\infty p)|_{\infty p})=H^0(C-p,V^\vee)\oplus H^0(C,V^\vee|_{\infty p}),$$
such that 
$$H^0(C,V|_{\infty p})=H^0(C,V^\vee|_{\infty p})^\perp, \ \ H^0(C,V(\infty p)|_{\infty p})=H^0(C,V^\vee(\infty p)|_{\infty p})^\perp$$
with respect to the above duality.

\begin{lemma}\label{acyclic-Sz-lem}
(i) For any $f\in H^0(C-p,V)$
one has 
$$\Res_{x=p} \lan f(x), S(x,y)\ran=-f(y).$$

\noindent
(ii) One has  
\begin{equation}\label{S-expansion-eq}
S_V|_{\infty p\times C\setminus p}=-\sum_{i\ge 1} \phi_i\otimes f_i,
\end{equation}
where $(\phi_i)$ and $(f_i)$ are dual bases of $H^0(C,V^\vee|_{\infty p})$ and $H^0(C-p,V)$.

\noindent
(iii) There is a well defined linear operator
$$Q_S: H^0(V(\infty p)/V)\to H^0(C-p,V): f\mapsto -\Res_{x=p} \lan f(x), S(x,y)\ran,$$
and we have $Q_S=\de_V^{-1}$.
\end{lemma}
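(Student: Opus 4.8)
The plan is to prove (i) by the residue theorem on $C$, deduce (ii) from (i) by expanding $S_V$ in suitable dual bases, and obtain (iii) from (i) together with the fact that, since $H^*(C,V)=0$, the boundary map $\de_V\colon H^0(C-p,V)\to H^0(V(\infty p)/V)$ is an isomorphism.

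For (i), fix $y\in C\setminus p$ and regard $\omega_y(x):=\langle f(x),S_V(x,y)\rangle\,\eta(x)$ as a $V_y$-valued meromorphic $1$-form on $C$. Since $f$ is regular on $C\setminus p$ and $S_V$ is regular off the diagonal, $\omega_y$ has poles only at $x=p$ and $x=y$. At $x=y$ the normalization $\Res_\De(S_V)=\id_V$ (taken with respect to the fixed trivialization of $\om_C$, which is built into the definition of $S_V$, and which is also what makes the bare symbol $\Res_{x=p}$ in the statement mean $\Res_p((-)\,\eta)$ as in \eqref{V-perfect-duality-eq}) gives $\Res_{x=y}\omega_y=\langle f(y),\id_{V_y}\rangle=f(y)$. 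The residue theorem $\sum_{q\in C}\Res_{x=q}\omega_y=0$ then yields $\Res_{x=p}\langle f(x),S_V(x,y)\rangle=-f(y)$; in particular the left-hand side, as a function of $y$, lies in $H^0(C-p,V)$.

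For (ii), recall that the direct sum decompositions and orthogonality relations displayed before the Lemma show that $B$ restricts to a perfect pairing between $H^0(C-p,V)$ and $H^0(C,V^\vee|_{\infty p})$, so that the (topological) dual basis $(\phi_i)$ of a basis $(f_i)$ of $H^0(C-p,V)$ is well defined. For fixed $y\in C\setminus p$ the section $x\mapsto S_V(x,y)$ is regular at $x=p$, hence expands as $S_V(x,y)|_{\infty p}=\sum_i a_i(y)\,\phi_i(x)$ with $a_i(y)=B\big(f_i,\,S_V(\,\cdot\,,y)|_{\infty p}\big)=\Res_{x=p}\langle f_i(x),S_V(x,y)\rangle=-f_i(y)$ by (i). This is precisely the asserted identity $S_V|_{\infty p\times C\setminus p}=-\sum_i\phi_i\otimes f_i$.

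For (iii), note first that $Q_S$ is well defined: changing the chosen Laurent representative of a class $f\in H^0(V(\infty p)/V)$ alters the integrand $\langle f(x),S_V(x,y)\rangle$ by something regular at $x=p$, which does not affect $\Res_{x=p}$. Since $H^1(C,V)=0$, every class $\bar f\in H^0(V(\infty p)/V)$ equals $\de_V(f)$ for some $f\in H^0(C-p,V)$, unique because $H^0(C,V)=0$; taking $f$ itself (expanded at $p$) as a representative of $\bar f=\de_V(f)$ and applying (i) gives $Q_S(\bar f)(y)=-\Res_{x=p}\langle f(x),S_V(x,y)\rangle=f(y)$. Hence $Q_S\circ\de_V=\id$, and since $\de_V$ is bijective, $Q_S=\de_V^{-1}$ (so in particular $Q_S$ does take values in $H^0(C-p,V)$). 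I expect the only delicate point is bookkeeping of conventions — the trivialization of $\om_C$ by $\eta$ and the compatibility of $\Res_\De(S_V)=\id_V$ with the scalar residue pairing $B$, plus the topological (infinite-dimensional) meaning of the dual bases $(f_i),(\phi_i)$; once these are pinned down, each step is either a single application of the residue theorem or a formal manipulation.
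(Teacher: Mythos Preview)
Your proof is correct and follows essentially the same line as the paper's: part (i) via the Residue Theorem on $C$ for fixed $y$, part (ii) by expanding $S_V|_{\infty p\times (C\setminus p)}$ in the completed tensor product and identifying the coefficients via (i) and the perfect pairing $B$, and part (iii) by checking well-definedness on regular representatives and reading off $Q_S\circ\de_V=\id$ from (i). Your treatment is slightly more explicit about the conventions (the role of $\eta$ in the residue pairing, the topological nature of the dual bases), but there is no substantive difference from the paper's argument.
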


\begin{proof} 
(i) Let us fix a generic $y$ and consider the restriction of $\lan f(x), S(x,y)\ran$ to $C\times y$. It has poles at $x=p$ and $x=y$,
and the residue at $x=y$ is equal to $f(y)$. Thus, the assertion follows from the Residue Theorem.

\noindent
(ii) First, we observe that $S_V|_{\infty p\times C\setminus p}$ lies in 
$$\varprojlim_n H^0(C,V^\vee|_{np})\ot H^0(C-p,V),$$
which can be viewed as a competed tensor product of $H^0(C,V^\vee|_{\infty p})$ and $H^0(C-p,V)$.
The right-hand side of \eqref{S-expansion-eq} also makes sense as an element of this completed tensor product.
Now the assertion follows from (i) and from perfect duality \eqref{S-expansion-eq}.

\noindent
(iii) Note that $Q_S$ is well defined since for regular $f$ the expression $\lan f(x), S(x,y)\ran$ will be regular at $x=p$.
The second assertion follows from (i). 
\end{proof}

\subsubsection{Case of the trivial bundle}\label{Sz-O-sec}

Now let us consider the case $V=\OO_C$. Here Szeg\"o kernel will depend on an extra datum. 
Let $D=p_1+\ldots+p_d$ be a simple divisor on an elliptic curve $C$ (so the points $p_1,\ldots,p_d$ are distinct). 
As before, we fix a trivialization $\eta$ of $\om_C$. We use this trivialization implicitly in formulas with residues.

\begin{definition} We say that $S\in H^0(C\times C,\OO(D)\boxtimes \OO(D)(\De))$ is a {\it left Szeg\"o kernel} for $D$
if we have
\begin{itemize}
\item $\Res_{\De}(S)=1$;
\item $\Res_{D\times C}(S)$ is constant along $D$.
\end{itemize}
If in addition $S(y,x)=-S(x,y)$ then we say that $S$ is a {\it Szeg\"o kernel}.
\end{definition}
 
\begin{example} In the case when $D=p$ has degree $1$, it is easy to check that there is a unique section 
$$S=S_p\in H^0(C\times C,\OO(p)\boxtimes \OO(p)(\De))$$
such that $S(y,x)=-S(x,y)$ and $\Res_{\De}(S)=1$. Hence, it is a Szeg\"o kernel for $p$.
Note that $-S(-x,-y)$ also satisfies these conditions, so we have $S(-x,-y)=-S(x,y)$.
In fact, for an elliptic curve over complex numbers, and $p$ corresponds to the origin, then one has
$$S(x,y)=\zeta(x-y)-\zeta(x)+\zeta(y),$$
where $\zeta$ is the Weierstrass zeta function.
\end{example}

%As before, we can compute the expansion of $S(x,y)$ into a formal series in $x$.

Let $t_i$ be the formal parameter on $C$ at $p_i$ such that $\eta=dt_i$, and let
us consider the vector space
$$V=V_D:=\bigoplus_{i=1}^d k(\!(t_i)\!).$$
We equip $V$ with the nondegenerate pairing 
$$(f,g)=\sum_{i=1}^p \Res_{t_i=0}(fg dt_i).$$
We have the isotropic subspaces 
$$\La:=\bigoplus_{i=1}^d k[\![t_i]\!]\sub V$$
and
$$\OO(C-D)\sub V,$$
where the embedding is given by expanding into Laurent series at $p_1,\ldots,p_d$.
The complex
$$\OO(C-D)\rTo{\de_\OO} V/\La$$
calculates $H^*(C,\OO)$, so $\OO(C-D)\cap \La=\lan 1\ran$ and $\OO(C-D)+\La$ is precisely the codimension $1$ subspace
$$V':=\{f\in V \ |\ \sum_i \Res_{t_i=0}(f dt_i)=0\}.$$ 
%\sub H^0(C,\OO(p)|_{\infty p})$ the subset of $\phi$ such that $\Res(t^{-1}\phi\cdot \om)=0$,
%In other words, 
%$$\La=\CC t^{-1}\oplus t\CC[\![t]\!].$$

%The form $B(\phi,f)=\Res_p(\phi f\cdot\om)$ gives a nondegenerate symmetric form on 
%$H^0(C,\OO(\infty p)|_{\infty p})=\CC(\!(t)\!)$, and we have a direct sum decomposition
%$$H^0(C,\OO(\infty p)|_{\infty p})=H^0(C,\OO(\infty p))\oplus \La$$
%into isotropic summands, so we get a perfect duality between
%$\La$ and $H^0(C,\OO(\infty p))=H^0(C\setminus p,\OO)$.

We have the following analog of Lemma \ref{acyclic-Sz-lem}.
Let us set 
$$\La'=\{(f_i)_{i=1,\ldots,d}\in \La \ |\ \sum_i f_i(0)=0\}.$$
Note that $\de_\OO$ factors through an embedding
$$\de'_\OO:\OO(C-D)\to V/\La'.$$

\begin{lemma}\label{Sz-O-lem}
Let $f$ be a left Szeg\"o kernel for $D$.

\noindent
(i) For any $f\in \OO(C-D)$
one has 
$$\sum_{i=1}^d\Res_{x=p_i} (f(x)S(x,y))=-f(y).$$

%\noindent
%(ii) Expansion???

\noindent
(ii) We have a well defined operator 
$$Q'_S: V/\La'\to \OO(C-D): f\mapsto -\sum_{i=1}^d\Res_{x=p_i} (f(x)S(x,y)),$$
such that 
$$Q'_S\de'_\OO(f)=f.$$
%$$\de_{\OO}Q'_S(g)=g \mod\La.$$
Here we view elements of $V$ as functions on a punctured formal neighborhood of $D$.
%One has 
%\begin{equation}\label{S-trivial-expansion-eq}
%S|_{\infty p\times C\setminus p}=-\sum \phi_i\otimes f_i,
%\end{equation}
%where $(\phi_i)$ and $(f_i)$ are dual bases of $\La$ and $H^0(C\setminus p,\OO)$.
\end{lemma}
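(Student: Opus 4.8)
The plan is to mimic the proof of Lemma \ref{acyclic-Sz-lem}, the only new ingredient being that the (nonexistent) Szeg\"o kernel $S_\OO$ is replaced by a left Szeg\"o kernel $S$ for $D$, and the extra normalization ``$\Res_{D\times C}(S)$ constant along $D$'' is what makes things compatible with the subspace $\La'$. For (i), I would fix a generic point $y\in C\setminus D$ and consider the meromorphic $1$-form $\omega_y:=f(x)\,S(x,y)\,\eta$ on $C$ in the variable $x$. Since $f\in\OO(C-D)$ has poles only along $D$, and $S(\cdot,y)$, as a section of $\OO(D)$ in its first argument with at most a simple pole along the diagonal, is regular on $C\setminus(D\cup\{y\})$, the form $\omega_y$ is holomorphic away from $D\cup\{y\}$; moreover its residue at $x=y$ is $f(y)$ because $\Res_\De(S)=1$. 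The Residue Theorem then gives $\sum_{i=1}^d\Res_{x=p_i}(f(x)S(x,y))+f(y)=0$, which is the asserted identity; it holds for all $y\in C\setminus D$ since both sides are rational in $y$.

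For (ii), the first point is that $Q'_S$ is a well defined map $V\to\OO(C-D)$. Writing the local expansion of $S$ near $x=p_i$ as
\[
S(x,y)\,\eta=\Bigl(\frac{c(y)}{t_i(x)}+\sum_{j\ge 0}s^{(i)}_j(y)\,t_i(x)^j\Bigr)\,dt_i(x),
\]
where $c(y)$ (the residue of $S$ along $\{p_i\}\times C$) and the $s^{(i)}_j(y)$ are rational functions of $y$ with poles only along $D$ (this is immediate from $S$ being an algebraic section of $\OO(D)\boxtimes\OO(D)(\De)$), one sees that for $f=(f_i)\in V$ with each $f_i$ of finite pole order the residue $\Res_{x=p_i}(f_i(x)S(x,y))$ is a finite $k$-linear combination of $c(y)$ and the $s^{(i)}_j(y)$, hence lies in $\OO(C-D)$. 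Therefore $Q'_S(f)\in\OO(C-D)$.

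The second point is that $Q'_S$ kills $\La'$, so that it descends to $V/\La'$: if $f=(f_i)\in\La$ then only the polar part of $S$ contributes and $\Res_{x=p_i}(f_i(x)S(x,y))=f_i(0)\,c(y)$, where by the defining property of a left Szeg\"o kernel the residue $c(y)=\Res_{x=p_i}(S(x,y)\eta)$ is independent of $i$; hence $\sum_i\Res_{x=p_i}(f_i(x)S(x,y))=c(y)\sum_i f_i(0)$, which vanishes exactly when $\sum_i f_i(0)=0$, i.e.\ when $f\in\La'$. Finally, the identity $Q'_S\de'_\OO(f)=f$ is just part (i): the class $\de'_\OO(f)$ is represented by the Laurent expansions of $f$ at the $p_i$, and the residues computed from those expansions are the ones appearing in (i), so $Q'_S(\de'_\OO(f))(y)=-\sum_i\Res_{x=p_i}(f(x)S(x,y))=f(y)$. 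The only mildly technical step is the claim in the previous paragraph that $Q'_S(f)$ genuinely lands in $\OO(C-D)$ rather than in some completion; this I expect to be the main thing worth writing out, and it follows from the explicit form of the local expansion of $S$ together with the finiteness of the pole orders of the components $f_i$.
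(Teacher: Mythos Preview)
Your proposal is correct and follows essentially the same approach as the paper: part (i) is the Residue Theorem applied to $f(x)S(x,y)\eta$ for fixed $y$, and part (ii) combines the observation that $S$ has only simple poles along $D\times C$ with constant residue (so $Q'_S$ kills $\La'$) with part (i). You are somewhat more thorough than the paper in explicitly arguing that the image of $Q'_S$ lands in $\OO(C-D)$ via the local expansion of $S$; the paper takes this for granted.
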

 
\begin{proof}
(i) This immediately follows from the Residue Theorem (for fixed $y$).

\noindent
(ii) Let us first check that $Q'_S$ is well defined. Since $S(x,y)$ has poles of order $1$ at $D$, for $f\in \La'$, one has 
$$\sum_{i=1}^d\Res_{x=p_i} (f(x)S(x,y))=\sum_i f(p_i)\Res_{x=p_i}S(x,y).$$
But by assumption, $\Res_{x=p_i}S(x,y)$ does not depend on $i$, and $\sum_i f(p_i)=0$, so this is zero.

The equality $Q'_S\de'_\OO(f)=f$ follows from (i). 
\end{proof}

%Let $\si:V/\La\to V/\La'$ be any $k$-linear section of the natural projection $V/\La'\to V/\La$.
%Consider the operator 
%$$Q_S=Q'_S\circ\si:V/\La\to \OO(C-D).$$

\begin{corollary}\label{Cech-Sz-cor} 
For any $f\in \OO(C-D)$ and any lifting $\wt{\de_{\OO}(f)}\in V/\La'$ of $\de_{\OO}(f)\in V/\La$, one has
$$Q'_S\wt{\de_{\OO}(f)}\equiv f \mod \lan 1\ran.$$
For any $g\in V'/\La\sub V/\La$ and any lifting $\wt{g}\in V'/\La'$ of $g$ one has 
$$\de_{\OO}Q'_S(\wt{g})=g.$$
\end{corollary}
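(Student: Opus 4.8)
The plan is to deduce both assertions from Lemma~\ref{Sz-O-lem}, after first isolating one elementary fact about left Szeg\"o kernels: the residue $\Res_{x=p_i}(S(x,y))$ equals the \emph{constant} $-1/d$. By the defining property of a left Szeg\"o kernel this residue is independent of $i$; write it $r(y)$. Applying Lemma~\ref{Sz-O-lem}(i) to the constant function $1\in\OO(C-D)$ gives $\sum_{i=1}^{d}r(y)=-1$, hence $r(y)=-1/d$ (using that $d$ is invertible in $k$).

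The consequence I would then record is that $Q'_S$ sends $\La/\La'\subset V/\La'$ into the line $\lan 1\ran$. Indeed, for $\lambda=(\lambda_i)\in\La$ each $\lambda_i$ is regular at $p_i$ and $S$ has at most a simple pole along $\{x=p_i\}$, so $\Res_{x=p_i}(\lambda_i(x)S(x,y))=\lambda_i(p_i)\,r(y)$, and therefore $Q'_S(\lambda)=-\big(\sum_i\lambda_i(p_i)\big)r(y)=\frac1d\sum_i\lambda_i(p_i)\in\lan 1\ran$. (When $\lambda\in\La'$ this recovers $Q'_S=0$ on $\La'$, consistently with Lemma~\ref{Sz-O-lem}(ii).)

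With this the first assertion is immediate. Given $f\in\OO(C-D)$ and any lift $\wt{\de_\OO(f)}\in V/\La'$ of $\de_\OO(f)$, the element $\de'_\OO(f)$ is also a lift, because $\de_\OO$ factors through $\de'_\OO$; hence $\wt{\de_\OO(f)}=\de'_\OO(f)+\bar\lambda$ with $\bar\lambda\in\La/\La'$. Then $Q'_S(\de'_\OO(f))=f$ by Lemma~\ref{Sz-O-lem}(ii) and $Q'_S(\bar\lambda)\in\lan 1\ran$ by the previous step, so $Q'_S(\wt{\de_\OO(f)})\equiv f\pmod{\lan 1\ran}$.

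For the second assertion, given $g\in V'/\La$ and a lift $\wt g\in V'/\La'$, I would use that $\de_\OO\colon\OO(C-D)\to V'/\La$ is surjective (the identity $\OO(C-D)+\La=V'$) to write $g=\de_\OO(f)$ with $f\in\OO(C-D)$. Since $\OO(C-D)$ and $\La$ both lie in $V'$, $\de'_\OO(f)$ is a lift of $g$ lying in $V'/\La'$, so $\wt g=\de'_\OO(f)+\bar\lambda$ with $\bar\lambda\in\La/\La'$ and $Q'_S(\wt g)=f+c$ for a constant $c$; applying $\de_\OO$ and using that constants lie in $\ker\de_\OO$ gives $\de_\OO Q'_S(\wt g)=\de_\OO(f)=g$. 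There is no substantial obstacle here: the only thing that needs care is checking that the ambiguity in the choice of lift always lies in $\La/\La'$ and that $Q'_S$ sends it into $\lan 1\ran$ rather than merely into $\OO(C-D)$ — which is precisely the role of the left Szeg\"o condition, entering through the computation $r\equiv -1/d$.
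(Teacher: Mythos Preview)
Your proof is correct and follows the same overall strategy as the paper: the ambiguity in a lift lies in $\La/\La'$, and one checks that $Q'_S$ sends this into $\lan 1\ran$, after which the second assertion follows from the first via surjectivity of $\de_\OO$ onto $V'/\La$. The paper handles the key step more economically: instead of computing the residue $r(y)=-1/d$ explicitly, it simply observes that the constants $c\in k\subset\OO(C-D)$ already surject onto $\La/\La'$ under $\de'_\OO$ (since $\de'_\OO(c)=(c,\ldots,c)$ has $\sum_i c=dc$), so every lift of $\de_\OO(f)$ is of the form $\de'_\OO(f+c)$, and then Lemma~\ref{Sz-O-lem}(ii) gives $Q'_S\de'_\OO(f+c)=f+c$ directly. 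Your explicit residue computation is a perfectly valid alternative and has the small virtue of pinning down exactly which constant appears.
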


\begin{proof}
The first equality follows immediately from Lemma \ref{Sz-O-lem}(ii) since
$$\wt{\de_\OO(f)}=\de'_\OO(f+ c)\in V/\La'$$
for some $c\in k$. 

Given $g\in V'/\La$, we can find $f\in \OO(C-D)$ such that 
$g=\de_\OO(f)$. Now the second equality follows from the first, since $\de_\OO(1)=0$. 
\end{proof}
 
\begin{lemma}\label{alg-for-Sz-lem} 
Let $C$ be an elliptic curve with a divisor $D$ such that either
\begin{enumerate}
\item $D=p$ and $C-p$ is the curve $y^2=P(x)$ in $\AA^2$, where $P$ is a cubic polynomial, or
\item $D=p_1+p_2$ and $C-D$ is the curve $y^2=P(x)$ in $\AA^2$, where $P$ is a quartic polynomial.
\end{enumerate}
%if we choose $x\in H^0(\OO(2p))$, $y\in H^0(\OO(3p))$, such that $y^2=x^3+px+q$, 
As a trivialization of $\om_C$ in both cases we take $\eta=dx/2y$. Then
$$S:=\frac{y_1+y_2}{x_2-x_1}$$
is a Szeg\"o kernel on $C$.
\end{lemma}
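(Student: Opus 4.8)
The plan is to verify the two defining properties of a Szeg\"o kernel directly for the explicit function $S = (y_1+y_2)/(x_2-x_1)$, where $(x_i,y_i)$ are coordinates on the $i$-th factor. First I would check that $S$, as written, indeed defines a section of $\OO(D)\boxtimes\OO(D)(\De)$ on $C\times C$. The denominator $x_2 - x_1$ vanishes on the locus $\{x_1 = x_2\}$, which is the union of the diagonal $\De$ and the "anti-diagonal" $\{(q,-q)\}$ (using the group law with $p$, resp.\ with $p_1+p_2$, as the relevant origin). On the anti-diagonal one has $y_2 = -y_1$, so the numerator $y_1+y_2$ vanishes there as well, and a local computation shows this zero cancels the pole of the denominator; hence $S$ has a pole only along $\De$. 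The order of that pole is $1$: near a point of $\De$ with $x_1\ne$ (branch points), $x_2-x_1$ has a simple zero, and $y_1+y_2 = 2y_1 + O(x_2-x_1)$ is a nonzero local function. One must also check the behavior along $D\times C$ and $C\times D$, i.e.\ where $x$ goes to infinity on one factor: in case (1) the point $p$ is the point at infinity of $y^2 = P(x)$ with $P$ cubic, and one reads off from the Laurent expansions at infinity that $S$ acquires exactly a simple pole in each variable along $p$; in case (2) the two points $p_1,p_2$ lie over $x=\infty$ of $y^2=P(x)$ with $P$ quartic, and a similar expansion gives simple poles along $p_1+p_2$ in each variable. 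This establishes that $S$ is a section of the claimed line bundle.

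Next I would compute $\Res_\De(S)$ with respect to the trivialization $\eta = dx/2y$. Parametrizing $C$ by $x$ locally (away from branch points) and writing the diagonal as $x_2 = x_1 + \varepsilon$, we get $y_2 = y_1 + \tfrac{P'(x_1)}{2y_1}\varepsilon + O(\varepsilon^2)$, so $y_1 + y_2 = 2y_1 + O(\varepsilon)$ and $x_2 - x_1 = \varepsilon$. Hence $S = 2y_1/\varepsilon + O(1)$, and the residue of $S$ along $\De$ in the variable whose conormal is dual to $\eta = dx/2y$ — equivalently, contracting the $1$-form $dx_2/2y_2$ — is $(2y_1)\cdot\tfrac{1}{2y_1} = 1$. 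So $\Res_\De(S) = 1$. Since $S$ is manifestly antisymmetric, $S(y,x) = (y_2+y_1)/(x_1-x_2) = -S(x,y)$, the second defining property ("$\Res_{D\times C}(S)$ constant along $D$", automatically implied by antisymmetry once $\Res_\De = 1$ and the pole structure is as above — antisymmetry exchanges the roles of $D\times C$ and $C\times D$ and forces the residue along $D$ in one variable to be the negative transpose, which for a constant is consistency of a single scalar) is immediate, and in fact $S$ is a full Szeg\"o kernel, not merely a left one. Concretely, one checks $\Res_{x_1 = p_i}S(x_1,y)$ is a constant (independent of $y$ and of $i$): expanding $S$ near $x_1 = p$ (case 1) or $x_1 = p_j$ (case 2) at infinity in $x$, the leading term is a constant in $y$, which gives the claim.

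The main obstacle I expect is the bookkeeping at the points of $D$, i.e.\ at infinity of the affine curve $y^2 = P(x)$: one must be careful about which sheet of $y$ one is on and about the local uniformizer (the function $x$ has a double pole at the point at infinity in case (1) and $y$ has a triple pole; in case (2) $x$ has a simple pole at each of $p_1,p_2$ and $y$ a double pole), and verify that in these coordinates $\eta = dx/2y$ is a regular nonvanishing differential and that $S$ has exactly simple poles along $D$ in each variable with the right residue to be "constant along $D$". Everything away from $D$ and away from the branch points of $x$ is a short local computation; the branch points themselves need a separate glance (there $x$ is not a uniformizer, but $S$ is visibly regular there since $x_2 - x_1$ has a simple zero only along $\De$ and the anti-diagonal, neither of which passes through a branch-point-times-itself in a way that changes the order), and the diagonal residue computation above is the heart of the matter. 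I would organize the write-up as: (a) pole analysis and section claim; (b) antisymmetry; (c) diagonal residue $= 1$; (d) residue along $D$ is constant — concluding $S$ is a Szeg\"o kernel.
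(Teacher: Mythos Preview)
Your proposal is correct and follows essentially the same route as the paper: verify antisymmetry, compute $\Res_\Delta(S)=1$ by a local expansion near the diagonal, and then analyze the pole along $D\times C$ (the paper does the diagonal residue via the $2$-form $S\cdot\eta_1\wedge\eta_2$, and in case~(2) expands explicitly in the local parameter $t=1/x$ to find residue $1/2$ at each $p_i$). One caution: your parenthetical claim that antisymmetry automatically forces $\Res_{D\times C}(S)$ to be constant along $D$ is not right --- antisymmetry only relates $\Res_{D\times C}$ to $\Res_{C\times D}$, not the values at $p_1$ versus $p_2$ --- but your concluding ``Concretely, one checks\ldots'' sentence supplies the correct direct verification, matching what the paper does.
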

 
\begin{proof}
To calculate the residue along the diagonal, we consider the residue of the $2$-form
$$S\cdot\eta_1\wedge\eta_2=\frac{y_1+y_2}{4y_1y_2(x_2-x_1)}\cdot dx_1\wedge dx_2=\frac{y_1+y_2}{4y_1y_2}\cdot dx_1\wedge\frac{d(x_2-x_1)}{x_2-x_1},$$
so the residue is
$$\frac{y_1+y_2}{4y_1y_2}\cdot dx_1|_{\De}=\frac{2y}{4y^2} dx=\eta.$$

Note that $S(y,x)=-S(x,y)$.
Thus, it remains to study the polar part of $S$ near $x_1\in D$. 
In case (1), since $x_1$ has a pole of order $2$ at $D=p$ and $y_1$ has a pole of order $3$, we see that $S$ has a pole of order $1$ at $x_1=p$.
In case (2), let $P(x)=ax^4+\ldots$, where $a\neq 0$. Then we can take $t=1/x$ as a local parameter at both $p_1$ and $p_2$.
In terms of this parameter, $y$ has an expansion  
$$y=\frac{\sqrt{a}}{t^2}+\ldots$$
at $p_1$ (for some choice of $\sqrt{a}$; for $p_2$ it would be a different choice of the square root).
Hence, $\eta$ and $S$ have the expansions (for fixed $x_2,y_2$)
$$\eta(t)=(-\frac{1}{2\sqrt{a}}+\ldots)\cdot dt,$$
$$S(t;x_2,y_2)=\frac{\sqrt{a}/t^2+\ldots}{-1/t+x_2}=-\frac{\sqrt{a}}{t}+\ldots$$
Hence $S$ has a pole of order $1$ and
$$\Res_{(x_1,y_1)=p_1}(S\cdot\eta(x_1,y_1))=\frac{1}{2}.$$
The same calculation works for $p_2$, so we deduce that $S$ is a Szeg\"o kernel for $D$.
\end{proof} 
 
\subsection{Massey product in terms of Szeg\"o kernel}

Now we can present the formula for the Massey product in terms of the Szeg\"o kernel.
Assume $\xi$ is a line bundle of positive degree on $C$, $D\sub C$ a simple divisor.

The multiplication with a Szeg\"o kernel $S=S_D\in H^0(C^2,\OO(D)\boxtimes \OO(D) (\De))$ induces a morphism
\begin{equation}\label{mult-Szego-eq}
%H^1(\xi^{-1}(-p)\boxtimes \xi (-\De))\rTo{S\cdot ?} H^1(\xi^{-1}\boxtimes \xi(p))\simeq H^1(\xi^{-1})\ot H^0(\xi(p)).
{\bigwedge}^2H^0(C,\xi)\rTo{\mu_S}  H^0(C,\xi(D))\ot H^0(C,\xi(D))
\end{equation}
that fits into a commutative diagram
%\begin{diagram}
\[\xymatrix{
\bigwedge^2H^0(C,\xi)\ar[r]^{\mu_S}\ar[d]  & H^0(C,\xi(D))\ot H^0(C,\xi(D))\ar[d]\\
H^0(C\times C,\xi\boxtimes \xi)\ar[r]^{S\cdot } & H^0(C\times C,\xi(D)\boxtimes \xi(D)(\De))}
\]
Indeed, this follows easily from the fact that the residue of $S$ along the diagonal is equal to $1$.
%\end{diagram}

%Let us start with $x\in \lan \phi\ran^\perp$. 
%We can view $\phi\ot x\in H^1(\xi^{-1})\ot H^0(\xi)$ as an element
%of $H^1(\xi^{-1}(-p)\boxtimes \xi (-\De))$

\begin{prop}\label{Massey-Sz-prop} 
Let $S$ be a left Szeg\"o kernel for $D$.
Then for $\phi\in H^1(C,\xi^{-1})$ and $s_1,s_2\in \lan\phi\ran^{\perp}\sub H^0(C,\xi)$, one has
%$$\lan MP(\phi,s_1,\phi),s_2\ran
$$\lan \phi, MP(s_1,\phi,s_2)\ran=\pm \lan \wt{\phi}\ot\wt{\phi}, \mu_S(s_1\wedge s_2)\ran,$$
where $\wt{\phi}$ is a lifting of $\phi$ to $H^1(C,\xi^{-1}(-D))$;
on the left we use a pairing between $\lan\phi\ran\sub H^1(C,\xi^{-1})$ and $H^0(C,\xi)/\lan s_1,s_2\ran$;
on the right we use the Serre duality between $H^1(C,\xi^{-1}(-D))$ and $H^0(C,\xi(D))$. 
%is equal to $\lan 1\ot \phi, S\cdot(\phi\ot s)\ran$.
\end{prop}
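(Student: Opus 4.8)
The plan is to compute the triple Massey product $MP(s_1,\phi,s_2)$ by hand, using the extension bundle attached to $\phi$ together with the operator $Q'_S$ inverting the Cech differential (Lemma \ref{Sz-O-lem}, Corollary \ref{Cech-Sz-cor}), and then to recognize the resulting residue expression as $\lan\wt\phi\ot\wt\phi,\mu_S(s_1\we s_2)\ran$. First I would reduce the Massey product to a composition of sheaf maps. Let $E=E_\phi$ be the extension $0\to\OO_C\to E\to\xi\to 0$ with class $\phi\in\Ext^1(\xi,\OO_C)=H^1(C,\xi^{-1})$, so that $E$ is the fibre of $\phi\colon\xi\to\OO[1]$ and $E[1]$ is its cone. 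Since $\xi$ is a line bundle, $s_i\in\lan\phi\ran^{\perp}$ is equivalent to the vanishing of the cup product $\phi\cup s_i$ in $\Ext^1$; hence $s_2\colon\OO\to\xi$ lifts to a sheaf morphism $\wt s_2\colon\OO\to E$, and $s_1\colon\OO\to\xi$ extends to $\bar s_1\colon E\to\xi$ with $\bar s_1|_{\OO_C}=s_1$. The standard recipe for triple Massey products (cf.\ \cite[Sec.\ 2]{Cohen}) --- lift $s_2$ through the fibre $E$ of $\phi$, extend $s_1$ through the cone $E[1]$ of $\phi$, then compose --- gives $MP(s_1,\phi,s_2)\equiv\pm\,\bar s_1\circ\wt s_2 \mod \lan s_1,s_2\ran$, and $\sigma:=\bar s_1\circ\wt s_2$ is an honest global section of $\xi$.

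Next I would make $\sigma$ explicit in the adelic presentation of Section \ref{Sz-O-sec}: represent $\phi$ by a cocycle $\phi_{0\infty}$, namely a collection of Laurent tails of $\xi^{-1}$ along $D$. Lifting $s_2$ amounts to finding $g_2^{(0)}\in\OO(C-D)$ and $g_2^{(1)}\in\La$ with $g_2^{(0)}-g_2^{(1)}=\phi_{0\infty}s_2$ on the punctured neighbourhood; here $\phi_{0\infty}s_2$ is the Cech representative of $\phi\cup s_2\in H^1(\OO)$, so by Serre duality the condition $s_2\in\lan\phi\ran^{\perp}$ is exactly that $\phi_{0\infty}s_2$ has total residue zero, i.e.\ lies in $V'$. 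Corollary \ref{Cech-Sz-cor} then lets me take $g_2^{(0)}=Q'_S(\wt{\phi_{0\infty}s_2})$, that is
$$g_2^{(0)}(y)=-\sum_i\Res_{x=p_i}\big(\phi_{0\infty}(x)s_2(x)S(x,y)\big).$$
In the same way, extending $s_1$ produces $\psi_1^{(0)}\in\OO(C-D)$ with $\psi_1^{(0)}(y)=\sum_i\Res_{x=p_i}(s_1(x)\phi_{0\infty}(x)S(x,y))$ and $\psi_1^{(1)}\in\La$ with $\psi_1^{(1)}-\psi_1^{(0)}=s_1\phi_{0\infty}$. Composing $\bar s_1\circ\wt s_2$ in the two charts yields $\sigma|_{C-D}(y)=s_1(y)g_2^{(0)}(y)+\psi_1^{(0)}(y)s_2(y)$, while the expression $s_1 g_2^{(1)}+\psi_1^{(1)}s_2$ over the formal neighbourhood confirms that $\sigma$ is regular along $D$; the constant ambiguity in $Q'_S$ changes $\sigma$ only by an element of $\lan s_1,s_2\ran$, hence disappears after pairing with $\phi$.

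It then remains to match the two sides. Since $\Res_{\De}(S)=1$, the section $\mu_S(s_1\we s_2)(x,y)=S(x,y)\big(s_1(x)s_2(y)-s_2(x)s_1(y)\big)$ lies in $H^0(C,\xi(D))\ot H^0(C,\xi(D))$, and Serre duality identifies this space with the dual of $H^1(C,\xi^{-1}(-D))\ot H^1(C,\xi^{-1}(-D))$, where a lift $\wt\phi$ of $\phi$ (one exists because $H^1(\xi^{-1}(-D))\twoheadrightarrow H^1(\xi^{-1})$) is again represented by the Laurent tails $\phi_{0\infty}$ and the pairing with $g\in H^0(\xi(D))$ is $\sum_i\Res_{p_i}(\phi_{0\infty}g)$. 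Contracting the first tensor factor of $\mu_S(s_1\we s_2)(\cdot,y)$ against $\wt\phi$ and splitting the residue at each $p_i$ (legitimate since $p_i\ne y$) reproduces $s_2(y)\psi_1^{(0)}(y)+s_1(y)g_2^{(0)}(y)=\sigma(y)$; contracting the remaining factor against $\wt\phi$ then gives $\lan\wt\phi,\sigma\ran$. As $\sigma$ has no pole along $D$, this equals $\lan\phi,\sigma\ran$ --- the surjection $H^1(\xi^{-1}(-D))\twoheadrightarrow H^1(\xi^{-1})$ being Serre-dual to $H^0(\xi)\hookrightarrow H^0(\xi(D))$ --- and since $\phi$ annihilates $s_1$ and $s_2$ it is independent of the representative $\sigma$ of the Massey product and of the choice of lift $\wt\phi$. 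Hence $\lan\wt\phi\ot\wt\phi,\mu_S(s_1\we s_2)\ran=\lan\phi,\sigma\ran=\pm\lan\phi,MP(s_1,\phi,s_2)\ran$.

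The main obstacle will be the bookkeeping in the first two steps: pinning down the triangulated Massey-product recipe in terms of the concrete Cech $0$-cochains $g_2^{(\bullet)},\psi_1^{(\bullet)}$ (identifying the connecting maps and the overall sign), and verifying that the residue identities of Lemma \ref{Sz-O-lem} and Corollary \ref{Cech-Sz-cor}, stated for genuine sections of $\OO_C$, apply verbatim to the ``mixed'' products $\phi_{0\infty}s_2$ and $s_1\phi_{0\infty}$ on the punctured neighbourhood of $D$ --- in particular that $\lan\phi,s_i\ran=0$ is exactly the total-residue-zero hypothesis needed there, so that only a \emph{left} Szeg\"o kernel (not a genuinely antisymmetric one) is used throughout.
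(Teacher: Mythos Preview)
Your approach is correct and lands on the same residue identity as the paper. The paper's proof is slightly more streamlined in one respect: rather than passing through the extension bundle $E$ and the triangulated recipe $\sigma=\bar s_1\circ\wt s_2$, it invokes the dg-recipe for the Massey product directly on the Cech model, setting $f_i:=Q'_S(s_i\wt\phi)$ and writing $MP(s_1,\phi,s_2)=f_1s_2-s_1f_2\bmod\lan s_1,s_2\ran$. Your $g_2^{(0)}$ and $-\psi_1^{(0)}$ are exactly the paper's $f_2$ and $f_1$, and your $\sigma=s_1g_2^{(0)}+\psi_1^{(0)}s_2$ is $-(f_1s_2-s_1f_2)$, so the two representatives agree up to the harmless sign. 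The ``main obstacle'' you anticipate --- translating the triangulated recipe into Cech cochains --- is precisely what the paper bypasses by working in the dg-enhancement from the outset; once that shortcut is taken, both proofs are line-for-line the same computation with $Q'_S$ followed by the double residue $\tau_x\tau_y$.
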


\begin{proof}
We compute this Massey product using the dg-enhancement given by the Cech resolutions corresponding to the covering by $C-D$ and the formal neighborhood of $D$.
%and using the homotopy operator $Q_\OO$.
Let us represent $\phi$ by a $1$-cocycle $\phi\in H^0(\xi^{-1}(\infty D)/\xi^{-1})$. Then with the notation of Sec.\ \ref{Sz-O-sec} we have
$$s_1\phi, s_2\phi\in V'/\La\sub V/\La$$
(this follows from the fact that both $s_1\phi$ and $s_2\phi$ have trivial cohomology class in $H^1(C,\OO)$).
Let us choose a lifting of $\phi$ to $\wt{\phi}\in H^0(\xi^{-1}(\infty D)/\xi^{-1}(-D))$.
Then for $i=1,2$, $s_i\wt{\phi}$ is an element of $V'/\La_1$ lifting $s_i\phi$, where
$$\La_1:=\bigoplus_{i=1}^d t_ik[\![t_i]\!]\sub \La.$$
Hence, 
$$f_i:=Q'_\OO(s_i\wt{\phi})$$
is a well defined element of $\OO(C-D)$ satisfying
$$\de_\OO(f_i)=s_i\phi$$
(see Corollary \ref{Cech-Sz-cor}).
Therefore, the dg-recipe for calculating the Massey product gives
$$MP(s_1,\phi,s_2)=f_1s_2-s_1f_2 \mod \lan s_1,s_2\ran.$$
Now we recall the definition of $Q'_\OO$:
$$(f_1s_2)(y)=Q'_\OO(s_1\wt{\phi})s_2=-\tau_x[S(x,y)\wt{\phi}(x)s_1(x)s_2(y)],$$
where
$$\tau_x=\sum_{i=1}^d \Res_{x=p_i}.$$
Similarly,
$$(s_1f_2)(y)=-\tau_x[S(x,y)\wt{\phi}(x)s_2(x)s_1(y)].$$
Hence,
\begin{align*}
&\lan \phi,MP(s_1,\phi,s_2)\ran=\tau_y[\wt{\phi}(y)\cdot MP(s_1,\phi,s_2)(y)]=-\tau_x\tau_y[\wt{\phi}(x)\wt{\phi}(y)\cdot S(x,y)(s_1(x)s_2(y)-s_2(x)s_1(y))]\\
&=-\lan \wt{\phi}\ot \wt{\phi}, S\cdot (s_1\wedge s_2)\ran.
\end{align*}
\end{proof}

Now we can give a formula for the Poisson bracket on $\PP H^1(C,\xi^{-1})$ in terms of the Szeg\"o kernel and certain auxiliary data which exists
in some examples.

\begin{theorem}\label{Szego-Poisson-thm}
Let $\xi$ be a line bundle of positive degree on an elliptic curve $C$, $D$ a simple effective divisor on $C$, $S$ a left Szeg\"o kernel for $D$.
Suppose there exist another effective
divisor $E$ on $C$ and linear operators 
$$A,B:H^0(C,\xi)\to H^0(C,\xi(D+E))$$ 
such that for any $s_1,s_2\in H^0(C,\xi)$ one has
$$S\cdot (s_1\we s_2)+s_1\ot A(s_2)-s_2\ot A(s_1)+B(s_2)\ot s_1-B(s_1)\ot s_2\in H^0(C,\xi)\ot H^0(C,\xi).$$
Then for nonzero $\phi\in H^1(C,\xi^{-1})$ and $s_1,s_2\in \lan\phi\ran^\perp$, one has
$$\Pi_{\lan\phi\ran}(s_1\we s_2\ran)=\pm\lan \phi\ot \phi, S\cdot (s_1\we s_2)+s_1\ot A(s_2)-s_2\ot A(s_1)+B(s_2)\ot s_1-B(s_1)\ot s_2\ran.$$
\end{theorem}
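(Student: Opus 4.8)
The plan is to combine Lemma \ref{Poisson-Massey-lem} (which expresses $\Pi_{\langle\phi\rangle}(s_1\wedge s_2)$ as $\pm\langle\phi,MP(s_1,\phi,s_2)\rangle$) with Proposition \ref{Massey-Sz-prop} (which computes that Massey product via multiplication by the left Szeg\"o kernel $S$), and then to show that the ``correction terms'' built from $A$ and $B$ contribute nothing when paired with $\phi\otimes\phi$. Concretely, by Lemma \ref{Poisson-Massey-lem} we have
$$\Pi_{\langle\phi\rangle}(s_1\wedge s_2)=\pm\langle\phi,MP(s_1,\phi,s_2)\rangle,$$
and by Proposition \ref{Massey-Sz-prop} this equals $\pm\langle\widetilde\phi\otimes\widetilde\phi,\mu_S(s_1\wedge s_2)\rangle$, where $\widetilde\phi\in H^1(C,\xi^{-1}(-D))$ is a lift of $\phi$ and $\mu_S(s_1\wedge s_2)=S\cdot(s_1\wedge s_2)\in H^0(C,\xi(D))\otimes H^0(C,\xi(D))$, the pairing being Serre duality between $H^1(C,\xi^{-1}(-D))$ and $H^0(C,\xi(D))$.

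The key observation is that the hypothesis says the element
$$\Theta:=S\cdot(s_1\wedge s_2)+s_1\otimes A(s_2)-s_2\otimes A(s_1)+B(s_2)\otimes s_1-B(s_1)\otimes s_2$$
lies in the subspace $H^0(C,\xi)\otimes H^0(C,\xi)\subset H^0(C,\xi(D+E))\otimes H^0(C,\xi(D+E))$. Since $s_1,s_2\in\langle\phi\rangle^{\perp}=\ker\big(H^0(C,\xi)\xrightarrow{\phi}H^1(C,\mathcal{O})\big)$, pairing $\phi$ (rather than a lift $\widetilde\phi$) against an element of $H^0(C,\xi)$ under Serre duality $H^1(C,\xi^{-1})\times H^0(C,\xi)\to k$ gives zero; more precisely, I would check that $\langle\phi\otimes\phi,-\rangle$ vanishes on $s_i\otimes H^0(C,\xi)$ and on $H^0(C,\xi)\otimes s_i$, hence on the four correction tensors individually, and therefore on the whole of $\Theta$ — but $\Theta\in H^0(C,\xi)\otimes H^0(C,\xi)$, so the pairing $\langle\phi\otimes\phi,\Theta\rangle$ is literally defined (no lift needed) and equals zero. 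Consequently
$$\langle\widetilde\phi\otimes\widetilde\phi,S\cdot(s_1\wedge s_2)\rangle=\langle\phi\otimes\phi,S\cdot(s_1\wedge s_2)+s_1\otimes A(s_2)-s_2\otimes A(s_1)+B(s_2)\otimes s_1-B(s_1)\otimes s_2\rangle,$$
which is exactly the asserted formula once we invoke Lemma \ref{Poisson-Massey-lem} and Proposition \ref{Massey-Sz-prop}.

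The step requiring the most care is the compatibility of pairings: I must verify that the Serre-duality pairing $\langle\widetilde\phi\otimes\widetilde\phi,-\rangle$ on $H^0(C,\xi(D))^{\otimes 2}$, the pairing $\langle\phi\otimes\phi,-\rangle$ on $H^0(C,\xi)^{\otimes 2}$, and (implicitly) an extension to $H^0(C,\xi(D+E))^{\otimes 2}$ via a lift of $\phi$ to $H^1(C,\xi^{-1}(-D-E))$ are all mutually consistent on the relevant subspaces, so that replacing $\widetilde\phi$ by $\phi$ and passing between the three ambient spaces is legitimate. This is a diagram-chase with the long exact sequences coming from $0\to\xi^{-1}(-D-E)\to\xi^{-1}(-D)\to\xi^{-1}(-D)|_E\to 0$ and the residue description of Serre duality; the point is that any two lifts of $\phi$ differ by a class killed by the pairing with sections regular along the extra divisor, precisely because $s_1,s_2\in\langle\phi\rangle^\perp$. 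I would also note the sign is inherited, unresolved, from Lemma \ref{Poisson-Massey-lem} and Proposition \ref{Massey-Sz-prop}, consistent with the Remark following Lemma \ref{Poisson-Massey-lem}.
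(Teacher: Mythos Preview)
Your approach is exactly the paper's: invoke Lemma \ref{Poisson-Massey-lem} and Proposition \ref{Massey-Sz-prop}, observe that the correction tensors each have $s_1$ or $s_2$ in one slot and hence pair to zero against $\wt{\phi}\otimes\wt{\phi}$ (since $\langle\wt{\phi},s_i\rangle=\langle\phi,s_i\rangle=0$), and then replace $\wt{\phi}$ by $\phi$ once the argument lies in $H^0(C,\xi)^{\otimes 2}$.

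There is, however, a slip in your write-up that you should fix. You write that $\langle\phi\otimes\phi,-\rangle$ vanishes ``on the four correction tensors individually, and therefore on the whole of $\Theta$ \ldots\ and equals zero.'' This is false: $\Theta$ contains $S\cdot(s_1\wedge s_2)$, which has no $s_i$ factor, so $\langle\phi\otimes\phi,\Theta\rangle$ is \emph{not} zero---it is the Poisson bracket value you are computing. What you want is that the pairing vanishes on the four correction tensors, hence $\langle\wt{\phi}\otimes\wt{\phi},S\cdot(s_1\wedge s_2)\rangle=\langle\wt{\phi}\otimes\wt{\phi},\Theta\rangle$, and then since $\Theta\in H^0(C,\xi)^{\otimes 2}$ you may drop the tilde. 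Your ``Consequently'' line is the right conclusion; the sentence before it just needs to be rewritten. Also note (as the paper does and as you acknowledge in your last paragraph) that to pair against the individual correction tensors, which live in $H^0(C,\xi)\otimes H^0(C,\xi(D+E))$ or its flip, you should from the outset take $\wt{\phi}$ to be a lift to $H^1(C,\xi^{-1}(-D-E))$ rather than only to $H^1(C,\xi^{-1}(-D))$.
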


\begin{proof} By Lemma \ref{Poisson-Massey-lem} and Proposition \ref{Massey-Sz-prop}, we have
$$\Pi_{\lan\phi\ran}(s_1\we s_2\ran)=\pm\lan \phi, MP(s_1,\phi,s_2)\ran=\pm\lan \wt{\phi}\ot\wt{\phi}, S\cdot (s_1\wedge s_2)\ran,$$
where $\wt{\phi}$ is a lifting of $\phi$ to $H^1(C,\xi^{-1}(-D-E))$ (note that in the right-hand side we can replace $\wt{\phi}$ by the induced lifting of $\phi$ to $H^1(C,\xi^{-1}(-D))$).
Now we observe that
$$\lan \wt{\phi}, s_i\ran=\lan \phi, s_i\ran=0$$
for $i=1,2$, since $s_i\in \lan\phi\ran$.
Hence,
$$\lan \wt{\phi}\ot\wt{\phi}, S\cdot (s_1\wedge s_2)\ran=\lan \wt{\phi}\ot\wt{\phi}, S\cdot (s_1\we s_2)+s_1\ot A(s_2)-s_2\ot A(s_1)+B(s_2)\ot s_1-B(s_1)\ot s_2\ran.$$
Finally, we can replace $\wt{\phi}\ot\wt{\phi}$ with $\phi\ot \phi$ since the second argument of the pairing lies in $H^0(C,\xi)\ot H^0(C,\xi)$.
\end{proof}

\subsection{Odesskii-Wolf compatible brackets}\label{OW-sec}

Here we are going to prove that $9$ compatible Poisson brackets on projective spaces constructed in Example \ref{OW-ex} coincide with those constructed
by Odesskii-Wolf in \cite{OW}. Note that for this it is enough to check the equality between two brackets for a generic value of parameters in the linear family (resp., a generic
anticanonical divisor in the Hirzebruch surface).

\subsubsection{Even case}

Let us first consider the case of brackets containing $q_{2k,1}$.
This corresponds to considering anticanonical divisors in $X=\PP(\OO\oplus \OO(2))$. 
Let $p:X\to \PP^1$ be the natural projection
We denote by $(t_0:t_1)$ the homogeneous coordinates on $\PP^1$ and by $(x_0:x_1)$ the fiberwise homogeneous coordinates on $X$,
where $x_0$ is a section of $\OO_X(1)$ and $x_1$ is a section of $p^*\OO(2)(1)$.
Since $\om_X^{-1}=p^*\OO(4)(2)$, we have
$$H^0(X,\om_X^{-1})=\k\cdot x_1^2\oplus p^*H^0(\PP^1,\OO(2))\cdot x_1x_0\oplus p^*H^0(\PP^1,\OO(4))x_0^2.$$
Thus, a generic anticanonical divisor $C\sub X$ is given by the equation
$$x_1^2=f_2(t_0,t_1)x_1x_0+f_4(t_0,t_1)x_0^2,$$
where $f_2$ is homogeneous of degree $2$ and $f_4$ is homogeneous of degree $4$.
Note that $x_0\neq 0$ on $C$, so it gives a trivialization of $\OO_X(1)|_C$.

Let us denote by $D\sub C$ the divisor $t_0=0$. Then we can use 
$$t:=\frac{t_1}{t_0},  \ \ x:=\frac{x_1}{x_0t_0^2}$$
as affine coordinates on $C-D$ satisfying the equation
$$x^2=Q(t)x+P(t),$$
where $Q(t)$ has degree $\le 2$ and $P$ has degree $\le 4$.
Note that the space $\FF_{ev}$ in \cite[Sec.\ 2.1]{OW} is precisely the space of functions on $C-D$.

We can rewrite the equation of $C-D$ in the form
$$(x-Q(t)/2)^2=P(t)+Q(t)^2/4.$$
Hence, by Lemma \ref{alg-for-Sz-lem}, 
\begin{equation}\label{Sz-Q-formula}
S=\frac{x_1-Q(t_1)/2+x_2-Q(t_2)/2}{t_1-t_2}
\end{equation}
is a Szeg\"o kernel for $D$.

For $k\ge 1$, we consider the line bundle
$$\xi_{2k}:=p^*\OO(k)(1)|_C\simeq p^*\OO(k)\simeq \OO_C(kD),$$
where we use the trivialization of $\OO_X(1)|_C$ given by $x_0$.
The restriction map on spaces of global sections
$$H^0(X,p^*\OO(k)(1))\to H^0(C,\xi_{2k})$$
is an isomorphism, and sends the basis 
$$(t_1^it_0^{k-i}x_0)_{i\le k}, (t_1^jt_0^{k-2-j}x_1)_{j\le k-2}$$
to the basis $(t^i)_{i\le k}, (t^jx)_{j\le k-2}$ of $H^0(C,\xi_{2k})$.
Thus, we can identify this space with the subspace $\FF_{2k}\sub \FF_{ev}$ defined in \cite[Sec.\ 2.1]{OW}.

Recall that Odesskii-Wolf \cite{OW} define a derivation $\cD$ on $\FF_{ev}=\OO(C-D)$ by
$$\cD(t)=2x-Q(t), \ \ \cD(x)=P'(t)+Q'(t)x.$$
Note that the fact that $\cD$ descends to a well defined derivation of $\OO(C-D)$ becomes clear if we rewrite it as
$$\cD=\frac{\pa F}{\pa x}\pa_t - \frac{\pa F}{\pa t}\pa_x,$$
where $F=x^2-Q(t)x-P(t)$ is the defining equation of $C-D$.
Also, it is easy to check that 
$$\cD(H^0(C,\OO(kD)))\sub H^0(C,\OO((k+1)D)).$$

Now the Poisson bracket from \cite{OW} on $\PP H^0(C,\xi_{2k})^*\sim \PP \FF_{2k}^*$ (depending linearly on the coefficients of $Q$ and $P$) can be rewritten as
\begin{equation}\label{OW-even-bracket-formula}
\lan \Pi_{OW,\phi}, s_1\we s_2\ran=\lan \phi\ot \phi, 2k\cdot S\cdot (s_1\we s_2)+s_1\ot \cD(s_2)+\cD(s_2)\ot s_1-s_2\ot \cD(s_1)-\cD(s_1)\ot s_2\ran,
\end{equation}
where $\phi\in H^0(C,\xi_{2k})^*$, $s_1,s_2\in\lan\phi\ran^\perp$, and $S$ is given by \eqref{Sz-Q-formula}. Note that a part of the statement (that is proved in \cite{OW} by
a direct computaton) is that the second argument in the pairing in the right-hand side of \eqref{OW-even-bracket-formula} lies in $H^0(C,\xi_{2k})\ot H^0(C,\xi_{2k})$.
Therefore, using Theorem \ref{Szego-Poisson-thm} (with $E=0$) we see that our construction of compatible brackets agrees with that of \cite{OW} in this case.

\begin{prop}\label{OW-even-prop}
The $9$ compatible Poisson brackets on $\PP \FF_{2k}^*$ given in \cite{OW} are linearly independent and the corresponding
$9$-dimensional subspace of compatible brackets coincides with the one coming from Example \ref{OW-ex} for $n=2$.
\end{prop}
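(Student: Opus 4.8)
The strategy is to combine the explicit identification of our brackets with the Odesskii--Wolf brackets --- carried out in the discussion preceding \eqref{OW-even-bracket-formula} via Theorem \ref{Szego-Poisson-thm} --- with the reconstruction Theorem \ref{reconstr-thm}. Throughout write $X=F_2=\PP(\OO\oplus\OO(2))$, $W:=H^0(X,\om_X^{-1})$, $\cV:=p^*\OO(k)(1)$, and let
$$\kappa\colon W\longrightarrow H^0\big(\PP H^0(X,\cV)^*,{\bigwedge}^2 T\big)$$
be the linear map of Theorem \ref{exc-pair-thm}. We may assume $k\ge 2$; for $k=1$ the space $\PP\FF_2^*$ has dimension $\le 1$ and all brackets vanish. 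First I would observe that the two $9$-dimensional subspaces coincide. A generic anticanonical divisor $C\subset X$ has equation $x_1^2-f_2x_1x_0-f_4x_0^2=0$, and under the coordinate change of Section \ref{OW-sec} the pair $(f_2,f_4)$ corresponds, by homogenization, to the pair $(Q,P)$ of \eqref{OW-even-bracket-formula}; this is an affine isomorphism of two $8$-dimensional affine spaces. By the discussion around \eqref{OW-even-bracket-formula}, $\kappa$ evaluated on the section $x_1^2-f_2x_1x_0-f_4x_0^2$ equals the Odesskii--Wolf bivector $\Pi_{OW}$ for the corresponding $(Q,P)$ whenever $C$ is smooth, hence on a Zariski-dense set of parameters. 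Both sides are affine-linear in $(f_2,f_4)$ --- $\Pi_{OW}$ because $S$ and $\cD$ depend affine-linearly on $(Q,P)$ and enter \eqref{OW-even-bracket-formula} linearly, and $\kappa$ because it is linear --- so they agree identically. Taking linear spans and using that the sections $x_1^2-f_2x_1x_0-f_4x_0^2$ span $W$, the linear span of the Odesskii--Wolf brackets equals $\kappa(W)$, i.e.\ the subspace of Example \ref{OW-ex} for $n=2$. It remains to prove that $\kappa$ is injective.

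For this I would argue as in the proof of Proposition \ref{Fibonacci-prop}. The line bundle $\cV$ is $\Aut(F_2)$-equivariant (it is a line bundle, and $\Aut(F_2)$ acts trivially on $\mathrm{Pic}$), and $\om_X^{-1}$ is tautologically $\Aut(F_2)$-equivariant; since the construction of $\kappa$ is functorial, $\kappa$ is $G$-equivariant for $G:=\Aut(F_2)$, so $\ker\kappa$ is a $G$-subrepresentation of $W$. In contrast with the case of $\PP^2$, however, $W=H^0(F_2,\om_{F_2}^{-1})$ is \emph{not} $G$-irreducible. With respect to a Levi subgroup $\GG_m\times\mathrm{PGL}_2\subset G$ --- with $\GG_m$ rescaling $x_1$ relative to $x_0$ and $\mathrm{PGL}_2$ acting on the base $\PP^1$ --- it decomposes as
$$W=H^0(\PP^1,\OO(4))\,x_0^2\ \oplus\ H^0(\PP^1,\OO(2))\,x_0x_1\ \oplus\ \k\,x_1^2,$$
the three summands being the $\GG_m$-weight spaces of weights $0,1,2$ and, as $\mathrm{PGL}_2$-modules, the pairwise non-isomorphic irreducibles $\mathrm{Sym}^4,\mathrm{Sym}^2,\mathrm{Sym}^0$. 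The unipotent radical of $G$ acts through $x_1\mapsto x_1+q\,x_0$ ($q\in H^0(\PP^1,\OO(2))$), so $x_1^2\mapsto x_1^2+2q\,x_0x_1+q^2x_0^2$ and $x_0x_1\mapsto x_0x_1+q\,x_0^2$; since products $q_1q_2$ of quadratic forms span $H^0(\PP^1,\OO(4))$, any $G$-submodule is a sum of weight spaces that is closed under passing from a weight space to all lower ones, so the $G$-submodules of $W$ form the chain $0\subset W_0\subset W_0\oplus H^0(\PP^1,\OO(2))x_0x_1\subset W$ with $W_0:=H^0(\PP^1,\OO(4))x_0^2$; in particular $W_0$ is the unique minimal nonzero $G$-submodule.

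Now I would rule out $\ker\kappa\ne 0$. If $\ker\kappa=W$ then $\kappa\equiv0$; but for a smooth anticanonical divisor $C$ the bivector $\kappa([C])$ is the Feigin--Odesskii bracket $q_{2k,1}(\cV|_C)$, which is nonzero since $2k>1+1$ (Proposition \ref{Poisson-degeneration-prop}). Suppose instead that $W_0\subseteq\ker\kappa$. Pick squarefree quartics $P,P'$ whose roots have different cross-ratios, and let $C,C'\subset X$ be the (smooth) anticanonical divisors with equations $x_1^2=f_4x_0^2$ and $x_1^2=f_4'x_0^2$, where $f_4,f_4'$ are the homogenizations of $P,P'$; in the affine chart these are $x^2=P(t)$ and $x^2=P'(t)$, so $C$ and $C'$ are non-isomorphic elliptic curves. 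Since $x_1^2-f_4x_0^2$ and $x_1^2-f_4'x_0^2$ differ by an element of $W_0\subseteq\ker\kappa$, we get $q_{2k,1}(\cV|_C)=\kappa([C])=\kappa([C'])=q_{2k,1}(\cV|_{C'})$, contradicting Theorem \ref{reconstr-thm} (applicable since $\rk(\cV|_C)=1<2k-1$). Hence $\ker\kappa$ does not contain $W_0$, so it contains no nonzero $G$-submodule, so $\ker\kappa=0$; thus $\kappa$ is injective, which completes the proof.

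The main obstacle is exactly that $H^0(F_2,\om_{F_2}^{-1})$ is reducible --- indeed uniserial --- as an $\Aut(F_2)$-module, so the representation-theoretic shortcut that handled $\PP^2$ in Proposition \ref{Fibonacci-prop} does not suffice on its own; one genuinely needs Theorem \ref{reconstr-thm} (equivalently, non-isotriviality of anticanonical families on $F_2$) to eliminate the proper nonzero submodules. The remaining computations (the submodule-lattice check, and matching the explicit identity coming from \eqref{OW-even-bracket-formula} to the affine-linear statement used in the first step) are routine.
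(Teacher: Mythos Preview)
Your argument is correct and follows the same overall architecture as the paper's proof: both use $\Aut(F_2)$-equivariance of $\kappa$, identify the chain of $G$-submodules of $W$, and reduce injectivity to showing $\kappa$ is nonzero on the minimal submodule $W_0=H^0(\PP^1,\OO(4))x_0^2$. The genuine difference is in that last step. The paper settles it by a direct computation: it evaluates the Odesskii--Wolf bracket on the one-parameter family $x_1^2=a_0 t_0^4 x_0^2$ using the explicit formulas of \cite{OW} and checks that the $a_0$-linear part (namely $\kappa(t_0^4x_0^2)$) is nonzero. You instead invoke Theorem \ref{reconstr-thm}: two smooth anticanonical divisors of the form $x_1^2=f_4x_0^2$ with non-isomorphic underlying elliptic curves would be forced to have the \emph{same} Feigin--Odesskii bracket if $W_0\subseteq\ker\kappa$, contradicting reconstruction. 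Your route is cleaner in that it avoids any explicit coordinate computation with the \cite{OW} formulas, at the cost of appealing to the heavier machinery of Theorem \ref{reconstr-thm}; the paper's route is more self-contained at this point but requires the short calculation. One small imprecision: ``different cross-ratios'' of the four roots does not by itself guarantee non-isomorphic elliptic curves (the $j$-invariant is a degree-$6$ function of the cross-ratio); you should say ``different $j$-invariants'' or simply ``non-isomorphic double covers''.
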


\begin{proof} We checked the compatibility between two constructions. It remain to prove linear independence.
Let us consider the group 
$$G=\GL_2\rtimes \Aut(\OO_{\PP^1}\oplus\OO_{\PP^1}(2)).$$
It acts on the Hirzebruch surface $X$ and the relevant line
bundles are $G$-equivariant, so the kernel of the linear map
$$H^0(X,\om_X^{-1})\to H^0(\PP \FF_{2k}^*, {\bigwedge}^2T)$$
is $G$-invariant. But it is easy to see that the only nonzero proper $G$-subrepresentations of 
$H^0(X,\om_X^{-1})$ are
$$p^*H^0(\PP^1,\OO(2))\cdot x_1x_0\oplus p^*H^0(\PP^1,\OO(4))x_0^2 \ \text{ and } p^*H^0(\PP^1,\OO(4))x_0^2$$
(for this in addition to $\GL_2$ we use automorphisms $x_0\mapsto x_0$, $x_1\mapsto Q(t)x_0$).
Thus, it is enough to check that our map is nonzero on $p^*H^0(\PP^1,\OO(4))x_0^2$.
Therefore, it suffices to check that the image of $\lan x_1^2, t_0^4x_0^2\ran$ is $2$-dimensional.

For this we apply formulas from \cite[Sec.\ 2.2]{OW} to compute the bracket $\{\cdot,\cdot\}_{a_0}$ associated with the anticanonical divisor $C_{a_0}$ given by
$$x_1^2=a_0t_0^4x_0^2$$
(which corresponds in the notation of \cite{OW} to $g^2=a_0$)
and to check that the constant and linear terms in $a_0$ are linearly independent.

Let us consider the linear forms on $\PP\FF_{2k}^*$ (which we view as elements of $H^0(C,\xi_{2k})$, 
$$\ell_1=1, \ \ \ell_2=t, \ \ \ell_3=x.$$
Then using  formulas from \cite[Sec.\ 2.2]{OW} we get
$$\{\frac{\ell_1}{\ell_3},\frac{\ell_2}{\ell_3}\}_{a_0}=-2k\frac{\ell_1}{\ell}_3+a_0\cdot 2k\frac{\ell_1^3}{\ell_3^3}.$$
Hence, we get the required independence.
\end{proof}

\subsubsection{Odd case}

Now we consider the situation of Example \ref{OW-ex} for anticanonical divisors in $X=\PP(\OO\oplus \OO(1))$. 
This time we have fiberwise homogeneous coordinates $x_0\in \OO_X(1)$ and $x_1\in p^*\OO(1)(1)$.
We have $\om_X^{-1}=p^*\OO(3)(2)$, so
$$H^0(X,\om_X^{-1})=p^*H^0(\PP^1,\OO(1))\cdot x_1^2\oplus p^*H^0(\PP^1,\OO(2))\cdot x_1x_0\oplus p^*H^0(\PP^1,\OO(3))x_0^2.$$
Thus, a generic anticanonical divisor $C$ is given by the equation
$$(t_1+ct_0)x_1^2=f_2(t_0,t_1)x_1x_0+f_3(t_0,t_1)x_0^2,$$
where $\deg(f_2)=2$, $\deg(f_3)=3$.
The open affine subset $U\sub C$ given by $t_0x_0\neq 0$ has the algebra of functions generated by $t=t_1/t_0$ and $x=x_1/(x_0t_0)$ subject to the relation
$$(t+c)x^2=Q(t)x+P(t),$$
where $\deg Q\leq 2$ and $\deg P\le 3$.
This algebra is precisely $\FF_{od}$ from \cite[Sec.\ 2.1]{OW}.
 
As before we consider the line bundle $p^*\OO(k)(1)$ on $X$ and its restriction to $C$,
$$\xi_{2k+1}:=p^*\OO(k)(1)|_C.$$
The section $t_0^kx_0$ trivializes this line bundle over $U$, so that the basis of global sections of $p^*\OO(k)(1)$
restricts to the functions 
\begin{equation}\label{odd-case-sections-line-bundle-eq}
(t^i)_{i\le k}, (t^jx)_{j\le k-1}.
\end{equation}
Thus, we have an identification of $H^0(C,\xi_{2k+1})$ with the space $\FF_{2k+1}\sub \FF_{od}$ from \cite{OW}.

As in the even case, Odesskii-Wolf define a derivation $\cD$ on $\FF_{od}=\OO(U)$ by
$$\cD(t)=2(t+c)x-Q(t), \ \ \cD(x)=P'(t)+Q'(t)x-x^2.$$
Further, they define the quadratic Poisson bracket on $\FF_{2k+1}$ (depending linearly on the coefficients of $Q$ and $P$) which induces a Poisson bracket on 
$\PP\FF_{2k+1}^*=\PP H^0(C,\xi_{2k+1})^*$ given by
$$\lan \Pi_{OW,\phi}, s_1\we s_2\ran=\lan \phi\ot \phi, (2k+1)\cdot S\cdot (s_1\we s_2)+s_1\ot \cD(s_2)+\cD(s_2)\ot s_1-s_2\ot \cD(s_1)-\cD(s_1)\ot s_2\ran,$$
where $S$ is given by
$$S=\frac{(t_1+c)x_1-Q(t_1)/2+(t_1+c)x_2-Q(t_2)/2}{t_1-t_2}.$$

To understand this formula let us consider the divisor $D\sub C$ given by $t_0=0$. Then $U\sub C-D$ and the complement consists of one point $q$ where $t_1+ct_0=0$
and $x_0=0$. It is easy to see that $C-D$ is affine and the algebra of functions $\OO(C-D)$ is the subring of $\OO(U)$ generated by $t$ and $z:=(t+c)x$.
Thus, $C-D$ is the plane curve given by the equation
$$z^2=Q(t)z+(t+c)P(t).$$
Now Lemma \ref{alg-for-Sz-lem} shows that $S$ is a Szeg\"o kernel for the divisor $D$ on $C$.
 
On the other hand, 
%the section $t_0^{k-1}x
since $x\in \OO(D+q)$ and has a pole of order $1$ at $q$, looking at the basis \eqref{odd-case-sections-line-bundle-eq} we see that
$H^0(C,\xi_{2k+1})=\FF_{2k+1}$ gets identified with the subspace $H^0(C,\OO(kD+q))\sub \OO(U)$.
It is easy to check that 
$$\cD(H^0(C,\OO(kD+q)))\sub H^0(C,\OO((k+1)D+2q)).$$ 
Thus, applying Theorem \ref{Szego-Poisson-thm} (with $E=q$) we again deduce the agreement of our construction of compatible Poisson brackets with that
of \cite{OW}.

\begin{prop}\label{OW-odd-prop}
The $9$ compatible Poisson brackets on $\PP \FF_{2k+1}^*$ given in \cite{OW} are linearly independent and the corresponding
$9$-dimensional subspace of compatible brackets coincides with the one coming from Example \ref{OW-ex} for $n=1$.
\end{prop}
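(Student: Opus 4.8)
The plan is to follow the proof of Proposition \ref{OW-even-prop} essentially verbatim, with the Hirzebruch surface replaced by $X=F_1=\PP(\OO\oplus\OO(1))$. The agreement of the two constructions of brackets on $\PP\FF_{2k+1}^*$ has already been established above (via Theorem \ref{Szego-Poisson-thm} with $E=q$ and the derivation $\cD$ satisfying $\cD(t)=2(t+c)x-Q(t)$, $\cD(x)=P'(t)+Q'(t)x-x^2$). Hence the only thing left to prove is that the $9$-dimensional family is genuinely $9$-dimensional, i.e.\ that the linear map
$$\kappa\colon H^0(X,\om_X^{-1})\longrightarrow H^0(\PP\FF_{2k+1}^*,{\bigwedge}^2 T)$$
of Example \ref{OW-ex} (for $n=1$) is injective.

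To prove injectivity I would use equivariance. Let $G=\GL_2\rtimes\Aut(\OO_{\PP^1}\oplus\OO_{\PP^1}(1))$; it acts on $X$, with $\GL_2$ acting on the base $\PP^1$ and the unipotent part of $\Aut$ acting by $x_1\mapsto x_1+\ell(t)x_0$ for $\ell\in H^0(\PP^1,\OO(1))$, and it lifts to all the line bundles entering the construction, so $\ker(\kappa)$ is a $G$-subrepresentation of
$$H^0(X,\om_X^{-1})=p^*H^0(\PP^1,\OO(1))\cdot x_1^2\ \oplus\ p^*H^0(\PP^1,\OO(2))\cdot x_1x_0\ \oplus\ p^*H^0(\PP^1,\OO(3))\cdot x_0^2.$$
Each $p^*H^0(\PP^1,\OO(m))$ is an irreducible $\GL_2$-representation, of dimension $m+1$ for $m=1,2,3$; and the substitution $x_1\mapsto x_1+\ell x_0$ makes both of the successive extensions in the filtration by divisibility by $x_0$ non-split. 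A short computation (the exact analogue of the one in the proof of Proposition \ref{OW-even-prop}) then shows that the lattice of $G$-subrepresentations is the two-step flag
$$0\ \subset\ W_2:=p^*H^0(\PP^1,\OO(3))\cdot x_0^2\ \subset\ W_1:=W_2\oplus p^*H^0(\PP^1,\OO(2))\cdot x_1x_0\ \subset\ H^0(X,\om_X^{-1}).$$
In particular every nonzero $G$-subrepresentation contains $W_2$, so it suffices to show that $\kappa$ does not vanish on $W_2$; by irreducibility of $W_2$ it is enough to exhibit one $w\in W_2$ with $\kappa(w)\neq0$.

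To produce such a $w$ I would imitate the $C_{a_0}$-computation of the even case. Consider the one-parameter family of anticanonical divisors $C_{a_0}\subset X$ given by $t_1x_1^2=a_0t_0^3x_0^2$; in the affine coordinates of Section \ref{OW-sec} this reads $tx^2=a_0$, i.e.\ $c=Q=0$ and $P=a_0$ constant, so the corresponding element $w_{a_0}=t_1x_1^2-a_0t_0^3x_0^2$ of $H^0(X,\om_X^{-1})$ has $W_2$-component $-a_0t_0^3x_0^2$ and differs from $t_1x_1^2$ by an element of $W_2$. Using the explicit odd-case formulas of \cite[Sec.\ 2.2]{OW} — now with $\cD(t)=2tx$, $\cD(x)=-x^2$ on $\FF_{od}$ and $S=(t_1x_1+t_2x_2)/(t_1-t_2)$ — one computes the bracket $\{\cdot,\cdot\}_{a_0}=\kappa(w_{a_0})$ on the linear coordinate functions $\ell_1=1$, $\ell_2=t$, $\ell_3=x$ (for instance $\{\ell_1/\ell_3,\ell_2/\ell_3\}_{a_0}$) and checks that the result is an affine function of $a_0$ whose constant term and $a_0$-coefficient are linearly independent. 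This shows that $\kappa$ is injective on the plane $\langle t_1x_1^2,\,t_0^3x_0^2\rangle$; in particular $\kappa(t_0^3x_0^2)\neq0$, so $W_2\not\subset\ker(\kappa)$ and therefore $\ker(\kappa)=0$. This yields the linear independence of the $9$ Odesskii--Wolf brackets on $\PP\FF_{2k+1}^*$, and combined with the agreement of the two constructions it identifies their $9$-dimensional span with the one coming from Example \ref{OW-ex} for $n=1$.

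The genuinely new ingredient — and the expected main, though routine, obstacle — is the explicit bracket computation with the $\FF_{od}$-formulas of \cite{OW}: one must verify that the constant and $a_0$-linear parts of the computed bivector are not proportional. Everything else is a direct transcription of the even case; the one point deserving care is checking that the unipotent part of $\Aut(\OO_{\PP^1}\oplus\OO_{\PP^1}(1))$ really does make the two extensions in the above filtration non-split, so that the $G$-subrepresentation lattice is exactly the two-step flag claimed, but this is elementary.
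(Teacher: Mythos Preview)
Your proposal is correct and follows essentially the same argument as the paper: reduce to injectivity of $\kappa$, use $G=\GL_2\rtimes\Aut(\OO_{\PP^1}\oplus\OO_{\PP^1}(1))$-equivariance to reduce to showing $\kappa$ is nonzero on $p^*H^0(\PP^1,\OO(3))\cdot x_0^2$, and verify this by computing $\{\ell_1/\ell_3,\ell_2/\ell_3\}_{a_0}$ for the family $tx^2=a_0$ and checking that the constant and $a_0$-linear parts are independent. One tiny slip: the odd-case formulas in \cite{OW} are in Section~2.3, not 2.2.
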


\begin{proof} It remains to check that the map
$$H^0(X,\om_X^{-1})\to H^0(\PP \FF_{2k+1}^*, {\bigwedge}^2T)$$
is injective.
As before, we use the fact that the kernel is invariant under $\GL_2\rtimes \Aut(\OO_{\PP^1}\oplus\OO_{\PP^1}(1))$,
so it is enough to check that the image of $p^*H^0(\PP^1,\OO(3))x_0^2$ is nonzero.
Hence, it suffices to consider the bracket $\{\cdot,\cdot\}_{a_0}$ corresponding to the anticanonical divisor
$$tx^2=a_0$$
and check that the constant and linear terms in $a_0$ are linearly independent.
 
Let us consider the linear forms on $\PP\FF_{2k+1}^*$ (which we view as elements of $H^0(C,\xi_{2k+1})$, 
$$\ell_1=1, \ \ \ell_2=t, \ \ \ell_3=x.$$
Then using  formulas from \cite[Sec.\ 2.3]{OW} we get
$$\{\frac{\ell_1}{\ell_3},\frac{\ell_2}{\ell_3}\}_{a_0}=
-2\frac{\ell_1}{\ell_3}-(2k-1)\frac{\ell_2}{\ell_3}+a_0(2k+1)\frac{\ell_1^3}{\ell_3^3},$$
so we get the required linear independence.
\end{proof}


\begin{thebibliography}{XXXX}
%\bibitem{ATV88} Artin, Tate Van den Bergh
%\bibitem{Fah} R.~Fahloui, Stabilit\'e du fibr\'e tangent des surfaces de del Pezzo,
%Math. Ann. 283 (1989), no. 1, 171--176.
\bibitem{FP} R.~Fisette, A.~Polishchuk, {\it $A_{\infty}$-algebras associated with curves and rational functions on ${\cal M}_{g,g}$},
  Compositio Math. 150 (2014), 621--667.
  \bibitem{Ca14} Calaque, Damien. {\it Lagrangian structures on mapping stacks and semi-classical TFTs}, Stacks and categories in geometry, topology, and algebra 643 (2015): 1-23.
\bibitem{Cohen} J.~M.~Cohen, {\it The decomposition of stable homotopy}, Annals Math.
87 (1968), 305--320.
\bibitem{CPTVV} D.~Calaque, T.~Pantev, B.~Toen, M.~Vaqui\'e, G.~Vezzosi, \emph{Shifted Poisson structures and deformation quantization}, Journal of Topology 10.2 (2017): 483-584.
\bibitem{Fay} J.~D.~Fay, {\it The non-abelian Szeg\"o kernel and theta-divisor}, in {\it Curves, Jacobians, and Abelian Varieties}, AMS, Providence, RI, 1992.
\bibitem{FO87} B.~L.~Feigin, A.~V.~Odesskii, {\it Sklyanin's elliptic algebras}, 
Funct. Anal. Appl. 23 (1989), no. 3, 207--214.
\bibitem{FO95} B.~L.~Feigin, A.~V.~Odesskii, {\it Vector bundles on an elliptic curve and Sklyanin algebras}, in 
{\it Topics in quantum groups and finite-type invariants}, 65--84, Amer. Math. Soc., Providence, RI, 1998.
\bibitem{HAGII} B. ~Toen, G.~ Vezzosi, \emph{Homotopical algebraic geometry II: Geometric stacks and
applications}, Mem.~ Amer.~ Math.~ Soc. 193 (2008), no. 902.
\bibitem{HP1} Z.~Hua, A.~Polishchuk, {\it Shifted Poisson structures and moduli spaces of complexes}, Adv. Math. 338 (2018), 991--1037.
\bibitem{HP2} Z.~Hua, A.~Polishchuk, {\it Shifted Poisson geometry and meromorphic matrix algebras over an elliptic curve}, 
Selecta Math. 25 (2019), no. 3, Paper No. 42.
\bibitem{HP-Bos} Z.~Hua, A.~Polishchuk, {\it Bosonization of Feigin-Odesskii Poisson varieties},  arxiv:2306.14719.
\bibitem{KO} S.~Kuleshov, D.~Orlov, {\it Exceptional sheaves on Del Pezzo surfaces}, Russian Acad. Sci. Izv. Math. 44 (1995), no. 3, 479--513.
\bibitem{Magri} F.~Magri, {\it A simple model of the integrable Hamiltonian equation} Journal of Mathematical Physics 19, no. 5 (1978): 1156-1162.
\bibitem{Odesskii} A.~Odesskii, {\it Bihamiltonian elliptic structures}, Mosc. Math. J. 4 (2004), no. 4, 941--946, 982.
\bibitem{OW} A.~Odesskii, T.~Wolf, {\it Compatible quadratic Poisson brackets related to a family of elliptic curves}, arXiv:1204.1299
\bibitem{MS2} Melani, Valerio, and Pavel Safronov. \emph{Derived coisotropic structures II: stacks and quantization.} Selecta Mathematica 24, no. 4 (2018): 3119-3173.
\bibitem{PPthesis} P.~Pandit, {\it Moduli Problems in Derived Noncommutative Geometry}, Ph.D. Thesis, University of Pennsylvania.
\bibitem{PTVV} T.~Pantev, B.~To\"en, M.~Vaqui\'e, G.~Vezzosi, \emph{Shifted Symplectic Structures}, Publications Math. IHES 117 no. 1 (2013), 271--328.
\bibitem{Pol-Poisson} A.~Polishchuk, {\it Poisson structures and birational morphisms associated with bundles on elliptic curves}, IMRN 13 (1998), 683--703.
\bibitem{Polcyc} A.~Polishchuk, \emph{$A_\infty$-structures associated with pairs of $1$-spherical objects and noncommutative orvers over curves},
Trans. AMS 373 (2020), 6029--6093.
\bibitem{Toenbasechange} B.~To\"en, \emph{Proper local complete intersection morphisms preserve perfect complexes},
arXiv:1210.2827 
\bibitem{TV07} B.~To\"en, M.~Vaqui\'e, \emph{Moduli of objects in dg-categories}, Ann. ~Sci.~ ENS~ 40 (2007), 387--444.
\end{thebibliography}
\end{document}